\newtheorem{theorem}{Theorem}
\newtheorem{lemma}[theorem]{Lemma}
\newtheorem{proposition}[theorem]{Proposition}
\newtheorem{corollary}[theorem]{Corollary}
{\theorembodyfont{\rmfamily}%
  \newtheorem{example}[theorem]{Example}
   }
\newenvironment{proof}{\noindent\textit{Proof.}}
{\QED\vskip\theorempostskipamount} 
\newenvironment{proofof}[1]{\noindent\textit{Proof
    \protect{#1}.}}
                       {\QED\vskip\theorempostskipamount}
\def\petitcarre{\vrule height4pt width 4pt depth0pt}
\def\QED{\relax\ifmmode\eqno{\hbox{\petitcarre}}\else{%
  \unskip\nobreak\hfil\penalty50\hskip2em\hbox{}\nobreak\hfil
  \petitcarre
  \parfillskip=0pt \finalhyphendemerits=0\par\smallskip}
  \fi}
\newcommand\alp{\mathop{\rm alph}\nolimits}
\newcommand\A{\mathcal{A}}
\newcommand\RR{\mathcal{R}}
\newcommand{\N}{\mathbb{N}}
\newcommand{\Z}{\mathbb{Z}}
\newcommand{\R}{\mathbb{R}}
\let\edge\xrightarrow
\def\un(#1){\underline{#1}\,}
\DeclareMathOperator{\Card}{Card}
\DeclareMathOperator{\Sep}{Sep}
\definecolor{ivoire}{rgb}{0.99,0.99,0.8}
\DeclareMathOperator{\Fac}{Fac}
\def\bh{\mathbf{h}}
\def\id{{\rm id}}
\definecolor{light-gray}{gray}{0.7}
\newcounter{hours}\newcounter{minutes}
\newcommand\computetime{\setcounter{hours}{\time/60}%
  \setcounter{minutes}{\time-\value{hours}*60}%
  \thehours\,h\,\theminutes}
\newcommand\dateandtime{\today\quad\computetime}
\numberwithin{theorem}{section}
\numberwithin{equation}{section}
\numberwithin{figure}{section}
\numberwithin{table}{section}
\title{Maximal bifix decoding}
\author{Val\'erie Berth\'e$^1$, Clelia De Felice$^2$, 
Francesco Dolce$^3$, Julien Leroy$^4$,\\
 Dominique Perrin$^3$,
Christophe  Reutenauer$^5$,
Giuseppina Rindone$^3$\\\\
$^1$CNRS, Universit\'e Paris 7,
$^2$Universit\`a degli Studi di Salerno,\\
$^3$Universit\'e Paris Est, LIGM,
$^4$Universit\'e du Luxembourg,\\
 $^5$Universit\'e du Qu\'ebec \`a Montr\'eal}
\date{\dateandtime}
\begin{document}
\makeatletter
\def\@listI{%
  \leftmargin\leftmargini
  \setlength{\parsep}{0pt plus 1pt minus 1pt}
  \setlength{\topsep}{2pt plus 1pt minus 1pt}
  \setlength{\itemsep}{0pt}
}
\let\@listi\@listI
\@listi
\def\@listii {%
  \leftmargin\leftmarginii
  \labelwidth\leftmarginii
  \advance\labelwidth-\labelsep
  \setlength{\topsep}{0pt plus 1pt minus 1pt}
}
\def\@listiii{%
  \leftmargin\leftmarginiii
  \labelwidth\leftmarginiii
  \advance\labelwidth-\labelsep
  \setlength{\topsep}{0pt plus 1pt minus 1pt}
  \setlength{\parsep}{0pt} 
  \setlength{\partopsep}{1pt plus 0pt minus 1pt}
}
\makeatother

\maketitle

\begin{abstract}
We consider a class of sets of words which is
 a natural common generalization of Sturmian
sets and of interval exchange sets. This class
of sets consists of the uniformly recurrent tree sets,
where the tree sets are defined by a condition on
the possible extensions of bispecial factors.
We prove that this
class  is closed under maximal bifix decoding.
The proof uses the fact that the class is also closed
 under decoding with respect to return words.
\end{abstract}
\tableofcontents
\section{Introduction}
This paper studies the properties of
 a common generalization of Sturmian sets and
regular interval exchange
sets.
We first give some elements on the background of these two
families of sets.  

Sturmian words are infinite words over a binary alphabet that have
exactly $n+1$ factors of length $n$ for each $n\ge 0$.  Their origin
can be traced back to the astronomer J. Bernoulli III.  Their first
in-depth study is by Morse and Hedlund~\cite{MorseHedlund1940}. Many
combinatorial properties were described in the paper by Coven and
Hedlund~\cite{CovenHedlund1973}.  

 We understand here by
Sturmian 
words the generalization to arbitrary alphabets,
often called strict episturmian words or Arnoux-Rauzy words
(see  the survey~\cite{GlenJustin2009}), of the classical Sturmian
words
on two letters. A Sturmian set is the set of factors of one Sturmian word.
For more details, see~\cite{PytheasFogg2002,Lothaire2002}.

Sturmian words are closely related to the free group. This connection is
one of the main points of the series of 
papers~\cite{BerstelDeFelicePerrinReutenauerRindone2012,BertheDeFeliceDolceLeroyPerrinReutenauerRindone2013a,BertheDeFeliceDolceLeroyPerrinReutenauerRindone2013}
and the present one. A striking feature of this connection is the fact that
our results do not hold only for two-letter alphabets or for two
generators but for any number of letters and generators.

Interval exchange transformations 
were introduced by Oseledec~\cite{Oseledec1966}
following
an earlier idea of Arnold~\cite{Arnold1963}. These transformations form a generalization
of rotations of the circle.
The class of regular interval exchange transformations was introduced
by Keane~\cite{Keane1975} who showed that they are minimal in the
sense of topological dynamics. The set of factors of the natural codings of 
a regular interval exchange transformation is called an
interval exchange set.

Even though they have the same factor complexity (that is, the same number
of
factors of a given length), 
Sturmian words and codings of interval exchange transformations have a priori
very distinct combinatorial behaviours, whether for the type of behaviour of their special factors,
or for balance properties and deviations of Birkhoff sums (see 
\cite{CassaigneFerencziMessaoudi2008,Zorich1997}).

The class of tree sets, introduced in~\cite{BertheDeFeliceDolceLeroyPerrinReutenauerRindone2013a},
contains both the Sturmian sets and the regular interval exchange
sets. They are defined by  a condition on the possible extensions of bispecial
factors. 

In a paper with
part of the present list of authors
on bifix codes and Sturmian words
\cite{BerstelDeFelicePerrinReutenauerRindone2012} we proved
that Sturmian sets satisfy the finite index basis
property, in the sense that, given a set $S$ of words on an
alphabet $A$,  a finite bifix code
is $S$-maximal if and only if it is the basis of a subgroup
of finite index of the free group on $A$.
The main statement of
\cite{BertheDeFeliceDolceLeroyPerrinReutenauerRindone2013}
is that uniformly recurrent tree sets satisfy the finite index basis
property.
This generalizes the result concerning Sturmian words
of~\cite{BerstelDeFelicePerrinReutenauerRindone2012} quoted above. 
As
an
example of a consequence of this result, if $S$ is a uniformly
recurrent tree set on the alphabet $A$, then for any $n\ge 1$,
the set $S\cap A^n$ is a basis of the subgroup formed by the words of
length
multiple of $n$ (see Theorem~\ref{theoremGroupCode}).

 Our main result here is that
the class of uniformly recurrent tree sets is closed under maximal
bifix decoding (Theorem~\ref{theoremNormal}). This means that if $S$ is a uniformly recurrent tree set
 and $f$ a coding morphism for a finite
$S$-maximal bifix code, then $f^{-1}(S)$ is a uniformly recurrent tree set.
 The family of regular interval
exchange sets is closed under maximal bifix decoding 
(see~\cite[Theorem 3.13]{BertheDeFeliceDolceLeroyPerrinReutenauerRindone2013ie})
but the family of Sturmian sets is not (see Example~\ref{exampleTribonacci2}
below). Thus, this result shows that the family of uniformly recurrent tree
sets is the natural
closure of the family of Sturmian sets. 

The proof of Theorem~\ref{theoremNormal} uses the
finite index basis property of uniformly recurrent tree sets.
It also  uses the closure of uniformly
recurrent
tree sets under decoding with respect to return words 
(Theorem~\ref{propositionReturns}). This property,
which is interesting in its own, generalizes the fact that the derived
word of a Sturmian word is Sturmian~\cite{JustinVuillon2000}.

The paper is organized as follows. In
Section~\ref{sectionPreliminaries},
 we introduce 
the notation and recall some basic results.
We define the composition of  codes.

In Section~\ref{sectionIntervalExchange}, we introduce one important
subclass of tree sets, namely interval
exchange sets. We recall the definitions concerning minimal
and regular interval
exchange transformations.
 We prove in~\cite{BertheDeFeliceDolceLeroyPerrinReutenauerRindone2013ie} that the
class of regular interval exchange sets is closed under maximal
bifix decoding.

In Section~\ref{sectionReturn}, we define return words, derived words
and derived sets and prove some elementary properties.

In Section~\ref{sectionTreeNormal}, we recall the definition
of tree sets. We also
recall that a regular interval exchange set is a tree set
(Proposition~\ref{propositionExchangeTreeCondition}). 
 We prove that the family of uniformly recurrent tree sets
is closed under derivation (Theorem~\ref{propositionReturns}).
We
further
prove that all bases of the free group included in a uniformly
recurrent tree set are tame, that is, obtained from the alphabet by composition
of elementary positive automorphisms (Theorem~\ref{theoremTame}).

In Section~\ref{sectionSadic}, we turn to the notion of $S$-adic representation
of sets, introduced in~\cite{Ferenczi1996}, using a terminology
initiated by Vershik and coined out by B.~Host.
We  deduce from the previous result that uniformly recurrent tree sets have a 
primitive
$S_e$-adic representation (Theorem~\ref{base tame}) where $S_e$
is the finite set of positive elementary automorphisms of the free group.
In the case of a ternary alphabet, using results from~\cite{Leroy2014},
this result can be refined to
a characterization of the $S$-adic representation of 
tree sets~\cite{Leroy2014bis}.

In Section~\ref{sectionBifixDecoding}, we state and
prove our main result (Theorem~\ref{theoremNormal}), namely the
closure under maximal bifix decoding of the family of uniformly recurrent tree sets.

Finally, in Section~\ref{sectionComposition}, we use
Theorem~\ref{theoremNormal} to prove a result concerning the composition
of bifix codes (Theorem~\ref{theoremCompositionBifix})
showing that the degrees of the terms of a composition are multiplicative.

\paragraph{Acknowledgments} The authors wish to thank the referees for their
suggestions which helped to improve the presentation of the paper.

This work was supported by grants from
Region Ile-de-France, the ANR projects Dyna3S and Eqinocs, the FARB Project
``Aspetti algebrici e computazionali nella teoria dei codici,
degli automi e dei linguaggi formali'' (University of Salerno, 2013)
and the MIUR PRIN 2010-2011 grant
``Automata and Formal Languages: Mathematical and Applicative Aspects''.
\section{Preliminaries}\label{sectionPreliminaries}
In this section, we recall some notions and definitions concerning words,
 codes and automata. For a more detailed presentation, 
see~\cite{BerstelDeFelicePerrinReutenauerRindone2012}.
We also introduce the notion of composition of codes.
\subsection{Words}\label{subsectionWords}
Let $A$ be a finite nonempty alphabet. All words considered below,
unless
stated explicitly, are supposed to be on the alphabet $A$.
We let $A^*$ denote the set of all finite words over $A$ and  $A^+$ the set of finite nonempty words over $A$.
The empty word is denoted by $1$ or by $\varepsilon$ .
We let $|w|$ denote the length of a word $w$.
For a set $X$ of words and a word $x$, we denote
\begin{displaymath}
x^{-1}X=\{y\in A^*\mid xy\in X\},\quad Xx^{-1}=\{z\in A^*\mid zx\in X\}.
\end{displaymath}
A finite word $v$ is a \emph{factor} of a (possibly infinite)
word $x$ if $x=uvw$.
A set of words is said to be \emph{factorial} if it contains the
factors of its elements.
Let $S$ be a set of finite words on the alphabet $A$.
 For $w\in S$,
we denote
\begin{displaymath}
L(w)=\{a\in A\mid aw\in S\},\
R(w)=\{a\in A\mid wa\in S\},
\end{displaymath}
\begin{displaymath}
E(w)=\{(a,b)\in A\times A\mid awb\in S\}
\end{displaymath}
and further
\begin{displaymath}
\ell(w)=\Card(L(w)),\quad r(w)=\Card(R(w)),\quad e(w)=\Card(E(w)).
\end{displaymath}
These notions depend upon $S$ but it is assumed from the context.
A word $w$ is \emph{right-extendable} if $r(w)>0$,
\emph{left-extendable} if $\ell(w)>0$ and \emph{biextendable} if
$e(w)>0$. A 
factorial set
$S$ is called \emph{right-extendable}
(resp. \emph{left-extendable}, resp. \emph{biextendable}) if every word in $S$ is
right-extendable (resp. left-extendable, resp. biextendable).

A word $w$ is called
\emph{right-special}
if $r(w)\ge 2$. It is called \emph{left-special} if $\ell(w)\ge 2$.
It is called \emph{bispecial} if it is both right and left-special.

We let $\Fac(x)$ denote the set of factors of an infinite word $x\in
A^\N$.  The set $\Fac(x)$ is factorial and right-extendable.
An infinite word $x\in A^\omega$ is \emph{recurrent} if for every $u\in \Fac(x)$
there is a word $v$ such that $uvu\in \Fac(x)$.

A factorial set of words $S\ne\{1\}$ is \emph{recurrent}  if for every
$u,w\in S$ there is a word $v$ such that $uvw\in S$.
For any recurrent set $S$ there is an infinite word $x$ such that $\Fac(x)=S$
(see~\cite[Proposition 2.2.1]{BerstelDeFelicePerrinReutenauerRindone2012}). 

For every infinite word $x$, the set $\Fac(x)$ is recurrent if and only if $x$
is recurrent (see~\cite[Proposition 2.2.2]{BerstelDeFelicePerrinReutenauerRindone2012}).

A set of words $S$ is said to be \emph{uniformly recurrent} if it is
right-extendable and if, for any word $u\in S$, there exists an integer $n\ge
1$
such that $u$ is a factor of every word of $S$ of length $n$.
A uniformly recurrent set is recurrent.

A \emph{morphism} $f:A^*\rightarrow B^*$ is a monoid morphism from
$A^*$ to $B^*$. If $a\in A$ is such that the word $f(a)$ begins with
$a$ and if $|f^n(a)|$ tends to infinity with $n$, there is a unique
infinite word denoted $f^\omega(a)$ which has all words $f^n(a)$
as prefixes. It is called a \emph{fixed point} of the morphism $f$.

A morphism $f:A^*\rightarrow A^*$ is called \emph{primitive} if there
is an integer $k$ such that for all $a,b\in A$, the letter $b$
appears in $f^k(a)$. If $f$ is a primitive morphism, the set
of factors of any fixed point
of $f$ is uniformly recurrent (see~\cite[Proposition 1.2.3]{PytheasFogg2002}
 for example).

An infinite word is \emph{episturmian} if the set of its factors 
is closed under reversal and contains for each $n$ at most one word
of length $n$ which is right-special. It is a \emph{strict episturmian} word
if it has exactly one right-special word of each length and
moreover each right-special factor $u$ is such that $r(u)=\Card(A)$.


A \emph{Sturmian set} is a set of words which 
is the set of factors of a strict episturmian word.
Any Sturmian set is uniformly recurrent (see
\cite[Proposition 2.3.3]{BerstelDeFelicePerrinReutenauerRindone2012} for example).
\begin{example}\label{exampleFibonacci}
Let $A=\{a,b\}$.
The Fibonacci word is the fixed point $x=abaababa\ldots$ of the
morphism $f:A^*\rightarrow A^*$ defined by $f(a)=ab$ and $f(b)=a$.
It is a Sturmian word (see~\cite{Lothaire2002}). The set $\Fac(x)$ of factors of $x$ is the
\emph{Fibonacci set}.
\end{example}
\begin{example}\label{exampleTribonacci}
Let $A=\{a,b,c\}$.
The Tribonacci word is the fixed point $x=f^\omega(a)=abacaba\cdots$ of the morphism
$f:A^*\rightarrow A^*$ defined by $f(a)=ab$, $f(b)=ac$, $f(c)=a$.
It is a strict episturmian word (see~\cite{JustinVuillon2000}).
The set $\Fac(x)$ of factors of $x$ is the \emph{Tribonacci set}.
\end{example}

\subsection{Bifix codes}
Recall that a set $X\subset A^+$ of nonempty words over an alphabet
$A$ is a \emph{code}\index{code} if the relation
\begin{displaymath}
  x_1\cdots x_n=y_1\cdots y_m
\end{displaymath}
with $n,m\ge1$ and $x_1,\ldots,x_n,y_1,\ldots,y_m\in X$ implies $n=m$
and $x_i=y_i$ for $i=1,\ldots, n$. For the general theory of codes,
see~\cite{BerstelPerrinReutenauer2009}.

A \emph{prefix code} is a set of nonempty words which does not contain any
proper prefix of its elements. A prefix code is a code.

A suffix code is defined symmetrically.
A  \emph{bifix code} is a set which is both a prefix code and a suffix
code.

A \emph{coding morphism} for a  code $X\subset A^+$ is a morphism
$f:B^*\rightarrow A^*$ which maps bijectively $B$ onto $X$.

Let $S$ be a  set of words. A prefix code $X\subset S$ is $S$-maximal 
if it is not properly contained in any prefix code 
$Y\subset S$. Equivalently, a prefix code
$X\subset S$ is $S$-maximal if every word in $S$ is comparable
for the prefix order with some word of $X$.

A set of words $M$ is called \emph{right unitary}
 if $u,uv\in M$ imply $v\in M$.
The submonoid $M$ generated by a prefix code 
is right unitary.
One can show that conversely, any right unitary submonoid of $A^*$
is generated by a prefix code
(see~\cite{BerstelPerrinReutenauer2009}).
The symmetric notion of a \emph{left unitary} set is defined
by the condition $v,uv\in M$ implies $u\in M$.

We denote by $X^*$ the submonoid generated by $X$.
A set $X\subset S$ is \emph{right $S$-complete} if every word of $S$
is a prefix of a word in $X^*$. If $S$ is factorial,
a prefix code is $S$-maximal if
and only if it is right $S$-complete \cite[Proposition 3.3.2]{BerstelDeFelicePerrinReutenauerRindone2012}.

Similarly a bifix code $X\subset S$ is $S$-maximal if it is
not properly contained in a bifix code $Y\subset S$.
For a recurrent set $S$, a finite bifix code is $S$-maximal as a bifix code if
and only if it is an $S$-maximal prefix code 
\cite[Theorem
4.2.2]{BerstelDeFelicePerrinReutenauerRindone2012}. For a uniformly recurrent set $S$, any finite bifix code 
$X\subset S$ is contained in a finite $S$-maximal bifix code
\cite[Theorem 4.4.3]{BerstelDeFelicePerrinReutenauerRindone2012}.

A \emph{parse} of a word $w\in A^*$ with respect to a set $X$ is
a triple $(v,x,u)$ such that $w=vxu$ where $v$ has no suffix in $X$,
$u$ has no prefix in $X$ and $x\in X^*$.
We denote by $d_X(w)$ the number of parses of $w$ with respect to $X$.

Let $X$ be a bifix code.
The number of parses of a word $w$ is also equal to the number
of suffixes of $w$ which have no prefix in $X$ and the
 number of prefixes of $w$ which have no suffix in $X$
\cite[Proposition 6.1.6]{BerstelPerrinReutenauer2009}.

By definition, the $S$-degree of a bifix code
$X$, denoted $d_X(S)$, is the maximal number
of parses of all words in $S$ with respect to $X$.  It can be finite or infinite.

The set of \emph{internal factors} of a set of words $X$,
denoted $I(X)$,
is the set of words $w$ such that 
there exist nonempty words $u,v$ with $uwv\in X$.

Let $S$ be a recurrent set and let
 $X$ be a finite $S$-maximal bifix code of $S$-degree $d$.
A word $w\in S$ is such that
$d_X(w)< d$ if and only if it is an internal factor of $X$,
that is
\begin{equation}
I(X)=\{w\in S\mid d_X(w)<d\}\label{eqInternal}
\end{equation}
\cite[Theorem 4.2.8]{BerstelDeFelicePerrinReutenauerRindone2012}.
Thus any word of $X$ of maximal length has $d$ parses.
This implies that
the $S$-degree $d$ is finite.
\begin{example}
Let $S$ be a recurrent set. For any integer $n\ge 1$, the set
$S\cap A^n$ is an $S$-maximal bifix code of $S$-degree $n$.
\end{example}

The \emph{kernel}\index{kernel} of a set of words $X$ is the set of
words in $X$ which are internal factors of words in $X$. We let
$K(X)$  denote the kernel of $X$. Note that $K(X)=I(X)\cap X$.

For any recurrent set $S$, a finite $S$-maximal bifix code is
determined by its $S$-degree and its kernel 
(see~\cite[Theorem 4.3.11]{BerstelDeFelicePerrinReutenauerRindone2012}).
\begin{example}\label{exampleDegree1}
Let $S$ be a recurrent set containing the alphabet $A$. 
The only $S$-maximal bifix code of
$S$-degree
$1$ is the alphabet $A$. This is clear since $A$ is the unique
$S$-maximal bifix code of $S$-degree $1$ with empty kernel.
\end{example}
\subsection{Group codes}\label{subsectionautomata}
We let $\A=(Q,i,T)$ denote a deterministic automaton with $Q$ as set of
states,
$i\in Q$ as initial state and $T\subset Q$ as set of terminal states.
For $p\in Q$ and $w\in A^*$, we denote $p\cdot w=q$ if
 there is a path labeled $w$ from $p$ to the state $q$ and $p\cdot
 w=\emptyset$
otherwise (for a general introduction to automata theory, see~\cite{Eilenberg1974} or~\cite{Sakarovitch2009}, for example).

The set \emph{recognized}\index{recognized by an automaton} by the
automaton is the set of words $w\in A^*$ such that $i\cdot w\in T$. A
set of words is \emph{rational}\index{rational set} if it is recognized
by a finite automaton. Two automata are \emph{equivalent} if they
recognize the same set.

All automata considered in this paper are deterministic and we simply
call them  `automata' to mean `deterministic automata'.

The automaton $\A$ is \emph{trim} if for every $q\in Q$, there is a path from $i$ to $q$ and
a path from $q$ to some $t\in T$.

An automaton is called
\emph{simple}\index{automaton!simple}\index{simple automaton} if it is
trim and if it has a unique terminal state which coincides with the
initial state.

An automaton $\A=(Q,i,T)$ is \emph{complete}%
\index{automaton!complete}\index{complete automaton}
if for every state $p\in Q$
and every letter $a\in A$, one has $p\cdot a\ne\emptyset$.

For a nonempty set $L\subset A^*$, we denote by $\A(L)$ the \emph{minimal
  automaton}\index{automaton!minimal}\index{minimal automaton}
 of $L$. The states of $\A(L)$ are the nonempty sets
$u^{-1}L=\{v\in A^*\mid uv\in L\}$ for $u\in A^*$
(see Section~\ref{subsectionWords} for the notation $u^{-1}L$).  
For $u\in A^*$ and $a\in A$, one defines $(u^{-1}L)\cdot a=(ua)^{-1}L$.
The initial state
is the set $L$ and the terminal states are the sets $u^{-1}L$ for
$u\in L$.


Let $X\subset A^*$ be a prefix code. Then there is a simple automaton
$\A=(Q,1,1)$ that recognizes $X^*$. Moreover, the minimal
automaton of $X^*$ is simple.
\begin{example}
The automaton $\A=(Q,1,1)$ represented in Figure~\ref{figureExampleAutomaton} is the minimal
automaton of $X^*$ with $X=\{aa,ab,ac,ba,ca\}$.
\begin{figure}[hbt]
\centering\gasset{Nadjust=wh}
\begin{picture}(60,10)
\node(3)(0,5){$3$}\node[Nmarks=if,iangle=90,fangle=90](1)(20,5){$1$}
\node(2)(40,5){$2$}

\drawedge[curvedepth=3](1,2){$a$}\drawedge[curvedepth=3](2,1){$a,b,c$}
\drawedge[curvedepth=3](1,3){$b,c$}\drawedge[curvedepth=3](3,1){$a$}

\end{picture}
\caption{The minimal automaton of $\{aa,ab,ac,ba,ca\}^*$.}
\label{figureExampleAutomaton}
\end{figure}
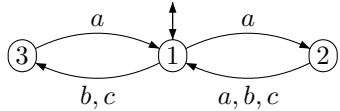
We have $Q=\{1,2,3\}$, $i=1$ and $T=\{1\}$. The initial state is indicated
by
an incoming arrow and the terminal one by an outgoing arrow.
\end{example}

An automaton $\A=(Q,1,1)$
is a \emph{group automaton}\index{group!automaton}\index{automaton!group}
if for every letter $a\in A$ the map $\varphi_\A(a):p\mapsto p\cdot a$
is a permutation of $Q$.

The following result is proved in~\cite[Proposition 6.1.5]{BerstelDeFelicePerrinReutenauerRindone2012}.
\begin{proposition}\label{propositionGroupAutomaton}
  The following conditions are equivalent for a submonoid $M$ of
  $A^*$.
  \begin{enumerate}
  \item[\rm (i)] $M$ is recognized by a group automaton with $d$
    states.
  \item[\rm (ii)] $M=\varphi^{-1}(K)$, where $K$ is a subgroup of
    index $d$ of a group $G$ and $\varphi$ is a surjective morphism
    from $A^*$ onto $G$.
  \item[\rm (iii)] $M=H\cap A^*$, where $H$ is a subgroup of index $d$
    of the free group on $A$.
  \end{enumerate}
  If one of these conditions holds, the minimal generating set of $M$
  is a maximal bifix code of degree $d$.
\end{proposition}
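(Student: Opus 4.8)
The plan is to establish that the three conditions are equivalent by the cycle $(i)\Rightarrow(ii)\Rightarrow(iii)\Rightarrow(i)$, and then to deduce the last assertion from a group automaton recognizing $M$. For $(i)\Rightarrow(ii)$ I would take for $G$ the transition group of the (trim) group automaton $\A=(Q,1,1)$, i.e.\ the image of the morphism $\varphi_\A\colon A^*\to G$ into the group of permutations of $Q$ sending a letter $a$ to $p\mapsto p\cdot a$; since $\A$ is trim, $G$ acts transitively on $Q$, so the stabilizer $K=\Stab_G(1)$ has index $\Card(Q)=d$, and $w\in M$ precisely when $\varphi_\A(w)$ fixes $1$, that is $M=\varphi_\A^{-1}(K)$ with $\varphi_\A$ surjective. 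For $(ii)\Rightarrow(iii)$ I would extend $\varphi\colon A^*\to G$ to a group epimorphism $\widehat\varphi\colon F(A)\to G$ by the universal property of the free group, put $H=\widehat\varphi^{-1}(K)$, of index $[G:K]=d$, and note $H\cap A^*=\varphi^{-1}(K)=M$ since $\widehat\varphi$ restricts to $\varphi$ on $A^*$. For $(iii)\Rightarrow(i)$ I would let $F(A)$ act on the right on its set $Q$ of $d$ right cosets of $H$ and form the automaton $\A=(Q,H,\{H\})$ with $Hg\cdot a=Hga$ for $a\in A$; each letter acts as a permutation of $Q$, so this is a group automaton with $d$ states recognizing $\{w\in A^*\mid Hw=H\}=H\cap A^*=M$. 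The one delicate point is trimness: as $Q$ is finite each letter acts with finite order, so the inverse of a letter's action is a positive power of it; hence the submonoid of permutations generated by the letters equals the group they generate, namely the transitive image of $F(A)$, and therefore every coset is reachable from $H$ and co-reachable along words of $A^*$.

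For the final statement I would start from a trim group automaton $\A=(Q,1,1)$ recognizing $M$ with $d$ states; it is complete (permutations are total), simple, and strongly connected. From $1\cdot u=1\cdot uv=1$ one gets $1\cdot v=1$, so $M$ is right unitary and hence, by the quoted fact, generated by a prefix code, which must be its minimal generating set $X$; since each letter acts injectively, $1\cdot v=1\cdot uv=1$ forces $1\cdot u=1$ as well, so $M$ is left unitary and $X$ is also a suffix code, hence a bifix code. Completeness of $\A$ makes $X$ right $A^*$-complete, so $X$ is an $A^*$-maximal prefix code, and any bifix code containing $X$ is a prefix code containing a maximal one, hence equals $X$; thus $X$ is a maximal bifix code (and thin, being recognizable).

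It remains to show $d_X(A^*)=d$. By the quoted fact, $d_X(w)$ is the number of suffixes of $w$ with no prefix in $X$, equivalently with no nonempty prefix in $M$; the map $u\mapsto 1\cdot u$ sends these injectively into $Q$, for if $u'=zu$ with $z\neq\varepsilon$ and $1\cdot u=1\cdot u'$ then injectivity of $\varphi_\A(u)$ gives $1\cdot z=1$, making $z\in M$ a nonempty prefix of $u'$, a contradiction; hence $d_X(A^*)\le d$. The reverse inequality is where I expect the real work. I would combine the standard identity that the degree of a thin maximal bifix code equals $\sum_{x\in X}|x|\pi(x)$, with $\pi$ the uniform Bernoulli measure, with the observation that the words of $X$ are exactly the first-return words to the state $1$ of the Markov chain on $Q$ that reads a uniform random letter at each step: since every letter acts as a permutation, this chain has a doubly stochastic transition matrix, so the uniform law on $Q$ is stationary, whence the mean return time to $1$ is $d$ and therefore $d_X(A^*)=d$. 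Everything outside this last lower bound is a straightforward unwinding of the definitions.
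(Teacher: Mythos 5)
Your proposal is essentially correct, but note that the paper itself gives no proof of this statement: it quotes it from \cite{BerstelDeFelicePerrinReutenauerRindone2012} (Proposition 6.1.5), so the comparison is with the standard argument given there. Your cycle (i)$\Rightarrow$(ii)$\Rightarrow$(iii)$\Rightarrow$(i) via the transition group, the universal property of $F_A$, and the coset automaton is exactly the expected route, and your remark that in a finite group automaton each letter's inverse action is a positive power of it (so that trimness/transitivity comes for free in the coset automaton, and can be assumed in (i) by restricting to the orbit of the state $1$ --- which is needed for the index to be exactly $d$) is the right way to handle the one delicate point. For the final assertion, your derivation that $X$ is bifix (right and left unitarity from injectivity of the letter actions) and maximal (right completeness from completeness plus strong connectedness), and your upper bound $d_X(A^*)\le d$ by injecting the suffixes of $w$ without prefix in $X$ into $Q$, again match the standard proof. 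Where you genuinely diverge is the lower bound: the classical argument is combinatorial --- the suffixes of $w$ with no prefix in $X$ correspond to the last occurrences of the cosets visited by the prefixes of $w$, so $d_X(w)$ equals the number of distinct cosets visited, and surjectivity of $\varphi$ lets one build a word visiting all $d$ of them --- whereas you import the average-length theorem for thin maximal bifix codes ($\sum_{x\in X}|x|\pi(x)=d_X(A^*)$) together with Kac's formula for the doubly stochastic chain on $Q$. This is correct but much heavier machinery than needed, and it silently requires $X$ to be thin; your parenthetical ``thin, being recognizable'' is a genuine theorem of Berstel--Perrin--Reutenauer, but in this special case thinness also follows directly (a word which, read from every state, passes through the state $1$ internally cannot be a factor of a first-return word to $1$), which would make your argument self-contained. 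The elementary coset-counting argument buys a proof with no probabilistic input and yields the stronger pointwise statement that $d_X(w)$ is the number of cosets met by the prefixes of $w$; your probabilistic route buys brevity once the average-length theorem is granted.
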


A bifix code $Z$ such that $Z^*$ satisfies one of the equivalent
conditions of Proposition~\ref{propositionGroupAutomaton} is
called a \emph{group code}\index{group!code}\index{code!group}
of degree $d$.

\subsection{Composition of codes}
We introduce the notion of composition of codes 
(see~\cite{BerstelPerrinReutenauer2009} for a more detailed
presentation).

For a set $X\subset A^*$, we denote by $\alp(X)$ the set of letters 
$a\in A$ which appear in the words of $X$.

Let $Z \subset A^*$ and $Y \subset B^*$ be two finite codes with $B =
\alp(Y)$. Then the codes $Y$ and $Z$ are {\em
  composable} if there is a bijection from $B$
onto $Z$. Since $Z$ is a code, this bijection defines  an injective
 morphism  from $B^*$ into $A^*$.
 If $f$ is such a morphism, then $Y$ and $Z$ are called
composable {\em through} $f$.  The set
\begin{equation}\label{eq1.6.1}
  X=f(Y)\subset Z^*\subset A^*           
\end{equation}
is obtained by {\em composition}
 of $Y$ and
$Z$ (by means of $f$). We denote it by
$X=Y\circ_f Z$, or by $X=Y\circ Z$ when the context permits it. Since $f$ is
injective, $X$ and $Y$ are related by bijection, and in particular
$\Card(X) = \Card(Y)$.  The words in $X$ are obtained just by
replacing, in the words of $Y$, each letter $b$ by the word $f(b)
\in Z$.

\begin{example}
Let $A=\{a,b\}$ and $B=\{u,v,w\}$. Let $f:B^*\rightarrow A^*$ be the
morphism defined by $f(u)=aa$, $f(v)=ab$ and $f(w)=ba$. Let
$Y=\{u,vu,vv,w\}$
and $Z=\{aa,ab,ba\}$. Then $Y,Z$ are composable through $f$ and
$Y\circ_f Z=\{aa,abaa,abab,ba\}$.
\end{example}
  If $Y$ and $Z$ are two composable codes, then $X = Y \circ Z$ is a
code \cite[Proposition 2.6.1]{BerstelPerrinReutenauer2009}
and if $Y$ and $Z$ are prefix (suffix) codes, then
$X$ is a prefix (suffix) code.
Conversely, if $X$ is a prefix (suffix) code, then $Y$ is a prefix
(suffix) code. 

We extend the notation $\alp$ as follows. For two codes $X,Z\subset
A^*$
we denote $\alp_Z(X)$ the set of $z\in Z$ such that $uzv\in X$
for some $u,v\in Z^*$.
The following is Proposition 2.6.6
in~\cite{BerstelPerrinReutenauer2009}.

\begin{proposition}\label{prop266}
Let $X,Z\subset A^*$ be codes. There exists a code $Y$ such that
$X=Y\circ Z$ if and only if $X\subset Z^*$ and $\alp_Z(X)=Z$.
\end{proposition}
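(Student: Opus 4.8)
The plan is to prove the two implications separately, the forward one being essentially a matter of unwinding definitions and the converse one requiring an explicit construction of the intermediate code $Y$.

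First I would treat the necessity. Suppose $X = Y\circ_f Z$ for some code $Y\subset B^*$ with $B=\alp(Y)$ and some morphism $f:B^*\to A^*$ through which $Y$ and $Z$ are composable, so that $f$ maps $B$ bijectively onto $Z$. By the very definition~\eqref{eq1.6.1} we have $X=f(Y)\subset Z^*$, which gives the first half of the right-hand side. For the second half, I want to show $\alp_Z(X)=Z$. Since $f(B)=Z$, every element $z\in Z$ is $f(b)$ for a unique $b\in B$, and since $B=\alp(Y)$ there is a word $y\in Y$ in which $b$ occurs, say $y=b'bb''$ with $b',b''\in B^*$. Applying $f$ and using that $f$ is a morphism, $f(y)=f(b')\,z\,f(b'')\in X$ with $f(b'),f(b'')\in Z^*$, so $z\in\alp_Z(X)$. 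As $z$ was arbitrary, $Z\subseteq\alp_Z(X)$; the reverse inclusion is immediate from the definition of $\alp_Z$, so $\alp_Z(X)=Z$.

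For the sufficiency, assume $X\subset Z^*$ and $\alp_Z(X)=Z$. Since $Z$ is a code, every word of $Z^*$, in particular every word of $X$, has a unique factorization into elements of $Z$. I would introduce a fresh alphabet $B$ in bijection with $Z$, fix the corresponding bijective morphism $f:B^*\to A^*$ (injective because $Z$ is a code), and define $Y=f^{-1}(X)\subset B^*$; equivalently, $Y$ is obtained by rewriting each word of $X$, through its unique $Z$-factorization, as a word over $B$. Then $f(Y)=X$ by construction. It remains to check (i) that $Y$ is a code, (ii) that $\alp(Y)=B$, and (iii) that $Y$ and $Z$ are composable through $f$ so that $X=Y\circ_f Z$ genuinely holds. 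Point (iii) is automatic once (ii) holds, since $f$ restricted to $\alp(Y)=B$ is a bijection onto $Z$. For (ii): a letter $b\in B$ lies in $\alp(Y)$ iff the corresponding $z=f(b)\in Z$ occurs in the $Z$-factorization of some word of $X$, i.e.\ iff $z\in\alp_Z(X)$; by hypothesis this holds for all $z\in Z$, so $\alp(Y)=B$. For (i): a relation $y_1\cdots y_n=y_1'\cdots y_m'$ in $Y$ is carried by the injective morphism $f$ to a relation $f(y_1)\cdots f(y_n)=f(y_1')\cdots f(y_m')$ among words of $X$; since $X$ is a code this forces $n=m$ and $f(y_i)=f(y_i')$, and injectivity of $f$ then gives $y_i=y_i'$, so $Y$ is a code.

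The only delicate point is making sure the rewriting defining $Y$ is well posed, and this is exactly where the hypothesis $X\subset Z^*$ combined with $Z$ being a code is used: it guarantees each word of $X$ has one and only one $Z$-factorization, so $f^{-1}(X)$ is unambiguously a set of words over $B$. Everything else is bookkeeping with the morphism $f$ and the definitions of $\alp$ and $\alp_Z$. I expect no substantial obstacle; the statement is a clean reformulation of composability, and the main care is simply to keep the bijection $B\leftrightarrow Z$ and the induced morphism $f$ straight throughout.
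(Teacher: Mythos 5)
Your proof is correct, and it is essentially the standard argument: the paper itself gives no proof of Proposition~\ref{prop266}, merely citing Proposition~2.6.6 of Berstel--Perrin--Reutenauer, and your two implications (unwinding the definition for necessity; for sufficiency, taking $B$ in bijection with $Z$, $f:B^*\to A^*$ the induced injective morphism, and $Y=f^{-1}(X)$, with the hypotheses $X\subset Z^*$, $Z$ a code guaranteeing unique $Z$-factorizations and $\alp_Z(X)=Z$ giving $\alp(Y)=B$) reproduce exactly that argument. The only cosmetic remark is that if one insists on the paper's convention that composition is defined for finite codes, then finiteness of $Y$ is automatic since $f$ puts $Y$ in bijection with $X$.
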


The following statement generalizes Propositions 2.6.4 and 2.6.12
of~\cite{BerstelPerrinReutenauer2009} for prefix codes.
\begin{proposition}\label{propositionMaxPref}
Let $Y,Z$ be   finite prefix codes
 composable through $f$  and let $X=Y\circ_f Z$.
\begin{enumerate}
\item[\rm(i)] For every set $T$ such that $Y\subset T$ and $Y$
is a $T$-maximal prefix
  code, $X$ is an $f(T)$-maximal prefix code.
\item[\rm(ii)] For every set $S$  such that $X,Z\subset S$, if $X$ is an
$S$-maximal prefix code,  $Y$ is an
  $f^{-1}(S)$-maximal
prefix code and $Z$ is an $S$-maximal prefix code.
The converse is true if $S$ is recurrent.
\end{enumerate}
\end{proposition}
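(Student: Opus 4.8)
The plan is to prove the two parts of Proposition~\ref{propositionMaxPref} separately, in the order stated, using the characterization of $S$-maximal prefix codes in terms of right $S$-completeness from \cite[Proposition 3.3.2]{BerstelDeFelicePerrinReutenauerRindone2012} and the fact that composition commutes, in a suitable sense, with the prefix order.

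For part (i), I would argue directly with the completeness characterization. Suppose $Y$ is a $T$-maximal prefix code, i.e.\ $Y$ is right $T$-complete: every word of $T$ is a prefix of a word of $Y^*$. I want $X=f(Y)$ to be right $f(T)$-complete, i.e.\ every word of $f(T)$ is a prefix of a word of $X^*=f(Y^*)$. Take $w\in f(T)$, say $w=f(t)$ with $t\in T$. Since $Y$ is right $T$-complete, $t$ is a prefix of some $y_1\cdots y_n\in Y^*$, so $t u = y_1\cdots y_n$ for some $u\in B^*$; applying $f$ gives $f(t)f(u)=f(y_1)\cdots f(y_n)\in X^*$, so $w=f(t)$ is a prefix of a word of $X^*$. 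Hence $X$ is right $f(T)$-complete, and since $X$ is a prefix code (composition of prefix codes, by the discussion preceding Proposition~\ref{prop266}) contained in $f(T)^*\subset A^*$, it is an $f(T)$-maximal prefix code. A minor point to check is that $f(T)$ is factorial whenever $T$ is, so that \cite[Proposition 3.3.2]{BerstelDeFelicePerrinReutenauerRindone2012} applies; but $f(T)$ need not be factorial as a set of words over $A$ — however, the completeness argument above is self-contained and does not actually need this, so I would phrase the conclusion directly in terms of right $f(T)$-completeness and the definition of $T$-maximality, avoiding the factoriality hypothesis.

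For part (ii), assume $X,Z\subset S$ and $X$ is an $S$-maximal prefix code. First, $Z$ is $S$-maximal: any $w\in S$ is a prefix of some word of $X^*\subset Z^*$ (since $X$ is right $S$-complete), hence a prefix of a word of $Z^*$, so $Z$ is right $S$-complete, and being a prefix code contained in $S$ it is $S$-maximal. Next, $Y$ is $f^{-1}(S)$-maximal: it is a prefix code (converse direction of the composition fact for prefix codes, preceding Proposition~\ref{prop266}) contained in $f^{-1}(S)$ since $X=f(Y)\subset S$; for completeness, take $v\in f^{-1}(S)$, so $f(v)\in S$ is a prefix of some $x_1\cdots x_n\in X^*=f(y_1\cdots y_n)$. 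The issue is that $f(v)$ being a prefix of $f(y_1\cdots y_n)$ does not immediately give that $v$ is a prefix of $y_1\cdots y_n$ — one needs that $v$ and $y_1\cdots y_n$ are comparable, which follows because $f$ is injective and $Z$ is a prefix code (so $f$ maps the prefix order on $B^*$ faithfully, and a prefix of $f(v)$ that lies in $Z^*\cdot Z^{\mathrm{pref}}$ is the image of a prefix of $v$ up to a partial $Z$-block). Making this block-alignment argument precise is the main obstacle: I would show that if $f(v)$ is a prefix of a word $m\in Z^*$ with $m=f(y_1\cdots y_n)$, then either $v$ is a prefix of $y_1\cdots y_n$, in which case $v\in f^{-1}(Z^*)$ already shows $v$ is a prefix of a word of $Y^*$ after possibly extending, or $y_1\cdots y_n$ is a prefix of $v$; in the latter case one uses that $S$ is factorial and recurrent to extend, which is exactly where the "converse is true if $S$ is recurrent" hypothesis enters.

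For the converse in part (ii), assume $S$ recurrent, $Z$ an $S$-maximal prefix code, and $Y$ an $f^{-1}(S)$-maximal prefix code; I want $X=Y\circ_f Z$ to be $S$-maximal. Here I would use part (i) with $T=f^{-1}(S)$: since $Y$ is $f^{-1}(S)$-maximal, part (i) gives that $X$ is an $f(f^{-1}(S))$-maximal prefix code, and it remains to bridge between $f(f^{-1}(S))$-maximality and $S$-maximality. Take $w\in S$; since $Z$ is $S$-maximal it is right $S$-complete, so $w$ is a prefix of some $z_1\cdots z_k\in Z^*$, and by recurrence of $S$ we may choose a longer element of $Z^*$ of the form $z_1\cdots z_k z'$ lying in $S$ and which is the image $f(v)$ of some $v\in B^*$; then $v\in f^{-1}(S)$, so $v$ is a prefix of a word of $Y^*$, whence $f(v)$ — and therefore its prefix $w$ — is a prefix of a word of $f(Y^*)=X^*$. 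Thus $X$ is right $S$-complete and, being a prefix code contained in $S$, is $S$-maximal. The recurrence of $S$ is used precisely to pad a word of $Z^*$ that is merely a multiple of blocks up to one that is a full $f$-image while staying inside $S$.
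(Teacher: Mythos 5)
Your plan leans throughout on the identification ``$S$-maximal prefix code $=$ right $S$-complete'', but that equivalence (the cited \cite[Proposition 3.3.2]{BerstelDeFelicePerrinReutenauerRindone2012}) requires the ambient set to be \emph{factorial}, and in the statement neither $T$ in (i) nor $S$ in the forward direction of (ii) is assumed factorial or recurrent. So your very first step in (i), ``$Y$ is a $T$-maximal prefix code, i.e.\ $Y$ is right $T$-complete'', is false in general (e.g. $Y=\{u\}$, $T=\{u,uv\}$: $Y$ is $T$-maximal but $uv$ is not a prefix of a word of $u^*$), and the same defect infects your proof that $Z$ and $Y$ are maximal in the forward direction of (ii). This is not a cosmetic issue: the paper applies (i) precisely to sets of the form $w^{-1}T$ and $v^{-1}S$, which are typically not factorial. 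You flagged a factoriality worry, but about the wrong set ($f(T)$, on the conclusion side, where right-completeness $\Rightarrow$ maximality is unproblematic); the gap is on the hypothesis side. The paper avoids all of this by using the other characterization of maximality, valid for arbitrary sets: every word of the ambient set is prefix-comparable with a word of the code. With that, (i) is one line (a morphism preserves prefix-comparability), and the forward direction of (ii) needs only the observation that if $v=f(u)$ is comparable with $x\in X$, then the longer one lies in $Z^*u'$ with the difference in $Z^*$ (right unitarity of $Z^*$), so $u$ is comparable with $f^{-1}(x)\in Y$; in particular your ``latter case'' ($y_1\cdots y_n$ a prefix of $v$) needs no extension at all, and your appeal there to recurrence of $S$ uses a hypothesis that is simply not available in the forward direction.

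In the converse of (ii) the hypotheses do give factoriality (recurrent sets are factorial here), so your completeness arguments are legitimate, but the pivotal step ``by recurrence of $S$ we may choose a longer element of $Z^*$ of the form $z_1\cdots z_kz'$ lying in $S$'' is exactly the delicate point and is asserted without proof; recurrence alone does not hand you a word of $Z^*\cap S$ extending $w$. The paper sidesteps this by truncating instead of extending: take $w\in S$ longer than the sum of the maximal lengths of the words of $X$ and of $Z$, write $w=up$ with $u\in Z^*$ and $p$ a proper prefix of a word of $Z$ (right $S$-completeness of $Z$); then $u$ is a prefix of $w$, hence $u\in S$ by factoriality, $v=f^{-1}(u)\in f^{-1}(S)$ is too long to be a proper prefix of a word of $Y$, so it has a prefix in $Y$ and $w$ has a prefix in $X$. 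Your step can be repaired along the same lines (extend $w$ inside $S$ and take the $Z^*$-prefix of the extension, which is long and lies in $S$), but as written it is a gap; also note that your invocation of part (i) with $T=f^{-1}(S)$ does no real work, since $f(f^{-1}(S))$-maximality is weaker than the $S$-maximality you need and the bridging argument carries the whole proof.
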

\begin{proof}
\noindent(i) Let $w\in f(T)$ and set $w=f(v)$ with $v\in T$.
 Since $Y$ is $T$-maximal, there
is a word $y\in Y$ which is prefix-comparable with
 $v$. Then
$f(y)$ is  prefix-comparable with $w$. Thus $X$ is $f(T)$-maximal.

\noindent(ii) Since $X$ is an $S$-maximal prefix code,  any word
in $S$ is prefix-comparable with some element of $X$ and thus
with some element of $Z$. Therefore, $Z$ is $S$-maximal. Next
if $u\in f^{-1}(S)$, $v=f(u)$ is in $S$ and is prefix-comparable with a word $x$
in $X$. Assume that $v=xt$. Then $t$ is in $Z^*$ since $v,x\in Z^*$. Set $w=f^{-1}(t)$
and $y=f^{-1}(x)$.
Since $u=yw$, $u$ is prefix-comparable with $y$ which is
in $Y$. The other case is similar.

Conversely, assume that $S$ is
recurrent. Let $w$ be a word in $S$ of length strictly larger than the sum
of the maximal length of the words of $X$ and $Z$.
Since $S$ is recurrent, the set $Z$ is right $S$-complete, 
and consequently the word $w$ is a
prefix
of a word in $Z^*$. Thus $w=up$ with $u\in Z^*$ and $p$ a proper prefix of a
word
in $Z$. The hypothesis on $w$ implies that $u$ is longer than any word
of $X$. Let $v=f^{-1}(u)$. Since $u\in S$, we have $v\in f^{-1}(S)$. It is
not possible that $v$ is a proper prefix of a word of $Y$ since
otherwise $u$ would be shorter than a word of $X$. Thus $v$ has
a prefix in $Y$. Consequently $u$, and thus $w$,  has a prefix in $X$.
Thus $X$ is $S$-maximal.
\end{proof}

Note that the converse of (ii) is not true if the hypothesis that $S$ is
recurrent
is replaced by factorial. Indeed, for $S=\{1,a,b,aa,ab,ba\}$, $Z=\{a,ba\}$,
 $Y=\{uu,v\}$, $f(u)=a$ and $f(v)=ba$,
one has $f^{-1}(S)=\{1,u,uu,v\}$ and $X=\{aa,ba\}$, which is
not an $S$-maximal prefix code.

Note also that when $S$ is recurrent (or even uniformly recurrent), the set
$T=f^{-1}(S)$ need not be recurrent. Indeed, let $S$ be the set
of factors of $(ab)^*$, let $B=\{u,v\}$ and let $f:B^*\rightarrow A^*$
be defined by $f(u)=ab$, $f(v)=ba$. Then $T=u^*\cup v^*$ which is not
recurrent.

\section{Interval exchange sets}\label{sectionIntervalExchange}
In this section, we recall the definition and the basic properties
of interval exchange transformations. 
\subsection{Interval exchange transformations}
Let us recall the definition of an interval exchange transformation
(see~\cite{CornfeldFominSinai1982} or~\cite{BertheRigo2010}).

A \emph{semi-interval} is a nonempty subset of the real line of the
form $[\alpha,\beta)=\{z\in\R\mid \alpha\le z<\beta\}$. Thus it
is a left-closed and right-open interval. For two semi-intervals
$\Delta,\Gamma$, we denote $\Delta<\Gamma$ if $x<y$
for any $x\in\Delta$ and $y\in\Gamma$. 

Let $(A,<)$ be an ordered set. A partition $(I_a)_{a\in A}$
of $[0,1)$ in semi-intervals is \emph{ordered}  if $a<b$
implies $I_a<I_b$.

Let $A$ be a finite set ordered by two total orders $<_1$ and $<_2$. Let 
$(I_a)_{a\in A}$ be  a partition of $[0,1)$ in semi-intervals ordered
  for $<_1$.
Let $\lambda_a$ be the length of $I_a$.
Let $\mu_a=\sum_{b\le_1 a}\lambda_b$ and $\nu_a=\sum_{b\le_2
  a}\lambda_b$.
Set $\alpha_a=\nu_a-\mu_a$.
The \emph{interval exchange transformation} relative to 
$(I_a)_{a\in A}$ is the  map $T:[0,1)\rightarrow [0,1)$ defined by
\begin{displaymath}
 T(z)=z+\alpha_a\quad \text{ if } z\in I_a.
\end{displaymath}
Observe that the restriction of $T$ to $I_a$ is a translation
onto $J_a=T(I_a)$, that $\mu_a$ is the right boundary of $I_a$
and that $\nu_a$ is the right boundary of $J_a$.
We additionally denote
 by $\gamma_a$ the left boundary of $I_a$ and
by $\delta_a$ the left boundary of $J_a$. Thus
$I_a=[\gamma_a,\mu_a)$, $J_a=[\delta_a,\nu_a)$.

Since $a<_2 b$ implies $J_a<_2J_b$, the family
$(J_a)_{a\in A}$ is a partition of $[0,1)$ ordered for $<_2$.
In particular,
the transformation $T$ defines a bijection from $[0,1)$ onto itself.

An interval exchange transformation relative to 
$(I_a)_{a\in A}$ is also said to be on the alphabet $A$.
The values $(\alpha_a)_{a\in A}$ are called the \emph{translation
  values}
of the transformation $T$.
\begin{example}\label{exampleRotation}
 Let $R$ be the interval
exchange
transformation corresponding to $A=\{a,b\}$, $a<_1b$, $b<_2a$,
$I_a=[0,1-\alpha)$,
$I_b=[1-\alpha,1)$ with $0<\alpha<1$.
The transformation $R$ is the rotation  of angle $\alpha$ on the semi-interval $[0,1)$ 
defined by $R(z)=z+\alpha\bmod 1$.
\end{example}
Since $<_1$ and $<_2$ are total
orders, there exists a unique permutation $\pi$ of $A$
such that $a<_1b$ if and only if $\pi(a)<_2\pi(b)$.
Conversely, $<_2$ is determined by $<_1$ and $\pi$,
and $<_1$ is determined by $<_2$ and $\pi$.
The permutation $\pi$ is said to be \emph{associated} with $T$.

Let $s\ge 2$ be an integer. If we set $A=\{a_1,a_2,\ldots,a_s\}$ with $a_1<_1a_2<_1\cdots<_1a_s$,
  the pair $(\lambda,\pi)$ formed by the family
$\lambda=(\lambda_a)_{a\in A}$ and the permutation
$\pi$
determines the map $T$. We will also denote $T$ as $T_{\lambda,\pi}$.
The transformation $T$ is also said to be an $s$-interval exchange
transformation.

It is easy to verify that the family of $s$-interval exchange
transformations
is closed by composition and by taking inverses.

\begin{example}
A $3$-interval exchange transformation is represented in
Figure~\ref{figure3interval}. One has $A=\{a,b,c\}$ with
$a<_1b<_1c$ and $b<_2c<_2a$.
The associated permutation is the cycle $\pi=(abc)$.
\begin{figure}[hbt]
\centering
\centering\gasset{Nh=2,Nw=2,ExtNL=y,NLdist=2,AHnb=0,ELside=r}

\begin{picture}(60,20)(0,-5)

\node[fillcolor=red](0H)(0,15){}
\node[fillcolor=blue](1H)(32.4,15){$\mu_a$}
\node[fillcolor=green](2H)(49.8,15){$\mu_b$}
\node(3H)(60,15){$\mu_c$}
\node[Nframe=n](05H)(16,15){}\node[Nframe=n](15H)(40,15){}\node[Nframe=n](25H)(55,15){}

\node[fillcolor=blue](0B)(0,0){}
\node[fillcolor=green](1B)(17.4,0){$\nu_b$}
\node[fillcolor=red](2B)(27.6,0){$\nu_c$}
\node(3B)(60,0){$\nu_a$}
\node[Nframe=n](05B)(5,0){}\node[Nframe=n](15B)(22,0){}\node[Nframe=n](25B)(43,0){}

\drawedge[linecolor=red,linewidth=1](0H,1H){}
\drawedge[linecolor=blue,linewidth=1](1H,2H){}
\drawedge[linecolor=green,linewidth=1](2H,3H){}
\drawedge[linecolor=blue,linewidth=1](0B,1B){}\drawedge[linecolor=green,linewidth=1](1B,2B){}
\drawedge[linecolor=red,linewidth=1](2B,3B){}
\drawedge[AHnb=1](05H,25B){}\drawedge[AHnb=1](15H,05B){}
\drawedge[AHnb=1](25H,15B){}
\end{picture}
\caption{A $3$-interval exchange transformation.}\label{figure3interval}
\end{figure}
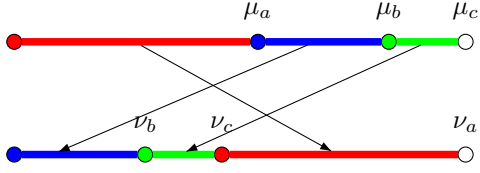
\end{example}

\subsection{Regular interval exchange transformations}

The \emph{orbit} of a point $z\in[0,1)$ is the set $\{T^n(z)\mid
  n\in\Z \}$. 
The transformation $T$ is said to be \emph{minimal} if for any
  $z\in[0,1)$,
the  orbit of $z$ is dense in $[0,1)$.

Set $A=\{a_1,a_2,\ldots,a_s\}$ with $a_1<_1 a_2<_1\ldots<_1 a_s$,
 $\mu_i=\mu_{a_i}$ and $\delta_i=\delta_{a_i}$. The points $0,\mu_1,\ldots,\mu_{s-1}$
form the set of \emph{separation points} of $T$, denoted $\Sep(T)$.

An interval exchange transformation $T_{\lambda,\pi}$ is called
\emph{regular} if the orbits of the nonzero separation
points $\mu_1,\ldots,\mu_{s-1}$ are infinite and disjoint.
Note that the orbit of $0$ cannot be disjoint of the others
since one has $T(\mu_i)=0$ for some $i$ with $1\le i\le s$.

\begin{example} The $2$-interval exchange transformation $R$
of Example~\ref{exampleRotation}
which is the rotation of angle $\alpha$ is regular
if and only if $\alpha$ is irrational.
\end{example}
The following result is due to Keane~\cite{Keane1975}.
\begin{theorem}\label{theoremKeane}
A regular interval exchange transformation is minimal.
\end{theorem}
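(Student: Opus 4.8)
The plan is to argue by contradiction: assume $T$ is regular but not minimal, and extract from regularity two facts. First, $T$ has \emph{no periodic point}: if $T^n(y)=y$, the largest semi-interval $[c,d)$ containing $y$ on which $T^n$ acts as a single translation carries the identity map (the translation is trivial since $T^n(y)=y$), so $T^n$ fixes $c$ and $d$; by maximality each of $c,d$ is $0$, $1$, or a discontinuity point $T^{-k}(\mu_i)$ of $T^n$, and $T^n$ fixing such a point would force $T^n(\mu_i)=\mu_i$, against the infiniteness of the orbit of $\mu_i$ — so $[c,d)=[0,1)$, $T^n=\mathrm{id}$, and a separation point is periodic, absurd. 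Second, $T$ has \emph{no proper invariant sub-interval} $[0,b)$ or $[b,1)$: comparing $[0,b)$ with the tiling of $[0,1)$ by the $J_a$'s shows that such an interval would make the permutation $\pi$ reducible, forcing a separation point to be periodic or two separation-point orbits to coincide — both excluded by regularity.

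Now suppose $T$ is not minimal. Then there is a nonempty proper closed $T$- and $T^{-1}$-invariant set $F$; write $U=[0,1)\setminus F$ as a disjoint union of intervals, its \emph{gaps}. (The construction of $F$ needs a little care since $T$ is discontinuous; one may take $F=[0,1)\setminus\bigcup_{n\in\Z}T^{-n}(V)$ for a suitable interval $V$ disjoint from some non-dense orbit.) I would show first that no gap has its left endpoint in $F$: if $(p,q)$ is such a gap, apply $T^{-1}$ — which is right-continuous — to a small right-neighborhood of $p$ inside the gap to see that $T^{-1}(p)\in F$ is again the left endpoint of a gap; iterating gives an infinite sequence $p_j=T^{-j}(p)$ of left endpoints of pairwise distinct gaps $C_j$ (distinct since $T$ has no periodic point). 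The length of $C_{j+1}$ is at least that of $C_j$ unless the interior of $C_j$ contains one of the $s-1$ discontinuities of $T^{-1}$, which — the $C_j$ being disjoint — happens at most $s-1$ times; hence the gap lengths are eventually non-decreasing and bounded below by a positive constant, producing infinitely many disjoint gaps of a fixed positive length inside $[0,1)$ — impossible. So every gap has left endpoint outside $F$; since $F$ is closed, the left endpoint of a gap can lie outside $F$ only if it equals $0$, and there is at most one such gap. Thus $U=[0,b)$ is a single interval, and invariance of $U$ together with the second fact forces $b=1$, i.e. $F=\emptyset$ — contradicting the choice of $F$. Hence $T$ is minimal.

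The conceptual core, and the step I expect to require the most care, is this backward chasing: the left endpoint of a gap of $F$ cannot be iterated backward under $T$ indefinitely, because the eventual monotonicity of the gap lengths would force infinitely many gaps of a common positive length into $[0,1)$ — the only way out being to land on the boundary point $0$. The surrounding bookkeeping (open versus half-closed gaps, the exact construction of $F$, and the accounting of the finitely many length drops) is routine but must be handled carefully.
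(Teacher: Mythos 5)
The paper itself offers no proof of this statement (it is quoted from Keane's 1975 article), so your attempt can only be measured against the classical argument — and your plan is indeed the classical one, and its core mechanism is right: pulling back the left endpoint of a gap under $T^{-1}$, noting that the gap length can only drop when the gap straddles one of the $s-1$ discontinuities of $T^{-1}$, and using disjointness of the gaps to bound the number of drops. However, two steps of your write-up have genuine gaps, not just bookkeeping to be filled in.

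First, in the no-periodic-point lemma you assert that $T^n$ fixes both $c$ and $d$. It fixes $c$, since $c\in[c,d)$, but nothing gives $T^n(d)=d$: $T^n$ is only right-continuous at $d$, so the identity on $[c,d)$ says nothing about $T^n(d)$. Excluding discontinuity points therefore only yields $c=0$, and you are left with the case $T^n=\mathrm{id}$ on $[0,d)$, i.e.\ $T^n(0)=0$; regularity as defined constrains only the orbits of the nonzero separation points, so you must add the (true but not free) fact that $0$ lies in the forward orbit of some nonzero $\mu_i$ — equivalently that $T(0)=0$ is impossible for a regular $T$, which itself requires a short argument (if $T(0)=0$ then $[\mu_1,1)$ is invariant and the piece of the restricted exchange sent to its bottom forces $T(\mu_j)=\mu_1$ for some $j$, giving a finite orbit or two meeting orbits). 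Second, and more seriously, your endgame ("since $F$ is closed, a gap with left endpoint outside $F$ must start at $0$") needs $F$ closed, but the set $F=[0,1)\setminus\bigcup_{n}T^{-n}(V)$ you propose is invariant yet in general not closed: images and preimages of an open interval under an interval exchange are unions of left-closed intervals, so $U$ need not be open and can have components $[p,q)$ with $p>0$ and $p\in U$, which your backward-chasing argument never touches. This is exactly where the discontinuity of $T$ bites — orbit closures are only invariant up to the finitely many discontinuity points, so a closed, fully invariant, proper set is not available for free — and repairing it requires either a different construction of $F$ or an extension of the iteration to left-closed gaps (restarting at the at most $s$ special points $0,\delta_a$ and re-establishing that the gaps encountered are pairwise distinct). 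A similar remark applies to your "no invariant subinterval" step, which tacitly assumes the endpoint $b$ is a separation point; this is provable (injectivity gives $T(b)\in\{b\}\cup\{\delta_a\}$, hence $b$ is fixed or equals some $\mu_k$), but it is an additional statement you would have to establish.
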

Note that the converse is not true (see~\cite{BertheDeFeliceDolceLeroyPerrinReutenauerRindone2013ie} for an example).

\subsection{Natural coding}
 Let $T$ be an interval exchange transformation relative to 
$(I_a)_{a\in A}$. 
For a given real number $z\in[0,1)$,
the \emph{natural coding} of $T$ relative to $z$ is the infinite word
$\Sigma_T(z)=a_0a_1\cdots$ on the alphabet $A$
defined by 
\begin{displaymath}
a_n=a\quad \text{ if }\quad T^n(z)\in I_{a}.
\end{displaymath}

\begin{example}\label{exampleFiboNatCoding}
Let $\alpha=(3-\sqrt{5})/2$ and let $R$ be the rotation of
angle $\alpha$ on $[0,1)$ as in Example~\ref{exampleRotation}.
The natural coding of $R$ with respect to $\alpha$ is the Fibonacci
word (see~\cite[Chapter 2]{Lothaire2002} for example).
\end{example}
For a word $w=b_0b_1\cdots b_{m-1}$, 
let $I_w$ be the set
\begin{equation}
I_w=I_{b_0}\cap T^{-1}(I_{b_1})\cap\cdots\cap T^{-m+1}(I_{b_{m-1}}).\label{eqIu}
\end{equation}
Note that each $I_w$ is a semi-interval. Indeed, this is true if $w$ is
a letter. Next,  assume that $I_w$ is a semi-interval. Then
for any $a\in A$,
$T(I_{aw})=T(I_a)\cap I_w$ is a semi-interval since $T(I_a)$ is a
semi-interval
by definition of an interval exchange transformation. Since $I_{aw}\subset
I_a$, $T(I_{aw})$ is a translate of $I_{aw}$, which is therefore also
a semi-interval. This proves the property by induction on the length.

Then one has for any $n\ge 0$
\begin{equation}
a_na_{n+1}\cdots a_{n+m-1}=w \Longleftrightarrow T^n(z)\in I_w.\label{eqIw}
\end{equation}

If $T$ is minimal, one has $w\in \Fac(\Sigma_T(z))$ if
and only
if $I_w\ne\emptyset$.
Thus the set $\Fac(\Sigma_T(z))$ does not depend on $z$ (as for Sturmian words,
see~\cite{Lothaire2002}). Since it depends only on $T$, we denote
it by $\Fac(T)$. When $T$ is regular (resp. minimal), such a set is called a
\emph{regular interval exchange set} (resp. a minimal interval exchange set).

 The following
statement is well known (see~\cite{BertheDeFeliceDolceLeroyPerrinReutenauerRindone2013ie}).
\begin{proposition}\label{propositionRegularUR}
For any minimal interval exchange transformation $T$,
the set $\Fac(T)$ is uniformly recurrent.
\end{proposition}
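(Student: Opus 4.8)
The plan is to translate uniform recurrence into a statement about bounded return times of $T$ to the cylinder semi-intervals, and then to extract that bound from minimality. First note that, as recalled above, for a minimal $T$ the set $\Fac(T)=\Fac(\Sigma_T(z))$ does not depend on $z$, so it is the factor set of an infinite word and is in particular right-extendable; moreover $v\in\Fac(T)$ if and only if $I_v\neq\emptyset$. Thus to prove uniform recurrence it suffices to show that for every $w\in\Fac(T)$ there is an integer $N\ge 1$ such that, for every $z\in[0,1)$, the orbit segment $z,T(z),\dots,T^{N-1}(z)$ meets $I_w$. Indeed, granting this, let $v\in\Fac(T)$ with $|v|=N+|w|$; pick $y\in I_v$, so that by \eqref{eqIw} the word $v$ is the prefix of length $N+|w|$ of $\Sigma_T(y)$. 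By the choice of $N$ there is $i<N$ with $T^i(y)\in I_w$, and then \eqref{eqIw} shows that the factor of $v$ of length $|w|$ starting at position $i$ equals $w$. Hence $w$ is a factor of every word of $\Fac(T)$ of length $N+|w|$.

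It remains to produce $N$. Fix $w\in\Fac(T)$; then $I_w=[\gamma_w,\mu_w)$ is a nonempty semi-interval, hence contains a nonempty open interval. Since $T$ is minimal, every forward orbit is dense, so every point of $[0,1)$ enters $I_w$ under some nonnegative power of $T$; consequently the first-return (induced) transformation $T_{I_w}\colon I_w\to I_w$ is everywhere defined. The key structural fact is that, for a minimal interval exchange transformation, $T_{I_w}$ is itself an interval exchange transformation on \emph{finitely many} subintervals $I_w=I^{(1)}\sqcup\cdots\sqcup I^{(r)}$, with the return time equal to a constant $k_j$ on $I^{(j)}$, and that by minimality the associated Rokhlin tower fills the whole interval:
\begin{displaymath}
[0,1)=\bigsqcup_{j=1}^{r}\ \bigsqcup_{i=0}^{k_j-1} T^i\bigl(I^{(j)}\bigr).
\end{displaymath}
Set $N=\max_{1\le j\le r}k_j$. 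Given $z\in[0,1)$, write $z=T^i(x)$ with $x\in I^{(j)}$ and $0\le i<k_j\le N$; since $T^{k_j}(x)\in I_w$, the point $T^{\,k_j-i}(z)$ lies in $I_w$ with $0<k_j-i\le N$. Thus every orbit segment of length $N$ meets $I_w$, which is exactly the statement needed above.

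The only non-routine ingredient is the claim used in the second paragraph: that the induced map of a minimal interval exchange on a semi-interval is again an interval exchange on finitely many pieces, with the induced tower covering $[0,1)$. This is where minimality together with the piecewise-translation structure is genuinely used — it is what excludes an infinite return-time partition or an unbounded return time — and it is standard; I would cite it from the classical references on interval exchange transformations (e.g.\ \cite{CornfeldFominSinai1982}) or from \cite{BertheDeFeliceDolceLeroyPerrinReutenauerRindone2013ie}, where Keane's minimality theorem (Theorem~\ref{theoremKeane}) is also discussed. Everything else is immediate from \eqref{eqIw} and the independence of $\Fac(T)$ from the base point, so the main obstacle is localized precisely in that cited fact about induced transformations.
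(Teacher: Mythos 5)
The paper never proves this proposition itself: it is stated as well known, with the argument deferred to the companion paper \cite{BertheDeFeliceDolceLeroyPerrinReutenauerRindone2013ie}, so there is no in-paper proof to match yours against. On its own terms your sketch is essentially correct and follows the standard route: the reduction of uniform recurrence to a uniform bound $N$ on the entrance time of every orbit into $I_w$ is done correctly (the slip between $T^{N-1}$ and $T^N$ is a harmless off-by-one), and such a bound does follow from the classical fact that the first return map of an interval exchange transformation to a subinterval is again an interval exchange on finitely many subintervals, with return time constant on each piece; this is citable from \cite{CornfeldFominSinai1982}, and since the paper itself only cites, outsourcing this ingredient is reasonable. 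Two points should be tightened, however. The paper defines minimality by density of the two-sided orbits $\{T^n(z)\mid n\in\Z\}$, while you use density of \emph{forward} orbits (to have the induced map defined at every point of $I_w$) and, implicitly, that every \emph{backward} orbit meets $I_w$ (this is what the statement ``the Rokhlin tower fills $[0,1)$'' amounts to); for interval exchange transformations these are all equivalent to minimality, but that equivalence is itself a classical fact that needs a citation or an argument. A cheap way to close the tower claim from the paper's definition: the tower $\Omega=\bigsqcup_{j}\bigsqcup_{0\le i<k_j}T^i\bigl(I^{(j)}\bigr)$ is a finite union of semi-intervals with $T(\Omega)\subset\Omega$, hence $T(\Omega)=\Omega$ because $T$ preserves Lebesgue measure and a nonempty difference of finite unions of semi-intervals has positive measure; the complement of $\Omega$ is then invariant and, if nonempty, contains a point whose full orbit avoids the interior of $I_w$, contradicting minimality as defined. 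With these justifications added (or with a citation covering them), your proof is complete, and it is presumably close in spirit to the one in the cited companion paper.
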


\begin{example}\label{exampleDivision}
Set $\alpha=(3-\sqrt{5})/2$ and $A=\{a,b,c\}$.
Let $T$ be the interval exchange transformation on $[0,1)$ which is the
rotation of angle $2\alpha \bmod 1$ on the three intervals
$I_a=[0,1-2\alpha)$, $I_b=[1-2\alpha,1-\alpha)$, $I_c=[1-\alpha,1)$
(see Figure~\ref{figure3interval2}).
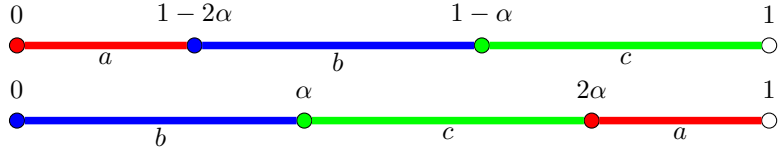
\begin{figure}[hbt]
\centering\gasset{Nh=2,Nw=2,ExtNL=y,NLdist=2,AHnb=0,ELside=r}
\begin{picture}(100,15)
\node[fillcolor=red](0h)(0,10){$0$}
\node[fillcolor=blue](1-2alpha)(23.6,10){$1-2\alpha$}
\node[fillcolor=green](1-alpha)(61.8,10){$1-\alpha$}
\node(1h)(100,10){$1$}
\drawedge[linecolor=red,linewidth=1](0h,1-2alpha){$a$}
\drawedge[linecolor=blue,linewidth=1](1-2alpha,1-alpha){$b$}
\drawedge[linecolor=green,linewidth=1](1-alpha,1h){$c$}

\node[fillcolor=blue](0b)(0,0){$0$}
\node[fillcolor=green](alpha)(38.2,0){$\alpha$}
\node[fillcolor=red](2alpha)(76.4,0){$2\alpha$}\node(1b)(100,0){$1$}
\drawedge[linecolor=blue,linewidth=1](0b,alpha){$b$}
\drawedge[linecolor=green,linewidth=1](alpha,2alpha){$c$}
\drawedge[linecolor=red,linewidth=1](2alpha,1b){$a$}
\end{picture}
\caption{A regular $3$-interval exchange transformation.}\label{figure3interval2}
\end{figure}
The transformation $T$ is regular since $\alpha$ is irrational. The
words of length at most $5$ of the set $S=\Fac(T)$ are represented in Figure~\ref{figureSetF} on the left.
\begin{figure}[hbt]
\centering
\gasset{Nadjust=wh,AHnb=0,ELpos=60,ELdist=.6}
\begin{picture}(120,50)
\put(0,0){
\begin{picture}(50,50)(0,5)
\node(1)(0,30){}\node(a)(10,40){}\node(b)(10,30){}\node(c)(10,20){}

\node(ac)(20,45){}
\node(ba)(20,35){}\node(bb)(20,25){}
\node(cb)(20,20){}\node(cc)(20,10){}

\node(acb)(30,50){}\node(acc)(30,45){}
\node(bac)(30,35){}\node(bba)(30,25){}
\node(cba)(30,20){}\node(cbb)(30,15){}
\node(ccb)(30,10){}

\node(acbb)(40,55){}\node(accb)(40,45){}
\node(bacb)(40,40){}\node(bacc)(40,35){}
\node(bbac)(40,25){}
\node(cbac)(40,20){}\node(cbba)(40,15){}
\node(ccba)(40,10){}\node(ccbb)(40,5){}

\node(acbba)(50,55){}\node(accba)(50,50){}
\node(accbb)(50,45){}
\node(bacbb)(50,40){}\node(baccb)(50,35){}
\node(bbacb)(50,30){}\node(bbacc)(50,25){}
\node(cbacc)(50,20){}
\node(cbbac)(50,15){}
\node(ccbac)(50,10){}\node(ccbba)(50,5){}


\drawedge(1,a){$a$}\drawedge(1,b){$b$}\drawedge(1,c){$c$}
\drawedge(a,ac){$c$}\drawedge(b,ba){$a$}\drawedge(b,bb){$b$}
\drawedge(c,cb){$b$}\drawedge(c,cc){$c$}
\drawedge(ac,acb){$b$}\drawedge(ac,acc){$c$}
\drawedge(ba,bac){$c$}\drawedge(bb,bba){$a$}
\drawedge(cb,cba){$a$}\drawedge(cb,cbb){$b$}
\drawedge(cc,ccb){$b$}
\drawedge(acb,acbb){$b$}\drawedge(acc,accb){$b$}
\drawedge(bac,bacb){$b$}\drawedge(bac,bacc){$c$}
\drawedge(bba,bbac){$c$}
\drawedge(cba,cbac){$c$}
\drawedge(cbb,cbba){$a$}\drawedge(cbba,cbbac){$c$}

\drawedge(acbb,acbba){$a$}\drawedge(accb,accba){$a$}
\drawedge(accb,accbb){$b$}\drawedge(bacb,bacbb){$b$}
\drawedge(bacc,baccb){$b$}
\drawedge(bbac,bbacb){$b$}\drawedge(bbac,bbacc){$c$}
\drawedge(cbac,cbacc){$c$}
\drawedge(cbba,cbbac){$c$}
\drawedge(ccb,ccba){$a$}\drawedge(ccb,ccbb){$b$}

\drawedge(ccba,ccbac){$c$}\drawedge(ccbb,ccbba){$a$}
\end{picture}
}
\put(60,0){
\begin{picture}(50,35)(0,5)
\node(1)(0,20){}

\node(a)(10,30){}\node(b)(10,20){}\node(c)(10,10){}

\node(ac)(20,35){}
\node(bb)(20,25){}\node(bc)(20,20){}
\node(ca)(20,10){}\node(cb)(20,5){}

\node(aca)(30,40){}\node(acb)(30,35){}
\node(bbb)(30,30){}\node(bbc)(30,25){}
\node(bcc)(30,20){}
\node(cac)(30,10){}
\node(cbb)(30,5){}

\drawedge(1,a){$a$}\drawedge[ELpos=60](1,b){$b$}\drawedge[ELpos=60](1,c){$c$}

\drawedge(a,ac){$c$}
\drawedge(b,bb){$b$}\drawedge[ELpos=60](b,bc){$c$}
\drawedge(c,ca){$a$}\drawedge[ELpos=60](c,cb){$b$}

\drawedge(ac,aca){$a$}\drawedge[ELpos=60](ac,acb){$b$}
\drawedge(bb,bbb){$b$}\drawedge(bb,bbc){$c$}
\drawedge(bc,bcc){$c$}
\drawedge(ca,cac){$c$}
\drawedge(cb,cbb){$b$}
\end{picture}
}
\end{picture}
\caption{The words of length $\le 5$ of the set $S$ and 
the words of length $\le 3$ of its derived set.}\label{figureSetF}
\end{figure}
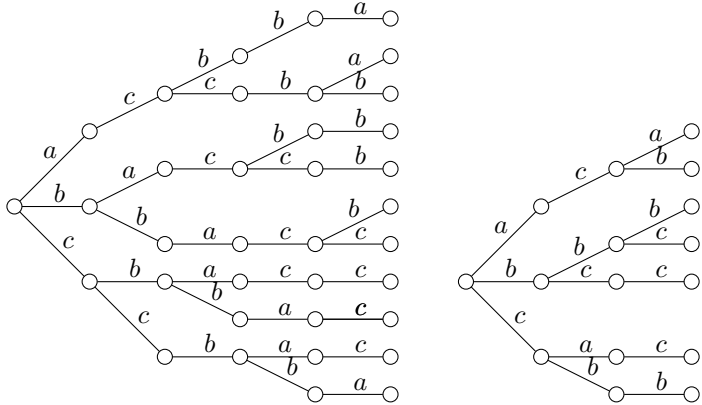
Since $T=R^2$, where $R$ is the transformation of Example~\ref{exampleFiboNatCoding},
the natural coding of $T$ relative to $\alpha$ is
the infinite word $y=\gamma^{-1}(x)$ where $x$ is the Fibonacci word
and $\gamma$ is the morphism defined by $\gamma(a)=aa$, $\gamma(b)=ab$,
$\gamma(c)=ba$. One has
\begin{equation}
y=baccbaccbbacbbacbbacc\cdots \label{Eqy}
\end{equation}
Actually, the word $y$ is the fixed point $g^\omega(b)$ of the morphism
$g:a\mapsto baccb, b\mapsto bacc, c\mapsto bacb$. This  follows from
the fact that the cube of the Fibonacci morphism $f:a\mapsto ab, b\mapsto a$ sends each letter
on a word of odd length and thus sends words of even
length on words of even length.
\end{example}

\section{Return words}\label{sectionReturn}

In this section, we introduce the notion of return and first
return words. We prove elementary results about return words
which essentially already appear in~\cite{Durand1998}.

Let $S$ be a  set of words. For $w\in S$, let
$\Gamma_S(w)=\{x\in S\mid wx\in S\cap A^+w\}$
be the set of \emph{right return words} to $w$ and let
$\RR_S(w)=\Gamma_S(w)\setminus\Gamma_S(w) A^+$ be the 
set of \emph{first right  return words} to $w$.
By definition, the set $\RR_S(w)$ is, for every $w\in S$, a prefix code.
If $S$ is recurrent, it is 
 a $w^{-1}S$-maximal prefix code.

Similarly, for $w\in S$,
we  let $\Gamma'_S(w)=\{x\in S\mid xw\in S\cap wA^+\}$ denote
the set of \emph{left return words} to $w$ and
$\RR'_S(w)=\Gamma'_S(w)\setminus A^+\Gamma'_S(w)$  the 
set of \emph{first left  return words} to $w$.
By definition, the set $\RR'_S(w)$ is, for every $w\in S$, a suffix code.
If $S$ is recurrent, it is an $Sw^{-1}$-maximal suffix code.
 The relation between $\RR_S(w)$
and $\RR'_S(w)$ is simply
\begin{equation}
  w\RR_S(w)=\RR'_S(w)w\,.\label{eqAutomo}
\end{equation}
Let $f:B^*\rightarrow A^*$ be a coding morphism for $\RR_S(w)$. The morphism
$f':B^*\rightarrow A^*$ defined for $b\in B$ by $f'(b)w=wf(b)$
is a coding morphism for $\RR'_S(w)$ called the coding morphism
\emph{associated} with $f$.
\begin{example}
Let $S$ be the uniformly recurrent set of Example~\ref{exampleDivision}.
We have
\begin{displaymath}
\RR_S(a)=\{cbba,ccba,ccbba\},\
\RR_S(b)=\{acb,accb,b\},\
\RR_S(c)=\{bac,bbac,c\}.
\end{displaymath}
These sets can be read from the word $y$ given in Equation~\eqref{Eqy}.
A coding morphism $f:B^*\rightarrow A^*$ with $B=A$ for the set $\RR_S(c)$ is given
by $f(a)=bac$, $f(b)=bbac$, $f(c)=c$.
\end{example}
Note that $\Gamma_S(w)\cup\{1\}$ is right unitary and that
\begin{equation}
\Gamma_S(w)\cup\{1\}=\RR_S(w)^*\cap w^{-1}S. \label{eqGamma1}
\end{equation}
Indeed, if $x\in\Gamma_S(w)$ is not in $\RR_S(w)$, we have $x=zu$
with $z\in\Gamma_S(w)$ and $u$ nonempty. Since $\Gamma_S(w)$ is right
unitary, we have $u\in \Gamma_S(w)$, whence the conclusion by induction
on the length of $x$. The converse inclusion is obvious.
\begin{proposition}\label{propReturnsFinite}
A recurrent set $S$ is uniformly recurrent if and only if
the set $\RR_S(w)$ is finite for all $w\in S$.
\end{proposition}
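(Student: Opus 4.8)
The statement asserts that a recurrent set $S$ is uniformly recurrent if and only if $\RR_S(w)$ is finite for every $w\in S$. The plan is to prove the two implications separately, using the characterization of uniform recurrence in terms of bounded gaps between occurrences of a fixed factor.

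\medskip
\noindent\emph{From uniform recurrence to finiteness.} Assume $S$ is uniformly recurrent and fix $w\in S$. By definition of uniform recurrence applied to the word $w$, there is an integer $n\ge 1$ such that $w$ occurs as a factor of every word of $S$ of length $n$. I would argue that every first right return word $x\in\RR_S(w)$ satisfies $|x|< n$. Indeed, if $x\in\RR_S(w)$ then $wx\in S$, and $wx$ ends with $w$; moreover no proper nonempty prefix of $x$ already closes a return, i.e.\ $w$ does not occur in $wx$ strictly between the initial occurrence and the final one (this is exactly what $x\notin\Gamma_S(w)A^+$ encodes, once one unwinds~\eqref{eqGamma1}). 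If $|x|\ge n$, then the factor of $wx$ of length $n$ starting just after the first letter would be a word of $S$ of length $n$, hence would contain an occurrence of $w$, contradicting the minimality built into the definition of a \emph{first} return word. Hence $\RR_S(w)$ is contained in $\bigcup_{k<n}(A^k\cap S)$, a finite set.

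\medskip
\noindent\emph{From finiteness to uniform recurrence.} Conversely, assume $\RR_S(w)$ is finite for every $w\in S$. Since $S$ is recurrent, it is nonempty and, by the discussion preceding the proposition, $\RR_S(w)$ is a nonempty prefix code and $\Gamma_S(w)\cup\{1\}=\RR_S(w)^*\cap w^{-1}S$ by~\eqref{eqGamma1}. Fix $u\in S$; I want an integer $n$ such that $u$ occurs in every word of $S$ of length $n$. Using recurrence, pick any word $w\in S$ containing $u$ as a factor (e.g.\ $w=u$ itself suffices, but it is cleaner to phrase the bound in terms of $u$). Let $N=\max\{|x|\mid x\in\RR_S(w)\}$, finite by hypothesis, and set $n=|w|+N$. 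Now take any $v\in S$ with $|v|\ge n$. By recurrence again, $v$ is a factor of some longer word of $S$ in which $w$ occurs before and after $v$; more directly, since $S$ is factorial and recurrent one can find an occurrence of $w$ in $S$ to the left of (an occurrence of) $v$, and then reading rightwards, successive first right return words to $w$ tile the region, each of length at most $N$. Consequently, between any position and a position $N$ steps further there is a complete occurrence of $w$; since $|v|\ge |w|+N$, at least one full occurrence of $w$, hence of $u$, falls inside $v$. Thus $u$ is a factor of every word of $S$ of length $n$, and $S$ is uniformly recurrent.

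\medskip
\noindent\emph{Main obstacle.} The delicate point is the converse direction: turning ``all return words are short'' into ``$u$ occurs in every long enough word of $S$'' requires care because an arbitrary word $v\in S$ need not itself contain an occurrence of $w$ to anchor the return-word tiling. The right fix is to embed $v$ in a larger word of $S$ that does contain $w$ on both sides of $v$ — this is exactly what recurrence of $S$ provides via~\cite[Proposition 2.2.1]{BerstelDeFelicePerrinReutenauerRindone2012} — and then use that consecutive occurrences of $w$ in that larger word are at distance at most $N$, forcing an occurrence of $w$ strictly inside $v$ once $|v|$ exceeds $|w|+N$. Getting the bookkeeping on lengths right (so that a \emph{full} occurrence of $w$, not just a suffix or prefix, lands inside $v$) is the only thing that needs attention; everything else is immediate from the definitions and~\eqref{eqGamma1}.
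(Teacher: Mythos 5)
Your proof is correct and follows essentially the same route as the paper: uniform recurrence bounds the lengths of first return words, and conversely, bounded return lengths force consecutive occurrences of $w$ at distance at most $N$ (anchored by recurrence, putting $w$ on both sides of $v$), so every word of length $|w|+N$ contains $w$. The only nit is the strict bound $|x|<n$ in the first direction: when $|w|=1$ a first return word of length exactly $n$ can occur (e.g.\ $aab\in\RR_S(b)$ in the Fibonacci set with $n=3$), and your window argument really yields $|x|\le n$, which is all that is needed for finiteness.
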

\begin{proof} Assume that all sets $\RR_S(w)$ for $w\in S$ are finite.
Let $n\ge 1$.
Let $N$
be the maximal length of the words in $\RR_S(w)$ for a word $w$
of length $n$. Then any word of length $N+n$ contains an occurrence
of $w$ . Indeed, assume that $u$ is a word of length $N+n$
without factor equal to $w$.
Let $r$ be a word  of minimal length such that
$ru$ begins with $w$ and set $ru=ws$. Then $|s|\ge N$ although
$s$ is a proper prefix of a word in $\RR(w)$, a contradiction. Conversely, for $w\in S$, let $N$ be such that $w$ is a
factor of any word
in $S$ of length $N$. Then the words of $\RR_S(w)$ have length at most
$N$.
\end{proof}
Let $S$ be a  recurrent set and let $w\in S$.
Let $f$ be a coding morphism for $\RR_S(w)$.
The set $f^{-1}(w^{-1}S)$, denoted $D_f(S)$, is called the \emph{derived set} of
$S$ with respect to $f$.  Note that if $f'$ is
the coding morphism for $\RR'_S(w)$ associated with $f$,
 then $D_f(S)=f'^{-1}(Sw^{-1})$.

The following result gives an equivalent definition of the derived set.
\begin{proposition}\label{propositionRecurrent}
Let $S$ be a recurrent set. For $w\in S$, let $f$ be a coding
morphism for the set $\RR_S(w)$. Then 
\begin{equation}
D_f(S)=f^{-1}(\Gamma_S(w))\cup \{1\}.\label{eqMagique}
\end{equation}
Moreover the set $D_f(S)$ is recurrent.
\end{proposition}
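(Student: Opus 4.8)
The plan is to prove the two assertions separately, starting with the identity~\eqref{eqMagique}. The key observation is that by definition $D_f(S) = f^{-1}(w^{-1}S)$, so I must show that $f^{-1}(w^{-1}S) = f^{-1}(\Gamma_S(w)) \cup \{1\}$. Since $f$ is a coding morphism for $\RR_S(w)$, it is injective, and its image is $\RR_S(w)^*$; hence $f^{-1}(w^{-1}S)$ is in bijection (via $f$) with $\RR_S(w)^* \cap w^{-1}S$, while $f^{-1}(\Gamma_S(w))$ is in bijection with $\Gamma_S(w) \cap \RR_S(w)^*$. So the whole statement reduces to the set equality
\begin{displaymath}
\RR_S(w)^* \cap w^{-1}S = (\Gamma_S(w) \cap \RR_S(w)^*) \cup \{1\},
\end{displaymath}
and since $\Gamma_S(w) \subseteq w^{-1}S$ and $1 \in \RR_S(w)^* \cap w^{-1}S$, this is immediate from~\eqref{eqGamma1}, which states precisely that $\Gamma_S(w) \cup \{1\} = \RR_S(w)^* \cap w^{-1}S$. (One also needs that $\Gamma_S(w) \subseteq \RR_S(w)^*$, which is again~\eqref{eqGamma1}.) So the first part is essentially a matter of carefully unwinding the definitions against the already-established~\eqref{eqGamma1}.

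For the recurrence of $D_f(S)$, I would first check that $D_f(S)$ is factorial. A nonempty element of $D_f(S)$ has the form $f^{-1}(x)$ with $x \in \Gamma_S(w)$; if $u$ is a factor of $f^{-1}(x)$, then $f(u)$ is a factor of $x$ obtained by cutting at letter boundaries of the $\RR_S(w)$-decomposition, so $f(u) \in \RR_S(w)^*$, and one checks $wf(u) \in S \cap A^+w$ using recurrence of $S$ (or directly: a central block of $x$ between two consecutive occurrences of $w$ is again in $\Gamma_S(w)$ after prepending the appropriate prefix) — more carefully, if $x = x'x''$ with $f^{-1}(x')$ a factor-prefix, then $x'' \in \Gamma_S(w) \cup \{1\}$ since $\Gamma_S(w) \cup \{1\}$ is right unitary, and symmetrically on the left. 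Then for recurrence proper: given $u_1, u_2 \in D_f(S)$, set $v_i = f(u_i) \in \RR_S(w)^*$; then $wv_1, wv_2 \in S$ (with the $v_i w$ pattern), and since $S$ is recurrent there is $t$ with $wv_1 w t w v_2 w \in S$; completing $t$ on the left so that $wv_1 t'$ reaches the next occurrence of $w$, one gets $v_1 t'' v_2 \in \Gamma_S(w) \cap \RR_S(w)^*$, hence $f^{-1}(v_1 t'' v_2) = u_1 f^{-1}(t'') u_2 \in D_f(S)$ gives the required connecting word.

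I expect the main obstacle to be the bookkeeping in the recurrence argument: one must verify that the "connecting" block $t''$ obtained from the recurrence of $S$ genuinely lies in $\RR_S(w)^*$ (i.e., that it decomposes into first return words), which relies on the fact that $\Gamma_S(w)\cup\{1\}$ is right unitary together with~\eqref{eqGamma1}, and on choosing the occurrences of $w$ correctly so that no boundary is missed. The factoriality check has a similar flavor. Everything else is a direct translation through the bijection induced by the coding morphism $f$, and no genuinely new idea beyond~\eqref{eqGamma1} and recurrence of $S$ is needed.
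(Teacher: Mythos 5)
Your proof plan is correct and follows essentially the same route as the paper: \eqref{eqMagique} is obtained by unwinding the definition of $D_f(S)$ against \eqref{eqGamma1}, and recurrence is obtained by applying the recurrence of $S$ to $wf(u_1)$ and $wf(u_2)$ and observing that the connecting block ending at the next occurrence of $w$ lies in $\Gamma_S(w)\subset\RR_S(w)^*$ (the paper takes $t$ with $wf(u_1)twf(u_2)\in S$ and uses the block $tw$). The only blemish is your formula ``$wv_1wtwv_2w\in S$'', which should read $wv_1twv_2\in S$ since $wv_i$ already ends in $w$; your extra factoriality check via right unitarity is sound (the paper leaves it implicit).
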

\begin{proof}
Let $z\in D_f(S)$. Then $f(z)\in w^{-1}S\cap R_S(w)^*$ and thus $f(z)\in \Gamma_S(w)\cup\{1\}$. Conversely, if $x\in \Gamma_S(w)$, then $x\in \RR_S(w)^*$
by Equation~\eqref{eqGamma1} and thus $x=f(z)$ for some $z\in D_f(S)$.
This proves~\eqref{eqMagique}.

Consider two nonempty words
$u,v\in D_f(S)$. By~\eqref{eqMagique}, we have $f(u),f(v)\in \Gamma_S(w)$. 
Since $S$ is recurrent, there is a word $t$ such that $wf(u)twf(v)\in S$.
Then $tw\in\Gamma_S(w)$ and thus $uf^{-1}(tw)v\in D_f(S)$ by~\eqref{eqMagique} again.
This shows that $D_f(S)$ is recurrent.
\end{proof}

Let $S$ be a recurrent set and $x$ be an infinite word such that
$S=\Fac(x)$. Let $w\in S$ and let $f$ be a coding morphism
for the set $\RR_S(w)$.
Since $w$ appears infinitely often in $x$, there
is a unique factorization $x=vwz$ with $z \in \RR_S(w)^\omega$
and $v$ such that $vw$ has no proper prefix ending with $w$.
The infinite word $f^{-1}(z)$ is called the \emph{derived word}
of $x$ relative to $f$, denoted $D_f(x)$. If $f'$ is the coding morphism
for $\RR'_S(w)$ associated with $f$, we have $f^{-1}(z)=f'^{-1}(wz)$
and thus $f,f'$ define the same derived word.

The following  statement results easily from Proposition~\ref{propositionRecurrent}.

\begin{proposition}\label{propositionDerived}
Let $S$ be a recurrent set and let $x$ be a recurrent infinite word such that
$S=\Fac(x)$. Let $w\in S$ and let $f$ be a coding morphism for 
 $\RR_S(w)$. The derived set of $S$ with respect to $f$
is the set of factors of the derived word of $x$ with respect to $f$,
that is, $D_f(S)=\Fac(D_f(x))$.
\end{proposition}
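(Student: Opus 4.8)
The plan is to deduce the result directly from Proposition~\ref{propositionRecurrent} together with the definition of the derived word $D_f(x)$, by comparing factors of $D_f(x)$ with elements of $D_f(S)$. Write $x = vwz$ with $z \in \RR_S(w)^\omega$ as in the definition of $D_f(x)$, so that $D_f(x) = f^{-1}(z)$. The key elementary observation is that a factor of $f^{-1}(z)$ is the $f^{-1}$-image of a factor of $z$ that is a concatenation of complete blocks of $\RR_S(w)$; more precisely, $u \in \Fac(D_f(x))$ if and only if $f(u)$ occurs in $z$ ``aligned on the $\RR_S(w)$-factorization'', i.e.\ there is a factorization $z = z' f(u) z''$ with $z' \in \RR_S(w)^*$ (and automatically $z'', f(u) f^{-1}$-compatible). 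This is immediate from the fact that $\RR_S(w)$ is a prefix code, so the decomposition of $z$ into return words is unique and reading $f^{-1}$ commutes with taking aligned factors.

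Next I would show the two inclusions. For $\Fac(D_f(x)) \subseteq D_f(S)$: if $u \in \Fac(D_f(x))$, then by the observation above $f(u)$ appears in $z$ as a block of return words, so $f(u) \in \RR_S(w)^*$, and since it occurs after an occurrence of $w$ (namely the one in the factorization $x=vwz$) and is followed by at least one more return word (as $z$ is infinite), we get $w f(u) \in S \cap A^+ w$, hence $f(u) \in \Gamma_S(w)$. By~\eqref{eqMagique} this gives $u \in D_f(S)$. For the reverse inclusion $D_f(S) \subseteq \Fac(D_f(x))$: let $u \in D_f(S)$, $u \neq 1$; by~\eqref{eqMagique}, $f(u) \in \Gamma_S(w)$, so $w f(u) \in S \cap A^+ w$ and $f(u) \in \RR_S(w)^*$. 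Since $x$ is recurrent and $S = \Fac(x)$, the word $w f(u) w$ is a factor of $x$; using that $x$ is recurrent I can find an occurrence of $w f(u) w$ inside $x$ starting at a position which is the end of an occurrence of $w$ belonging to the canonical return-word factorization of $z$ — concretely, by recurrence there is $t$ with $w f(u) t w \in S$ and then $wf(u)tw$ occurs in $z$ as a block; this is exactly the argument already used in the proof of Proposition~\ref{propositionRecurrent}. Then $f(u)$ occurs in $z$ aligned on the $\RR_S(w)$-decomposition, so $u \in \Fac(f^{-1}(z)) = \Fac(D_f(x))$. The empty word belongs to both sides trivially.

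The main obstacle I anticipate is the bookkeeping in the ``alignment'' step: one must be careful that an occurrence of $f(u)$ (or of $wf(u)tw$) inside the infinite word $z$ really is a concatenation of whole return words starting at a return-word boundary, rather than straddling such boundaries. The clean way to handle this is to first establish the bijection between $\RR_S(w)^*$-prefixes of $z$ and prefixes of $f^{-1}(z)$ (an immediate consequence of $\RR_S(w)$ being a prefix code and $f$ a coding morphism), and then phrase everything in terms of prefixes: $p$ is a prefix of $D_f(x) = f^{-1}(z)$ iff $f(p)$ is a prefix of $z$ lying in $\RR_S(w)^*$. Combined with recurrence of $x$ (which lets us slide any factor of $S$ to appear after a chosen occurrence of $w$ that lies on the decomposition), the proposition then ``results easily from Proposition~\ref{propositionRecurrent}'' as claimed. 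I would keep the write-up short, citing the prefix-code property and~\eqref{eqMagique} and reusing the sliding argument from Proposition~\ref{propositionRecurrent} rather than repeating it.
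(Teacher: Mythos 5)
Your overall plan is sound and is essentially the intended ``easy'' argument: the characterization of factors of $D_f(x)=f^{-1}(z)$ as $f^{-1}$-images of blocks of the $\RR_S(w)$-factorization of $z$, together with \eqref{eqGamma1} and \eqref{eqMagique}, does give the inclusion $\Fac(D_f(x))\subseteq D_f(S)$ correctly (modulo the small slip that the occurrence of $w$ preceding $f(u)$ is the suffix of $wz'$ with $z'\in\RR_S(w)^*$, not necessarily the occurrence in the factorization $x=vwz$; since $wz'\in A^*w\cap S$ this changes nothing).

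The gap is in the reverse inclusion $D_f(S)\subseteq\Fac(D_f(x))$, precisely at the alignment step you flag as the main obstacle. The prefix-code property of $\RR_S(w)$ only gives uniqueness of the factorization of $z$ and the prefix bijection; it does \emph{not} tell you that an arbitrary occurrence of $w$ inside $wz$ ends at a boundary of that factorization. Recurrence of $x$ gives you an occurrence of $wf(u)$ starting at a position at least $|v|$, hence inside $wz$, but nothing more; and your ``concrete'' justification (by recurrence of $S$ there is $t$ with $wf(u)tw\in S$, ``and then $wf(u)tw$ occurs in $z$ as a block'') is a non sequitur: membership in $S=\Fac(x)$ locates a word somewhere in $x$ but says nothing about its position relative to the return-word boundaries, which is exactly what is at issue (also note $wf(u)$ already ends in $w$, so the word $wf(u)w$ plays no role). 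What is missing is the lemma that uses the \emph{first}-return property rather than prefix-codeness: for $r\in\RR_S(w)$, the word $wr$ has no occurrence of $w$ other than as a prefix and as a suffix (an internal one would give a proper prefix of $r$ in $\Gamma_S(w)$, contradicting $r\notin\Gamma_S(w)A^+$); consequently every prefix of $wz$ ending in $w$ lies in $w\RR_S(w)^*$, i.e.\ every occurrence of $w$ in $wz$ ends at a block boundary. With this lemma the argument closes: recurrence of $x$ yields an occurrence of $wf(u)$ in $wz$, its leading $w$ ends at a boundary, and your prefix-code alignment then shows $f(u)$ is a concatenation of consecutive blocks of $z$, so $u\in\Fac(D_f(x))$. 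So the plan is repairable, but as written the key step rests on an insufficient justification.
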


\begin{example}
Let $S$ be the uniformly recurrent set of Example~\ref{exampleDivision}.
Let $f$ be the coding morphism for the set $\RR_S(c)$ given
by $f(a)=bac$, $f(b)=bbac$, $f(c)=c$. Then the derived set of $S$
with respect to $f$
is represented in Figure~\ref{figureSetF} on the right.
\end{example}

\section{Uniformly recurrent tree sets}\label{sectionTreeNormal}
In this section, we recall the notion of tree set introduced
in~\cite{BertheDeFeliceDolceLeroyPerrinReutenauerRindone2013a}.
We recall that the factor complexity of a tree set
on $k+1$ letters is $p_n=kn+1$.

We recall a result concerning the decoding of tree sets
(Theorem~\ref{InverseImageTree}). We also recall the finite index
basis property of uniformly recurrent tree sets (Theorems~\ref{theoremFIB} and~\ref{theoremGroupCode})
that we will use in Section~\ref{sectionBifixDecoding}. We prove that the
family of uniformly recurrent tree sets
is closed under derivation (Theorem~\ref{propositionReturns}). We
further
prove that all bases of the free group included in a uniformly
recurrent tree set are tame (Theorem~\ref{theoremTame}).

\subsection{Tree sets}
Let $S$ be a fixed factorial set.
For a  word $w\in S$, we
 consider the undirected graph $G(w)$ on the set of vertices
which is the disjoint union of
$L(w)$ and $R(w)$ with edges the pairs 
$(a,b)\in E(w)$. The graph $G(w)$ is called the \emph{extension graph} of $w$
in $S$.
\begin{example}
Let $S$ be the Fibonacci set. The extension graphs of $\varepsilon,a,b,ab$ respectively are
shown in Figure~\ref{FigureExtensionGraph}.
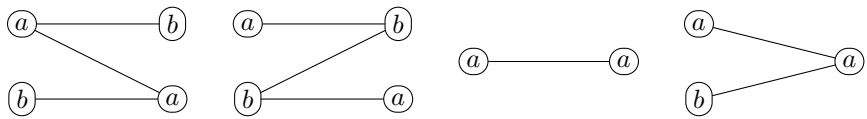
\begin{figure}[hbt]
\centering
\gasset{AHnb=0,Nadjust=wh}
\begin{picture}(110,12)
\put(0,0){
\begin{picture}(20,10)
\node(bL)(0,0){$b$}\node(aL)(0,10){$a$}
\node(bR)(20,10){$b$}\node(aR)(20,0){$a$}

\drawedge(bL,aR){}\drawedge(aL,bR){}\drawedge(aL,aR){}
\end{picture}
}
\put(30,0){
\begin{picture}(20,10)
\node(bL)(0,0){$b$}\node(aL)(0,10){$a$}
\node(bR)(20,10){$b$}\node(aR)(20,0){$a$}

\drawedge(bL,aR){}\drawedge(aL,bR){}\drawedge(bL,bR){}
\end{picture}
}
\put(60,0){
\begin{picture}(20,10)
\node(aL)(0,5){$a$}
\node(aR)(20,5){$a$}

\drawedge(aL,aR){}
\end{picture}
}
\put(90,0){
\begin{picture}(20,10)
\node(aL)(0,10){$a$}\node(bL)(0,0){$b$}
\node(aR)(20,5){$a$}

\drawedge(aL,aR){}
\drawedge(bL,aR){}
\end{picture}
}
\end{picture}
\caption{The extension graphs of $\varepsilon,a,b,ab$ in the Fibonacci set.}\label{FigureExtensionGraph}
\end{figure}

\end{example}

Recall that an undirected graph is a tree if it is connected and acyclic.

We say that $S$ is a \emph{tree set} (resp. an acyclic set)
if it is biextendable
and if for every  word
$w\in S$, the graph $G(w)$ is a tree (resp. is acyclic). 

It is not difficult to verify the following
statement (see~\cite[Proposition 3.3]{BertheDeFeliceDolceLeroyPerrinReutenauerRindone2013a}), which shows that the factor complexity of a tree set
is linear.
\begin{proposition} \label{propositionComplexity}
Let $S$ be a tree set on the alphabet $A$
and let $k=\Card(A\cap S)-1$. Then
$\Card(S\cap A^n)=kn+1$ for all $n\ge 0$.
\end{proposition}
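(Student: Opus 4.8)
The plan is to prove $\Card(S\cap A^n)=kn+1$ by induction on $n$, using the tree condition to control how the number of factors grows from length $n$ to length $n+1$. First I would record the combinatorial identity that relates consecutive levels of complexity: for any factorial biextendable set $S$, and writing $p_n=\Card(S\cap A^n)$, one has
\begin{equation}
p_{n+1}-p_n=\sum_{w\in S\cap A^n}\bigl(r(w)-1\bigr),\label{eqPlanComplexity}
\end{equation}
because each word $w$ of length $n$ has exactly $r(w)$ right extensions $wa\in S$, and summing $r(w)$ over $w\in S\cap A^n$ counts every word of length $n+1$ exactly once (each such word has a unique length-$n$ prefix). A symmetric identity holds with $\ell(w)$ in place of $r(w)$. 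Note also the base case: $p_0=\Card(\{1\})=1=k\cdot 0+1$, and $p_1=\Card(S\cap A^1)=k+1$ by the definition of $k$.

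The key point is then to show that the right-hand side of \eqref{eqPlanComplexity} equals $k$ for every $n\ge 0$, i.e. that
\begin{equation}
\sum_{w\in S\cap A^n}\bigl(r(w)-1\bigr)=k\qquad\text{for all }n\ge 0.\label{eqPlanKey}
\end{equation}
For this I would use the tree hypothesis. For each $w\in S\cap A^n$ the graph $G(w)$ is a tree on the vertex set $L(w)\sqcup R(w)$, which has $\ell(w)+r(w)$ vertices and hence exactly $\ell(w)+r(w)-1$ edges; but the number of edges of $G(w)$ is $e(w)=\Card(E(w))$, so
\begin{displaymath}
e(w)=\ell(w)+r(w)-1\qquad\text{for every }w\in S.
\end{displaymath}
Now sum this over all $w\in S\cap A^n$. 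On one hand $\sum_{w\in S\cap A^n} e(w)$ counts the words of length $n+2$ whose central factor of length $n$ is $w$, i.e. it equals $p_{n+2}$ if we are careful — actually $\sum_{w\in S\cap A^n}e(w)=\Card(S\cap A^{n+2})=p_{n+2}$, since a word of length $n+2$ is determined by its length-$n$ factor in the middle together with the pair of bordering letters. On the other hand $\sum_w \ell(w)=p_{n+1}$ (each length-$(n+1)$ word has a unique length-$n$ suffix) and $\sum_w r(w)=p_{n+1}$ likewise, so summing the edge-count identity gives $p_{n+2}=2p_{n+1}-p_n$. Thus the sequence $(p_n)$ satisfies the linear recurrence $p_{n+2}-p_{n+1}=p_{n+1}-p_n$, meaning the first difference $p_{n+1}-p_n$ is constant; with $p_0=1$ and $p_1=k+1$ this constant is $k$, and therefore $p_n=kn+1$ for all $n\ge 0$.

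The main obstacle — really the only thing needing care — is the bookkeeping in the summation identities $\sum_w \ell(w)=\sum_w r(w)=p_{n+1}$ and $\sum_w e(w)=p_{n+2}$: one must check that these counts are exact bijections (no word of $S\cap A^{n+1}$ or $S\cap A^{n+2}$ is counted twice or omitted), which follows from uniqueness of the relevant prefix/suffix/central factor, together with the observation that if $awb\in S$ then $w,aw,wb\in S$ by factoriality, so every incident vertex and edge of $G(w)$ genuinely arises. One should also note that biextendability is exactly what guarantees $G(w)$ has no isolated vertices, so that "tree on $\ell(w)+r(w)$ vertices" is the right count; acyclicity alone would only give the inequality $p_{n+2}\le 2p_{n+1}-p_n$. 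Everything else is the elementary fact that a tree on $m$ vertices has $m-1$ edges, plus solving a trivial recurrence.
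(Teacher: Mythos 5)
Your proof is correct and follows essentially the same route as the argument the paper relies on (Proposition 3.3 of the cited tree-sets paper): a tree extension graph forces $e(w)=\ell(w)+r(w)-1$ for every $w$, i.e.\ every word is neutral, and summing over $S\cap A^n$ yields the constant second difference $p_{n+2}-2p_{n+1}+p_n=0$, whence $p_n=kn+1$ from $p_0=1$, $p_1=k+1$. The bookkeeping identities $\sum_w r(w)=\sum_w\ell(w)=p_{n+1}$ and $\sum_w e(w)=p_{n+2}$ are justified exactly as you say, by factoriality and uniqueness of the length-$n$ prefix, suffix, or central factor.
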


The following result is also easy to prove.
\begin{proposition}\label{propositionSturmianisNormal}
A Sturmian set $S$ is a uniformly recurrent tree set.
\end{proposition}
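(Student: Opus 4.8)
The plan is to verify the two defining properties of a uniformly recurrent tree set for a Sturmian set $S$, namely (i) that $S$ is uniformly recurrent, and (ii) that $S$ is biextendable and the extension graph $G(w)$ is a tree for every $w \in S$. Part (i) is already recorded in the excerpt: any Sturmian set is uniformly recurrent (the paper cites \cite[Proposition 2.3.3]{BerstelDeFelicePerrinReutenauerRindone2012} for this). In particular $S$ is recurrent, hence biextendable, since every word in a recurrent set extends on both sides. So the real content is part (ii): showing that for each $w \in S$ the graph $G(w)$ on the disjoint union of $L(w)$ and $R(w)$, with edges $E(w)$, is connected and acyclic.

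For this I would exploit the strict episturmian structure directly. Let $x$ be the strict episturmian word with $S = \Fac(x)$. The key combinatorial fact about strict episturmian words is that for each length $n$ there is exactly one left-special factor and exactly one right-special factor of that length, and moreover a word $w$ that is left-special has $\ell(w) = \Card(A)$, and symmetrically a right-special word $w$ has $r(w) = \Card(A)$. I would split into cases according to whether $w$ is bispecial, only left-special, only right-special, or neither. If $w$ is neither left- nor right-special, then $G(w)$ has a single vertex on each side and one edge, hence is trivially a tree. If $w$ is right-special but not left-special, then $\ell(w) = 1$, say $L(w) = \{a\}$; then every edge of $G(w)$ is incident to the unique vertex $a$ on the left side, so $G(w)$ is a star, hence a tree; the symmetric argument handles the left-special-not-right-special case. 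The only delicate case is when $w$ is bispecial. Here I would use the fact (a standard property of episturmian words, provable from the structure of the bispecial factors as images of the letters under the episturmian morphisms, or from the palindromic closure description) that a bispecial factor $w$ of a strict episturmian word satisfies: $w$ has a unique right extension $wb$ which is left-special and a unique left extension $aw$ which is right-special, and $E(w) = (\{a\} \times R(w)) \cup (L(w) \times \{b\})$, with the pair $(a,b) \in E(w)$. In graph terms, $G(w)$ then consists of the vertex $a$ on the left joined to every vertex of $R(w)$, the vertex $b$ on the right joined to every vertex of $L(w)$, sharing the edge $(a,b)$: this is exactly a tree (two stars glued along one edge), with $\Card(L(w)) + \Card(R(w)) - 1$ edges.

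The step I expect to be the main obstacle is pinning down the precise shape of $E(w)$ for bispecial $w$ in the episturmian setting — i.e. justifying that $E(w) = (\{a\} \times R(w)) \cup (L(w) \times \{b\})$ rather than something with more edges. This is where one must invoke the finer structure theory of episturmian words: either the characterization of bispecial factors via iterated palindromic closure, or the description of the Rauzy graph / bispecial evolution under the episturmian morphisms $L_a$. Concretely I would argue that if $awb \in S$ with $a \notin \{a_0\}$ and $b \notin \{b_0\}$ (where $a_0 w$ is the unique right-special and $w b_0$ the unique left-special extension), then both $aw$ and $wb$ would be special with the wrong multiplicities, forcing a second right-special (or left-special) factor of length $|w|+1$, contradicting strictness. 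Alternatively — and this is perhaps the cleanest route given what the paper has already set up — one may simply cite the earlier companion paper: Proposition~\ref{propositionExchangeTreeCondition} and the surrounding development of \cite{BertheDeFeliceDolceLeroyPerrinReutenauerRindone2013a} establish that Sturmian (strict episturmian) sets are tree sets, so the proof reduces to quoting that result together with uniform recurrence. I would present the short citation-based proof as the main line, with the case analysis above as the underlying justification.
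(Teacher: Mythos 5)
Your proposal is correct and follows essentially the same route as the paper: uniform recurrence is quoted, and the tree property is verified by the same case analysis, with the bispecial case settled by the description $E(w)=(\{a\}\times R(w))\cup(L(w)\times\{b\})$ (two stars glued along the edge $(a,b)$), a fact the paper's own proof likewise simply asserts from the strict episturmian structure. Your sketched justification of that fact is sound once phrased carefully — reversal closure and strictness give $L(w)\times\{b\}\subset E(w)$, so any extra edge $(a',b')$ with $a'\neq a$, $b'\neq b$ would make $a'w$ a second right-special word of length $|w|+1$ — so no gap remains.
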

\begin{proof} We have already seen that a Sturmian set is uniformly
  recurrent. Let us show that it is a tree set.
Consider $w\in S$. If $w$
is not left-special there is a unique $a\in A$ such that $aw\in S$.
Then $E(w)\subset \{a\}\times A$ and thus $G(w)$ is a tree. The
case where $w$ is not right-special is symmetrical. Finally, assume
that $w$ is bispecial. Let $a,b\in A$ be such that $aw$ is
right-special
and $wb$ is left-special. Then $E(w)= (\{a\}\times A)\cup (A\times \{b\})$
and thus $G(w)$ is a tree.
\end{proof}
Putting together Proposition~\ref{propositionRegularUR} and
 \cite[Proposition 4.2]{BertheDeFeliceDolceLeroyPerrinReutenauerRindone2013ie}, we have the similar statement.
\begin{proposition}\label{propositionExchangeTreeCondition}
A regular interval exchange set is a uniformly recurrent tree set.
\end{proposition}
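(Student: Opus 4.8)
The plan is to prove the two halves of the statement separately: first that $S=\Fac(T)$ is uniformly recurrent, then that it satisfies the tree condition. The first half is immediate from the material already assembled: by Keane's theorem (Theorem~\ref{theoremKeane}) a regular interval exchange transformation $T$ is minimal, and by Proposition~\ref{propositionRegularUR} the set $\Fac(T)$ of a minimal interval exchange transformation is uniformly recurrent. So the real work is to show that for every $w\in S$ the extension graph $G(w)$ is a tree.

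For this I would translate the extension graph of $w$ into the geometry of the semi-interval $I_w$ defined in~\eqref{eqIu}, which the excerpt already shows to be a genuine semi-interval. Set $m=|w|$. An easy induction (the same one used for the semi-interval property) shows that $T^{m}$ restricted to $I_w$ is a translation. One then checks: $a\in L(w)$ iff $J_a\cap I_w\neq\emptyset$; $b\in R(w)$ iff $T^{m}(I_w)\cap I_b\neq\emptyset$; and $(a,b)\in E(w)$ iff $(J_a\cap I_w)\cap I_{wb}\neq\emptyset$, where $I_{wb}=I_w\cap T^{-m}(I_b)$. Hence $I_w$ carries two partitions into consecutive sub-semi-intervals --- the pieces $J_a\cap I_w$ indexed by $L(w)$ (cut at the left endpoints $\delta_a$ of the $J_a$ falling inside $I_w$), and the pieces $I_{wb}$ indexed by $R(w)$ (cut at the points of $I_w$ sent by $T^{m}$ to the separation points in $\Sep(T)$) --- and $G(w)$ is exactly the bipartite overlap graph of these two partitions, an edge $(a,b)$ occurring precisely when the $a$-piece of the first partition meets the $b$-piece of the second.

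Now the overlap bipartite graph of two partitions of an interval into consecutive subintervals is always a forest: a cycle would force two disjoint subintervals to straddle a common point, which is impossible. It is connected, hence a tree, if and only if the two partitions have no interior cut point in common. So it remains to verify that for every $w\in S$ no point interior to $I_w$ is simultaneously a left endpoint $\delta_a$ of some $J_a$ and a point mapped into $\Sep(T)$ by $T^{m}$. Since $\delta_a=T(\gamma_a)$ with $\gamma_a\in\Sep(T)$, such a coincidence would produce separation points $s,s'\in\Sep(T)$ with $T^{k}(s)=s'$ for some $k\ge 1$ (namely $k=m+1$) and with $T(s)$ lying in the interior of $I_w$. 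If $s$ and $s'$ are both nonzero this contradicts regularity outright: $s'$ would lie in the forward orbit of $s$, forcing either that orbit to be finite (when $s=s'$) or two distinct nonzero separation points to have intersecting orbits.

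The main obstacle is the remaining case where the point $0\in\Sep(T)$ is one of $s,s'$: the orbit of $0$ is never disjoint from the others (one has $T(\mu_i)=0$ for some $i$), so regularity must be combined with the requirement that the coincidence occur in the \emph{interior} of $I_w$ rather than at an endpoint in order to exclude it. This endpoint bookkeeping is exactly the content of \cite[Proposition 4.2]{BertheDeFeliceDolceLeroyPerrinReutenauerRindone2013ie}, which I would invoke; together with Proposition~\ref{propositionRegularUR} it yields the statement. It is worth noting that for a merely minimal (non-regular) transformation the argument above still gives acyclicity of all the graphs $G(w)$, but connectedness --- and hence the full tree property --- genuinely uses regularity.
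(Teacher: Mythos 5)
Your proposal is correct and matches the paper's route: the paper gives no independent argument for this statement, simply combining Proposition~\ref{propositionRegularUR} (uniform recurrence, via Keane's minimality) with \cite[Proposition 4.2]{BertheDeFeliceDolceLeroyPerrinReutenauerRindone2013ie} for the tree condition, which is exactly what you do. Your geometric sketch of why the extension graphs are trees is a reconstruction of the cited proposition's proof rather than a different approach, and your deferral of the endpoint/orbit-of-$0$ bookkeeping to that citation is precisely where the paper places it as well.
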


Proposition~\ref{propositionExchangeTreeCondition} is actually a particular case of a result
of~\cite{FerencziZamboni2008} which characterizes the 
 regular interval exchange sets.

We give two examples of a uniformly recurrent tree set which is neither a Sturmian
set nor an interval exchange set. The first one is a maximal bifix decoding
of a Sturmian set (see Example~\ref{exampleTribonacci2} below).

\begin{example}\label{exampleTribonacci21}
Let $S$ be the Tribonacci set on the alphabet $A=\{a,b,c\}$
(see Example~\ref{exampleTribonacci}). Let $X=A^2\cap S$.
Then $X=\{aa,ab,ac,ba,ca\}$ is an $S$-maximal bifix code
of $S$-degree $2$. Let $B=\{x,y,z,t,u\}$ and let
$f:B^*\rightarrow A^*$ be the morphism defined by $f(x)=aa$,
$f(y)=ab$, $f(z)=ac$, $f(t)=ba$, $f(u)=ca$. Then $f$ is a coding
morphism for $X$. 
We will see that the set $T=f^{-1}(S)$ is a uniformly recurrent tree
 set (this follows from Theorem~\ref{theoremNormal} below).
It is not Sturmian since $y$ and $t$ are two right-special words
of length $1$. It is neither an interval exchange set.
 Indeed, for every right-special word $w$ of $T$, one has $r(w) = 3$. This is not
possible in a regular interval exchange set since, $\Sigma_T$  the length
of the intervals $J_w$ tends to $0$ as $|w|$ tends to infinity.
This implies that  any long enough right-special word $w$ is
such that $r(w)=2$.
\end{example}
The second example is a fixed point of a morphism obtained using
$S$-adic representations of tree sets (see Section~\ref{sectionSadic} below).

\begin{example}\label{exampleJulienLeroy}
Let $A=\{a,b,c\}$ and let $f$ be the morphism from $A^*$ into itself
defined by $f(a)=ac$, $f(b)=bac$, $f(c)=cbac$. Let $S$ be the
set of factors of $f^\omega(a)$. Since $f$ is primitive, $S$ is uniformly
recurrent. The right-special words
are the suffixes of the words $f^n(c)$ for $n\ge 1$ and the left-special words
are the prefixes of the words $f^n(a)$ or $f^n(c)$ for $n\ge 1$, 
as one may verify. 
Any right-special word $w$ is such that $r(w)=3$
and thus $S$ is not an interval exchange set. There are two
left-special words of each length and thus $S$ is not a Sturmian
 set.  Let us show by induction on the length of $w$
that for any bispecial word $w\in S$, the graph $G(w)$ is a tree.
It is true for $w=c$ and $w=ac$. Assume that $|w|\ge 2$. Either
$w$ begins with $a$ or with $c$. Assume the first case. Then $w$ begins
and ends with $ac$. We must have $w=acf(u)$ where $u$ is a bispecial word
beginning and ending with $c$.  In the second case, $w$ begins with
$cbac$ and ends with $ac$. We must have $w=cbacf(u)$ where
$u$ is a bispecial word beginning with $a$. In both cases,
by induction hypothesis, $G(u)$ is a tree
and thus $G(w)$ is a tree. This method
for computing the bispecial factors has been developed for a large
class of morphisms
in~\cite{Klouda2012}, inspired by Cassaigne's work~\cite{Cassaigne1997}.
The fact that $S$ is a tree set is also a consequence of
the results of~\cite{Leroy2014bis}.
\end{example}

Let $S$ be a set of words. For $w\in S$, and $U,V\subset S$,
let 
$U(w)=\{\ell\in U\mid \ell w\in S\}$
 and let $V(w)=\{r\in V\mid wr\in S\}$.
The \emph{generalized extension graph} of $w$ relative to
$U,V$ is the following undirected graph $G_{U,V}(w)$. The set of vertices is
made of two disjoint copies of $U(w)$ and $V(w)$.
 The edges are the pairs $(\ell,r)$
for $\ell\in U(w)$ and $r\in V(w)$
such that $\ell wr\in S$. The extension graph $G(w)$ defined previously
corresponds
to the case where $U,V=A$.

The following result is proved in~\cite[Proposition 3.9]{BertheDeFeliceDolceLeroyPerrinReutenauerRindone2013a}.
\begin{proposition}\label{PropStrongTreeCondition}
Let $S$ be a tree set.  For any $w\in S$, any
 finite 
$S$-maximal suffix code  $U\subset S$ 
and any  finite  $S$-maximal prefix code $V\subset S$,
 the generalized extension graph $G_{U,V}(w)$ is  a tree.
\end{proposition}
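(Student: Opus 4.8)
The plan is to reduce the general case to the ordinary extension graph $G(w)$ (which is a tree by hypothesis, since $S$ is a tree set) by an induction that removes one word at a time from the codes $U$ and $V$. First I would observe that it suffices to treat the case where $V = A$, that is, where we only enlarge the left side to a general $S$-maximal suffix code $U$; the symmetric argument then handles enlarging the right side, and the two can be composed since enlarging on the left does not interfere with the right-hand vertex set. So the core task is: given that $G_{U',A}(w)$ is a tree for some $S$-maximal suffix code $U'$, show $G_{U,A}(w)$ is a tree when $U$ is obtained from $U'$ by a single ``elementary'' refinement step.

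The key step is to identify the right elementary move on suffix codes. Since $A$ itself is an $S$-maximal suffix code and $U$ is a finite $S$-maximal suffix code contained in $S$, one can pass from $A$ to $U$ by a finite sequence of steps, each of which replaces a single word $u$ in the current code by the set $\{au \mid a \in A,\ au \in S\}$ of its left-extensions in $S$ (this is the standard way a maximal suffix code is built up, dually to the fact that a maximal prefix code is obtained by iteratively ``expanding'' a leaf of the literal tree; one must check this expansion keeps the code $S$-maximal and eventually reaches $U$). So I would prove: if $U = (U' \setminus \{u\}) \cup \{au \mid a\in A,\ au\in S\}$ and $G_{U',A}(w)$ is a tree, then $G_{U,A}(w)$ is a tree. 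For this, note that the vertex $u$ of $G_{U',A}(w)$ (if $uw \in S$) is split in $G_{U,A}(w)$ into the vertices $au$ for those $a$ with $auw \in S$, and for each such $au$ the right-neighbours of $au$ in $G_{U,A}(w)$ are exactly the $r \in A$ with $auwr \in S$. The local picture around $u$ and its expansion is governed by the extension graph $G(uw)$ of the longer word $uw$ in $S$: since $S$ is a tree set, $G(uw)$ is a tree, which says precisely that the bipartite graph on $\{a : auw\in S\} \cup \{r : uwr \in S\}$ with edges $auwr \in S$ is connected and acyclic. A gluing argument — replace the single vertex $u$ and its incident edges in the tree $G_{U',A}(w)$ by the tree $G(uw)$, identifying along the common right-vertex set $\{r : uwr\in S\}$ — then shows the result is again connected and acyclic, hence a tree. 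Here one uses that the right-neighbours of $u$ in $G_{U',A}(w)$ are exactly the $r$ with $uwr\in S$, i.e.\ exactly the right-vertices of $G(uw)$, so the identification is along a full matching of leaves/vertices and the standard fact that gluing two trees along a subtree that is a single shared vertex — or more carefully, substituting a tree for a vertex while preserving the incidence of edges — yields a tree.

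The main obstacle I expect is making the gluing step rigorous: when the vertex $u$ is expanded into several vertices $au$, the edges formerly incident to $u$ get redistributed among the $au$, and one must verify that no cycle is created and that connectedness is preserved. The cleanest way is a counting argument combined with connectedness: by Proposition~\ref{propositionComplexity}-type reasoning (or directly), a finite graph is a tree iff it is connected and has (number of vertices) $-1$ edges; I would track how many vertices and edges are gained in the expansion step and check the edge count stays exactly one less than the vertex count, then separately argue connectedness is maintained because every new vertex $au$ has at least one right-neighbour (as $auw \in S$ implies $auw$ is right-extendable, $S$ being biextendable) and the tree-ness of $G(uw)$ links all the $au$ together through their shared right-neighbours, which in turn are attached to the rest of $G_{U',A}(w)$ through the (unchanged) edges at those right-vertices. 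Once the one-step refinement is handled, the full statement follows by induction on the number of refinement steps needed to reach $U$ from $A$, and then the symmetric induction on the right side to reach $V$ from $A$, giving that $G_{U,V}(w)$ is a tree.
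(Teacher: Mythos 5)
This proposition is not proved in the present paper: it is quoted from \cite[Proposition 3.9]{BertheDeFeliceDolceLeroyPerrinReutenauerRindone2013a}, and your plan is essentially the proof given there — pass from $A$ to a general finite $S$-maximal suffix (resp.\ prefix) code by elementary refinements replacing a word $u$ by $Au\cap S$ (resp.\ $v$ by $vA\cap S$), and at each refinement substitute for the deleted vertex the tree given by the extension graph of the longer word, checking by a gluing (or vertex/edge count plus connectedness) argument that the result is again a tree. The one point to phrase more carefully is the composition of the two stages: in the second induction, where $U$ is already a general suffix code and you split $v$ into $vA\cap S$, the local graph that must be a tree is $G_{U,A}(wv)$, so what is needed is the first-stage statement applied to the word $wv$ with the same code $U$ — not merely that ``enlarging on the left does not interfere with the right-hand vertex set''; since stage one is established for every $w\in S$ and every finite $S$-maximal suffix code, this ingredient is available and your argument goes through, exactly as in the lemma that the cited proof isolates for this purpose.
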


Let $S$ be a recurrent set and let $f$ be a coding morphism for a
finite $S$-maximal bifix code. The set $f^{-1}(S)$ is called
a \emph{maximal bifix decoding} of $S$.

The  following result is  in~\cite[Theorem 3.13]{BertheDeFeliceDolceLeroyPerrinReutenauerRindone2013a}.
\begin{theorem}\label{InverseImageTree}
Any maximal bifix decoding of a recurrent tree set is a tree set.
\end{theorem}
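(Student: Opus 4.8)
The plan is as follows. Let $S$ be a recurrent tree set, let $X$ be a finite $S$-maximal bifix code, let $f:B^*\rightarrow A^*$ be a coding morphism for $X$, and put $T=f^{-1}(S)$. The first step is to record the easy structural facts about $T$. It is factorial: if $w=w_1w_2w_3\in T$ then $f(w_1)f(w_2)f(w_3)=f(w)\in S$, hence $f(w_2)\in S$ by factoriality of $S$, so $w_2\in T$. Moreover $\varepsilon\in T$, and since $S$ is recurrent the code $X$ is at the same time an $S$-maximal prefix code and an $S$-maximal suffix code, hence both right and left $S$-complete.

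The core of the proof is to identify extension graphs in $T$ with generalized extension graphs in $S$. Fix $w\in T$ and set $u=f(w)\in S$. For $b,b'\in B$ one has $bw\in T\iff f(b)u\in S$, $wb\in T\iff uf(b)\in S$ and $bwb'\in T\iff f(b)uf(b')\in S$; since $f$ is a bijection from $B$ onto $X$, the map $b\mapsto f(b)$ is therefore an isomorphism from the extension graph $G_T(w)$ onto the generalized extension graph $G_{X,X}(u)$ of $u$ in $S$, taken relative to the $S$-maximal suffix code $X$ and the $S$-maximal prefix code $X$. Proposition~\ref{PropStrongTreeCondition} then gives that $G_{X,X}(u)$ is a tree, and hence $G_T(w)$ is a tree.

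What remains is to check that $T$ is biextendable; together with the preceding paragraphs this finishes the proof, since a biextendable factorial set all of whose extension graphs are trees is by definition a tree set. As $G_T(w)\cong G_{X,X}(u)$ is already known to be connected, it is enough, for each $w\in T$, to exhibit one left extension and one right extension of $w$ inside $T$. For the right one: using that $S$ is right-extendable, extend $u$ to $uv\in S$ with $|v|$ greater than the maximal length $m$ of a word of $X$; by right $S$-completeness write $uv$ as a prefix of some $z=x_1\cdots x_n\in X^*$. The key point is that the occurrence of $u$ in $z$ must fall on a factorization point of $z$: if $u$ ended strictly inside some $x_i$, then, comparing the decomposition $u=f(b_1)\cdots f(b_k)$ coming from $w=b_1\cdots b_k$ with the prefixes $x_1x_2\cdots$ and using that $X$ is a prefix code, some $f(b_j)$ would have to be a proper prefix of some $x_i\in X$, a contradiction. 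Hence $u=x_1\cdots x_i$ with $i<n$, and $ux_{i+1}$ has length at most $|u|+m<|uv|$, so it is a prefix of $uv$ and therefore lies in $S$, which yields the right extension $wf^{-1}(x_{i+1})\in T$; the left extension is symmetric. I expect this last alignment argument to be the only genuinely delicate step — everything else is a direct unwinding of the definitions together with a single appeal to Proposition~\ref{PropStrongTreeCondition}.
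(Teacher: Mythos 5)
Your proof is correct and follows essentially the route the paper intends: the statement is quoted here from~\cite[Theorem 3.13]{BertheDeFeliceDolceLeroyPerrinReutenauerRindone2013a}, whose proof is precisely the identification of the extension graph of $w$ in $f^{-1}(S)$ with the generalized extension graph $G_{X,X}(f(w))$ in $S$ and an appeal to Proposition~\ref{PropStrongTreeCondition}, with $X$ being both an $S$-maximal prefix and suffix code because $S$ is recurrent. Your additional alignment argument for biextendability (factorizations in $X^*$ synchronizing with the prefix/suffix code structure) is sound and fills in the same routine verification.
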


We have no example of a maximal bifix decoding of a recurrent tree
set which is not recurrent (in view of Theorem~\ref{theoremNormal}
to be proved hereafter, such a set would be the decoding of a
recurrent
tree set which is not uniformly recurrent).

\subsection{The finite index basis property}\label{sectionNormal}

Let $S$ be a recurrent set containing the alphabet $A$.
We say that $S$ has the \emph{finite index basis property} if the
following holds.
A  finite bifix code $X\subset S$ is an  $S$-maximal bifix code of $S$-degree
$d$ if and only if it is a basis of a subgroup of index $d$ of the
free group on $A$.

We recall the main result of
~\cite[Theorem 4.4]{BertheDeFeliceDolceLeroyPerrinReutenauerRindone2013}.
\begin{theorem}\label{theoremFIB}
A uniformly recurrent tree set containing the alphabet $A$
has the finite index basis property.
\end{theorem}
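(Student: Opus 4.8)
The plan is to prove the two implications of the finite index basis property, the easy one being that a basis of an index-$d$ subgroup of the free group $F_A$ that happens to lie in $S$ is an $S$-maximal bifix code of $S$-degree $d$, and the substantial one the converse. For the easy direction, let $X\subset S$ be a basis of a subgroup $H$ of index $d$ of $F_A$. By Proposition~\ref{propositionGroupAutomaton}, $X$ is a group code of degree $d$: one has $X^*=H\cap A^*$, this submonoid is recognized by a group automaton with $d$ states, and $X$ is a maximal bifix code of $A^*$ of degree $d$. Being maximal in $A^*$, it is a fortiori $S$-maximal; and since the group automaton is finite and $S$ is recurrent (hence contains arbitrarily long words, which visit every state), $d_X(w)=d$ for $w\in S$ long enough, so that $d_X(S)=d$.

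For the converse, let $X\subset S$ be an $S$-maximal bifix code of $S$-degree $d$, set $k=\Card(A)-1$, and let $H=\langle X\rangle\le F_A$. The first step is the cardinality identity $\Card(X)=dk+1$: it follows by counting the words of $S$ of a fixed length according to their number of parses, using the linear factor complexity $p_n=kn+1$ of a tree set (Proposition~\ref{propositionComplexity}) together with the description $I(X)=\{w\in S\mid d_X(w)<d\}$ of the internal factors of $X$ (Equation~\eqref{eqInternal}). Since, by Nielsen--Schreier, a subgroup of index $d$ of a free group of rank $k+1$ has rank $dk+1$, it then suffices to prove that $[F_A:H]=d$: the generating set $X$ of $H$, having cardinality equal to $\rank(H)$, will automatically be a basis.

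The core of the proof is thus to produce from $X$ a subgroup of index exactly $d$, and I would do this by building a transitive permutation action of $A^*$ on a $d$-element set out of the parses of words of $S$. Fix $w\in S$ with $d_X(w)=d$ (for instance a longest word of $X$, which has $d$ parses); the $d$ parses of $w$ serve as $d$ ``cosets'' on which letters act by reading, and one pushes $w$ to longer and longer words of $S$ to make the action global. Well-definedness and, above all, transitivity of this action is where the tree hypothesis enters: the connectedness of the generalized extension graphs of the bispecial factors occurring inside $w$, in the strong form of Proposition~\ref{PropStrongTreeCondition} applied to suitable $S$-maximal prefix and suffix codes built from $X$, forces distinct parses to become linked; the acyclicity half of the tree condition then ensures that the resulting Schreier (Stallings) graph carries no spurious identifications, so that the subgroup it defines is exactly $H$ and has $d$ vertices, i.e. index $d$. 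The main obstacle is precisely this dictionary between the combinatorics of parses and extension graphs and the coset structure of $H$ — making the action well defined on a set of exactly $d$ elements and extracting transitivity from the tree condition — the Nielsen--Schreier rank bookkeeping and the easy implication being routine. (An alternative, closer to the Sturmian case, would argue by induction on $d$, using at the inductive step that the set of first return words to a word is a basis of $F_A$ in a uniformly recurrent tree set and that this class is stable under derivation.)
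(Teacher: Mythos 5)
First, note that this paper does not prove Theorem~\ref{theoremFIB} at all: it is imported verbatim from the companion paper cited as \cite[Theorem 4.4]{BertheDeFeliceDolceLeroyPerrinReutenauerRindone2013}, so there is no internal proof to match your sketch against; what can be judged is whether your argument stands on its own, and it does not. Your ``easy'' direction is incorrect as stated. From the hypothesis that $X\subset S$ is a basis of a subgroup $H$ of index $d$ you conclude $X^*=H\cap A^*$ and hence, via Proposition~\ref{propositionGroupAutomaton}, that $X$ is a maximal bifix code of $A^*$ of degree $d$. But a basis of $H$ need not generate the submonoid $H\cap A^*$: take $S$ the Fibonacci set and $X=S\cap A^2=\{aa,ab,ba\}$, a basis of the index-$2$ subgroup of even-length elements; here $H\cap A^*$ is generated by the group code $A^2\ni bb\notin S$, so $X^*\subsetneq H\cap A^*$ and $X$ is not $A^*$-maximal (it is only $S$-maximal). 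This direction genuinely needs the machinery for tree sets (freeness of bifix codes in acyclic sets, saturation as in Theorem~\ref{propositionHcapF}, and the cardinality formula $\Card(X)=d(\Card(A)-1)+1$); it cannot be read off Proposition~\ref{propositionGroupAutomaton}. Beware also of circularity: Theorem~\ref{theoremGroupCode}, which looks tempting here, is itself deduced from Theorem~\ref{theoremFIB}.

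For the substantial direction, the bookkeeping you do (counting parses against $p_n=kn+1$ to get $\Card(X)=dk+1$, Nielsen--Schreier, and the fact that a generating set of cardinality equal to the rank of a free group is a basis) is fine, but the core step --- producing from the $d$ parses of a long word a well-defined, transitive action of $A^*$ on exactly $d$ points whose point stabilizer is exactly $\langle X\rangle$ --- is precisely the content of the theorem, and you leave it as ``the main obstacle''. Nothing in the sketch shows the action is well defined (different long words could force incompatible identifications of parses), nor that transitivity follows from Proposition~\ref{PropStrongTreeCondition}, nor, crucially, that the stabilizer equals $\langle X\rangle$ rather than merely containing it; even granting $[F_A:\Stab]=d$, the inclusion $\langle X\rangle\subset\Stab$ together with $\rank\langle X\rangle=dk+1$ does not force equality, since a subgroup of infinite index can have that rank. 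In the source paper the proof instead goes through the Return Theorem (Theorem~\ref{theoremJulien}), the saturation and freeness results for acyclic sets, the cardinality theorem, and a coset-automaton argument exploiting connectedness of the extension graphs; your parenthetical alternative (induction using return words and closure under derivation, as in Theorem~\ref{propositionReturns}) is much closer to a workable route, but as written it is only a one-line remark, not a proof.
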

Recall from Section~\ref{subsectionautomata}
that a \emph{group code} of degree $d$ is a bifix code $X$ such that
$X^*=\varphi^{-1}(H)$ for a surjective morphism
$\varphi:A^*\rightarrow G$ from $A^*$ onto a finite group $G$
and a subgroup $H$ of index $d$ of $G$.

We will use the following result. It is stated for a Sturmian set $S$
in~\cite[Theorem 7.2.5]{BerstelDeFelicePerrinReutenauerRindone2012} 
but the proof only uses the fact that $S$ is uniformly
recurrent and satisfies the finite index basis property.
We reproduce the proof for the sake of clarity.

For a set of words $X$, we denote by  $\langle X\rangle$ the subgroup
of the free group on $A$ generated by $X$. The free group on $A$
itself is  denoted  $F_A$.

\begin{theorem}\label{theoremGroupCode}
Let $Z\subset A^+$ be a group code of degree $d$.
For every  uniformly recurrent tree set $S$ containing the alphabet $A$,
 the set $X=Z\cap S$ is a basis
of a subgroup of index $d$ of $F_A$. 
\end{theorem}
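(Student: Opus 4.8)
The plan is to prove that $X=Z\cap S$ is a finite $S$-maximal bifix code of $S$-degree $d$, and then invoke the finite index basis property (Theorem~\ref{theoremFIB}) to conclude that $X$ is a basis of a subgroup of index $d$ of $F_A$. First I would recall that, by Proposition~\ref{propositionGroupAutomaton}, $Z$ is recognized by a group automaton $\A=(Q,1,1)$ with $d$ states, so that the action $\varphi_\A(a)$ of each letter is a permutation of $Q$ and $Z^*=\{w\in A^*\mid 1\cdot w=1\}$. Since $Z$ is a bifix code, $X=Z\cap S$ is also a bifix code, and it is contained in $S$. It remains to check two things: that $X$ is $S$-maximal (equivalently, since $S$ is recurrent, an $S$-maximal prefix code, by the result quoted from~\cite[Theorem 4.2.2]{BerstelDeFelicePerrinReutenauerRindone2012}), and that its $S$-degree equals $d$.

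For $S$-maximality, the key step is to use uniform recurrence of $S$ together with the fact that $Z$ is a group code. Given $w\in S$, I want to find a word $u$ such that $wu\in X$, i.e. $wu\in S$ and $1\cdot wu=1$ in $\A$. Let $q=1\cdot w$. Because the action of $A^*$ on $Q$ is by permutations generated by a finite set, there is a word $v$ (for instance a suitable power, or a product realizing the inverse permutation of $q\mapsto q\cdot v'$ for a connecting word $v'$) with $q\cdot v=1$; in fact by recurrence of $S$ one can choose the "return" to state $1$ inside $S$: since $S$ is uniformly recurrent and $w$ extends arbitrarily far to the right within $S$, and the state sequence visits $1$ infinitely often along any sufficiently long word of $S$ through $w$ (this uses that $Z\cap S$ is nonempty, which follows because $S$ contains $A$ and $Z$, being a maximal bifix code, is right complete — one has to argue $Z\cap S\neq\emptyset$ carefully), we get $wu\in S$ with $1\cdot wu=1$. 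Then $wu\in Z^*$; taking the shortest prefix of $wu$ longer than $w$ lying in $Z^*$, or rather using that $Z^*\cap S$ has bounded "gaps" by uniform recurrence, we obtain that $w$ is a prefix of a word of $(Z\cap S)^* = X^*$, so $X$ is right $S$-complete, hence $S$-maximal.

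For the degree, I would compute $d_X(S)$ by counting, for a long word $w\in S$, the number of suffixes of $w$ having no prefix in $X$; by~\cite[Proposition 6.1.6]{BerstelPerrinReutenauer2009} this equals the number of parses. The natural bijection is between such suffixes and the states of $Q$: reading $w$ from left to right in $\A$, the positions where one is "between two $Z^*$-blocks relative to the rest of $w$" correspond to the $d$ distinct states, since along a long enough word of the uniformly recurrent set $S$ every state of the (transitive, as $\A$ is trim) group automaton is attained, and attained in a way that extends to a word of $X^*$ by the completeness argument above; no state is attained "too sparsely" precisely because of uniform recurrence. This shows $d_X(S)=d$. Finally, Theorem~\ref{theoremFIB} applies: $X$ is an $S$-maximal bifix code of $S$-degree $d$, hence a basis of a subgroup of $F_A$ of index $d$.

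The main obstacle I anticipate is the degree computation — showing $d_X(S)=d$ rather than merely $d_X(S)\le d$ or $d_X(S)\ge$ something. The upper bound $d_X(S)\le d$ should be soft (parses inject into states via the automaton $\A$ recognizing $Z^*\supseteq X^*$), but the matching lower bound requires producing a word of $S$ all $d$ of whose relevant states are genuinely "separated" by $X$, which is exactly where uniform recurrence and the transitivity of the group automaton must be combined; alternatively one bypasses this by noting $X\subseteq Z$ gives $X^*\subseteq Z^*$ with $\langle X\rangle\subseteq\langle Z\rangle$ and the index can only go up, then pins it down using that $X$ is $S$-maximal of some finite degree $e$, which by Theorem~\ref{theoremFIB} is the index, and a separate argument (e.g. via $\alp_Z(X)=Z$ and Proposition~\ref{prop266}, or a direct counting) forces $e=d$.
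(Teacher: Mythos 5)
Your overall architecture (show $X=Z\cap S$ is a finite $S$-maximal bifix code of $S$-degree $d$, then apply Theorem~\ref{theoremFIB}) is the right frame, and your ``bypass'' at the end is in fact exactly the paper's proof: one gets $e=d_X(S)\le d$ softly, Theorem~\ref{theoremFIB} identifies $e$ with the index of $\langle X\rangle$, and $\langle X\rangle\subset\langle Z\rangle$ forces the index $e$ to be a multiple of $d$, whence $e=d$. (Your suggested ``separate argument via $\alp_Z(X)=Z$ and Proposition~\ref{prop266}'' adds nothing here; the divisibility of indices, which you do mention as ``the index can only go up,'' is the whole point.) But your \emph{primary} route has genuine gaps. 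For the exact degree, the claim that along a long enough word of $S$ ``every state of the group automaton is attained'' in the relevant way is precisely the nontrivial fact; it is essentially equivalent to Proposition~\ref{propositionGroup} ($\varphi(S)=G$), which the paper proves \emph{after} and \emph{using} Theorem~\ref{theoremGroupCode}, so taken as the main argument it is circular in spirit and in any case not established by what you wrote. Similarly, your argument for right $S$-completeness of $X$ fails as stated: from uniform recurrence and $Z\cap S\ne\emptyset$ you infer that ``the state sequence visits $1$ infinitely often along any sufficiently long word of $S$ through $w$,'' but an occurrence of a factor $z\in Z\cap S$ inside a word read from a state $q\ne 1$ gives no information — in a group automaton $1\cdot z=1$ while $q\cdot z$ can be arbitrary — so the run need not pass through the initial state at all. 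Producing, inside $S$, a right extension of $w$ landing in $Z^*$ is exactly the content of the cited result [Theorem 4.2.11] of the 2012 paper, which the paper invokes for both the $S$-maximality of $X$ and the bound $e\le d$; it is not recovered by your sketch.

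Two further points. First, Theorem~\ref{theoremFIB} is stated for \emph{finite} bifix codes, and $Z$ itself may well be infinite (e.g.\ the group code over $\{a,b\}$ counting occurrences of $a$ modulo $2$), so the finiteness of $X=Z\cap S$ is not automatic; the paper gets it from uniform recurrence via [Theorem 4.4.3] of the 2012 paper, and your proposal never addresses it. Second, your ``soft'' upper bound $d_X(S)\le d$ is correct, but the clean justification is not an injection of parses into states of the automaton for $Z^*$; rather, for $w\in S$ every suffix of $w$ having a prefix in $Z$ has that prefix in $S$ (factoriality), hence in $X$, so the suffixes of $w$ with no prefix in $X$ coincide with those with no prefix in $Z$, giving $d_X(w)=d_Z(w)\le d$. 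With the maximality and finiteness of $X$ taken from the cited results (rather than from your state-visiting argument) and the degree pinned down by the index-divisibility argument you sketch at the end, the proof closes and coincides with the paper's.
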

\begin{proof}
By~\cite[Theorem 4.2.11]{BerstelDeFelicePerrinReutenauerRindone2012},
the code $X$ is an $S$-maximal bifix code of $S$-degree $e\le d$. 
Since $S$ is a uniformly recurrent, by~\cite[Theorem
4.4.3]{BerstelDeFelicePerrinReutenauerRindone2012}, $X$ is
finite.
 By Theorem~\ref{theoremFIB},
$X$ is a basis of a subgroup of index
$e$.
Since $\langle X\rangle\subset \langle Z\rangle$, the index $e$
of the subgroup $\langle X\rangle$ is a multiple of the index $d$
of the subgroup $\langle Z\rangle$. Since $e\le d$,
this implies that $e=d$.
\end{proof}

As an example of this result, if $S$ is a uniformly recurrent tree set, then
$S\cap A^n$ is a basis of the subgroup  of the free group which is the
kernel of the morphism onto $\Z/n\Z$ sending any letter to $1$. 

We will use the following results
from~\cite{BertheDeFeliceDolceLeroyPerrinReutenauerRindone2013a}.
The first one is~\cite[Theorem 4.5]{BertheDeFeliceDolceLeroyPerrinReutenauerRindone2013a}.
\begin{theorem}\label{theoremJulien}
Let $S$ be a uniformly recurrent tree set containing the
alphabet $A$. For any word $w\in S$,
the set $\RR_S(w)$ is a basis of the free group on $A$.
\end{theorem}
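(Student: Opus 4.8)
The plan is to fix a coding morphism $f\colon B^*\to A^*$ for $\RR_S(w)$ and to show that, extended to a homomorphism $\widehat f\colon F_B\to F_A$ of free groups, it is an isomorphism. Since $\RR_S(w)=\widehat f(B)$ is then the image of the basis $B$ of $F_B$, it is a basis of $F_A$, as desired. Because $S$ is uniformly recurrent, $\RR_S(w)$ is finite by Proposition~\ref{propReturnsFinite}, so $B$ is finite and both groups are free of finite rank; it therefore suffices to prove separately that $\widehat f$ is \emph{surjective}, i.e.\ $\langle\RR_S(w)\rangle=F_A$, and \emph{injective}, i.e.\ $\RR_S(w)$ is a free subset of $F_A$ (the equality $\Card\RR_S(w)=\Card A$ will then be automatic). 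Throughout I would use that $\RR_S(w)$ is a $w^{-1}S$-maximal prefix code, that $w^{-1}S$ is recurrent, and the identity $w\RR_S(w)=\RR'_S(w)w$ of~\eqref{eqAutomo}.

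I would prove both properties by induction on $|w|$, enlarging $w$ one letter at a time on the right along its prefixes $\varepsilon=u_0,u_1,\dots,u_{|w|}=w$. The base case $w=\varepsilon$ is trivial: $\RR_S(\varepsilon)=A$ is a basis of $F_A$. For the inductive step from $u$ to $ub$ (with $ub\in S$), fix a one-sided infinite word $x$ with $\Fac(x)=S$; the occurrences of $ub$ in $x$ are exactly the occurrences of $u$ followed by $b$, so reading from one occurrence of $ub$ to the next and cutting at the intermediate occurrences of $u$ exhibits each first return word to $ub$ as a product $r_{i_1}\cdots r_{i_t}$ of first return words to $u$. If $u$ is \emph{not} right-special then $b$ is forced, every occurrence of $u$ is followed by $b$, no cutting occurs, and one checks (using that $u$ is then always followed by $b$, together with~\eqref{eqAutomo}) that $\RR_S(ub)=b^{-1}\RR_S(u)\,b$ as subsets of $F_A$; conjugation preserves the property of being a basis, so there is nothing more to do in this case.

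The real content is the step where $u$ is right-special. Now several first return words to $u$ genuinely merge, and the admissible merging patterns $(i_1,\dots,i_t)$ are controlled by the extension graph $G(u)$ — more precisely by a generalized extension graph $G_{U,V}(u)$, with $U$ an $S$-maximal suffix code and $V$ an $S$-maximal prefix code chosen so that its edges record exactly which return words to $u$ may be followed by which, to which Proposition~\ref{PropStrongTreeCondition} applies and guarantees a tree. From the spanning-tree structure one reads that the passage from $\RR_S(u)$ to $\RR_S(ub)$ is a composition of elementary Nielsen transformations: connectedness of the graph guarantees that no generator is lost, so $\langle\RR_S(ub)\rangle=\langle\RR_S(u)\rangle=F_A$ (the surjectivity half), while acyclicity guarantees that no relation is created, so a free family remains free (the injectivity half). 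Combining the two halves over all steps, $\widehat f$ is an isomorphism and $\RR_S(w)$ is a basis of $F_A$.

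The main obstacle is exactly this right-special (in particular bispecial) step: one must write down precisely how the first return words to $u$ regroup into those of $ub$ — including the bookkeeping needed when $u$ overlaps itself, so that ``the portion of $x$ between two consecutive occurrences of $u$'' is well defined — and then verify that the resulting change of generating family is Nielsen, extracting from Proposition~\ref{PropStrongTreeCondition} both the connectedness used for generation and the acyclicity used for freeness. That the tree hypothesis is genuinely needed in both roles is already visible on $\Fac((ab)^\omega)$, where $G(\varepsilon)$ is disconnected and $\RR_S(a)=\{ba\}$ fails to generate $F_A$.
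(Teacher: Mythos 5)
Your reduction to the right-special step is sound as far as it goes: the base case $\RR_S(\varepsilon)=A$, the conjugation identity $\RR_S(ub)=b^{-1}\RR_S(u)b$ when $u$ is not right-special, and the fact that every first return word to $ub$ factors over $\RR_S(u)$ are all correct, and you avoid circularity by not invoking Theorem~\ref{propositionReturns}. But the step you yourself call ``the real content'' is a genuine gap, not bookkeeping. The graph $G_{U,V}(u)$ with $U=\RR'_S(u)$, $V=\RR_S(u)$ records only which return word to $u$ may \emph{immediately} follow which, i.e.\ the two-letter factors of the derived system over the alphabet $\RR_S(u)$; the regrouping of $\RR_S(u)$ into $\RR_S(ub)$ is governed by the first returns, inside that derived system, to the set $B_b$ of return words beginning with $b$, and these involve factors of the derived system of unbounded length. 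From that single tree one cannot ``read off'' a composition of elementary Nielsen transformations: it neither bounds the number of new generators nor shows that they generate, and making the step rigorous amounts to proving the theorem's statement for the derived set at a letter set --- essentially the assertion being proved, so the induction on $|w|$ does not close. (The slogan ``acyclicity guarantees that no relation is created'' is also empty: any subset of a free group generates a free subgroup by Nielsen--Schreier; the issue is generation of all of $F_A$ together with $\Card \RR_S(w)=\Card(A)$.)

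Note also that the present paper does not prove this statement; it quotes it as Theorem 4.5 of the companion paper on acyclic, connected and tree sets, where the argument follows a different route: connectedness of all extension graphs is used (through Rauzy graphs) to prove $\langle\RR_S(w)\rangle=F_A$; the tree (neutrality) condition gives $\Card\RR_S(w)=\Card(A)$; and one concludes because a generating set of a free group of finite rank whose cardinality equals the rank is a basis. Replacing your Nielsen step by these two independent halves is the natural repair. A minor further point: Proposition~\ref{PropStrongTreeCondition} is stated for $S$-maximal suffix and prefix codes, whereas $\RR'_S(u)$ and $\RR_S(u)$ are only $Su^{-1}$- and $u^{-1}S$-maximal; the paper itself uses the proposition in this relative form in the proof of Theorem~\ref{propositionReturns}, so this is tolerable, but it should be stated explicitly.
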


The next result is  
\cite[Theorem 5.2]{BertheDeFeliceDolceLeroyPerrinReutenauerRindone2013a}.
A submonoid $M$ of $A^*$ is \emph{saturated} in a set $S$ if
$M\cap S=\langle M\rangle\cap S$.

\begin{theorem}\label{propositionHcapF}
  Let $S$ be an acyclic set. The submonoid generated by any bifix code
  $X\subset S$ is saturated in $S$.
\end{theorem}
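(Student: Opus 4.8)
The inclusion $X^{*}\cap S\subseteq\langle X\rangle\cap S$ is trivial, so the whole content is the reverse inclusion $\langle X\rangle\cap S\subseteq X^{*}$. Write $M=X^{*}$; since $X$ is a bifix code, $M$ is both right unitary and left unitary. The plan is first to \emph{normalize} the word. Given $w\in\langle X\rangle\cap S$, let $p$ be the longest prefix of $w$ lying in $M$ and let $q$ be the longest suffix of $p^{-1}w$ lying in $M$, and write $w=pw'q$. By maximality of $p$ and $q$, the word $w'$ has neither a prefix nor a suffix in $X$; it is a factor of $w$, hence $w'\in S$ since $S$ is factorial; and $p,q\in M\subseteq\langle X\rangle$ gives $w'\in\langle X\rangle$. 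Since a nonempty element of $X^{*}$ begins with a letter of a word of $X$, once we know that $w'$ must be empty we get $w=pq\in X^{*}$. Hence it suffices to prove: \emph{no nonempty word of $S$ having neither a prefix nor a suffix in $X$ belongs to $\langle X\rangle$}.

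To establish this, assume for contradiction that such a word $w$ exists, and fix an expression $w=g_{1}g_{2}\cdots g_{k}$ with all $g_{i}\in X\cup X^{-1}$ and $k$ minimal, so that no two consecutive $g_{i}$ cancel in $F_{A}$. Then $k\ge 2$: we cannot have $k=0$ since $w\ne 1$, nor $k=1$, since $w=x\in X$ is excluded ($w$ has no prefix in $X$) and $w=x^{-1}$ is excluded ($w$ is a nonempty positive word). When the product $g_{1}\cdots g_{k}$ is freely reduced, it collapses letter by letter onto the positive word $w$; the positive letters of $w$ are contributed by the positive factors $g_{i}\in X$, while every inverse letter created by a negative factor $g_{i}=x_{i}^{-1}$ must be cancelled against neighbouring factors. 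Following this cancellation pattern is the heart of the argument: each $g_{i}$ is itself a word of $S$ (being in $X\subseteq S$), and recording how consecutive factors overlap and where they sit relative to $w$ lets one read off, inside the extension graph $G(u)$ of a suitably chosen common factor $u$ of several $g_{i}$'s and of $w$, two distinct reduced paths between the same pair of vertices, that is, a cycle. Biextendability is what allows the relevant vertices and edges to be placed in this one graph, and acyclicity of $S$ then yields the contradiction.

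The main obstacle is exactly this last step: extracting a genuine cycle in a single extension graph from the algebraic relation $w\in\langle X\rangle$, while keeping track of the free-group cancellations (the $g_{i}$ need not be prefixes or suffixes of $w$, so the overlaps of consecutive factors must be bookkept carefully, which is where minimality of $k$ and the bifix property of $X$ enter). That acyclicity is truly needed here, and not a convenience, can be seen from the bifix code $X=\{aa,ab,ba\}$: one has $bb=(ba)(aa)^{-1}(ab)\in\langle X\rangle$, and $bb$ has neither a prefix nor a suffix in $X$ and does not lie in $X^{*}$. So the conclusion would fail for any set containing $\{aa,ab,ba,bb\}$; but such a set is never acyclic, because $G(\varepsilon)$ already contains a $4$-cycle on the vertex set $\{a,b\}\sqcup\{a,b\}$.
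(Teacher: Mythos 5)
Your normalization step is fine: writing $w=pw'q$ with $p$ the longest prefix of $w$ in $X^*$ and $q$ the longest suffix of $p^{-1}w$ in $X^*$ correctly reduces saturation to the claim that no nonempty word of $S$ having neither a prefix nor a suffix in $X$ lies in $\langle X\rangle$. But that claim \emph{is} the theorem; it is exactly where all the difficulty sits, and your proposal does not prove it. You fix a minimal-length expression $w=g_1\cdots g_k$ with $g_i\in X\cup X^{-1}$ and then assert that ``recording how consecutive factors overlap'' lets one read off two distinct reduced paths, hence a cycle, in the extension graph $G(u)$ of ``a suitably chosen common factor $u$'' --- and you yourself flag this as ``the main obstacle.'' Nothing is specified: which word $u$, which vertices (they must be single letters of $A$, since $G(u)$ has vertex set $L(u)\sqcup R(u)$), which edges (each edge requires a word $aub\in S$, so every letter pair you want to use must be certified to extend $u$ inside $S$), and why the two paths are distinct and reduced. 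The partial cancellations between consecutive $g_i$ can be long and nested, and the overlap pattern does not in any obvious way localize around one common factor $u$ of all the relevant $g_i$'s; minimality of $k$ and bifixity of $X$ alone do not produce such a $u$. So as written this is a plan, not a proof, and the decisive step is missing.

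For comparison: the present paper does not prove this statement at all --- it quotes it as Theorem 5.2 of the companion paper on acyclic, connected and tree sets --- and the proof there does not proceed by exhibiting a cycle in a single extension graph from a free-group identity. Instead it first shows that acyclicity of $S$ forces the \emph{incidence graph} of the bifix code $X$ (vertices the proper prefixes and suffixes of $X$, edges the pairs $(p,s)$ with $ps\in X$) to be acyclic, and then builds from its connected components a coset-type automaton, reversible thanks to this acyclicity, which recognizes a submonoid squeezed between $X^*$ and $\langle X\rangle\cap A^*$ on words of $S$; saturation follows. Your counterexample $X=\{aa,ab,ba\}$ with $bb=(ba)(aa)^{-1}(ab)$ is a good sanity check that acyclicity is genuinely needed, but it does not substitute for the missing construction. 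To salvage your route you would have to make the cancellation bookkeeping completely explicit (e.g.\ by induction on $k$ or on the total cancellation length) and exhibit, with proof of membership in $S$ of all the required extensions, the two distinct reduced paths in one concrete extension graph; absent that, the argument has a genuine gap.
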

\subsection{Derived sets of tree sets}

We will use the following closure property
of the family of uniformly recurrent tree sets.
It generalizes the fact that the derived word of a Sturmian
word
is Sturmian (see~\cite{JustinVuillon2000}).
\begin{theorem}\label{propositionReturns}
Any derived set of a uniformly recurrent tree set
 is a uniformly recurrent tree set.
\end{theorem}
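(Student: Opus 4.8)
The plan is to prove the two properties — uniform recurrence and the tree condition — separately, relying on the structural results already collected in the excerpt. Let $S$ be a uniformly recurrent tree set containing $A$, let $w\in S$, let $f\colon B^*\to A^*$ be a coding morphism for $\RR_S(w)$, and let $T=D_f(S)=f^{-1}(w^{-1}S)$ be the derived set. First I would settle uniform recurrence: by Proposition~\ref{propReturnsFinite} the set $\RR_S(w)$ is finite, so $B$ is a finite alphabet; by Proposition~\ref{propositionRecurrent} the set $T$ is recurrent. To upgrade recurrence to uniform recurrence one takes an arbitrary $u\in T$, writes $f(u)\in\Gamma_S(w)$, and uses that $w f(u)w\in S$ together with the uniform recurrence of $S$: there is an $N$ such that $wf(u)w$ occurs in every word of $S$ of length $N$. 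Reading off the return-word factorization, any sufficiently long word of $T$ has a preimage whose $f$-image is long enough to contain an occurrence of $wf(u)w$, hence $u$ occurs in it. This is the routine direction and mirrors the proof of Proposition~\ref{propReturnsFinite}.

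The heart of the matter is the tree condition: for every $v\in T$ the extension graph $G_T(v)$ must be a tree. The key idea is to transport extensions in $T$ to generalized extensions in $S$. Fix $v\in T$ and set $u=f(v)$, so $wu$ is a prefix of some word of $S$ ending in $w$, and consider the word $wu$ (or rather a suitable representative of the ``interval'' $wuw$-neighbourhood) in $S$. The left and right letters of $v$ in $B$ correspond, via $f$, to first return words of $S$ that can be concatenated on the left and on the right of $wu$; concretely, a pair $(b,c)\in B\times B$ satisfies $bvc\in T$ iff $f(b)\,wu\,f(c)$ lies in $S$ in the appropriate sense, i.e. iff reading $wf(b)^{-}\cdots$ on the left and $\cdots f(c)w$ on the right stays in $S$. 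The natural device is Proposition~\ref{PropStrongTreeCondition}: take the generalized extension graph $G_{U,V}(wuw^{-1}$-type word$)$ where $U=\RR'_S(w)$ is a finite $S$-maximal suffix code and $V=\RR_S(w)$ is a finite $S$-maximal prefix code (using the relation $w\RR_S(w)=\RR'_S(w)w$ from~\eqref{eqAutomo}). That proposition guarantees this generalized extension graph is a tree, and $f$ induces a graph isomorphism between it and $G_T(v)$, so $G_T(v)$ is a tree as well. One must also check $T$ is biextendable, which follows from recurrence of $T$ together with finiteness and $S$-maximality of $\RR_S(w)$ on both sides.

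The main obstacle I anticipate is bookkeeping the correspondence between the word $v\in T$ and the right object in $S$ on which to apply Proposition~\ref{PropStrongTreeCondition}. The subtlety is that $f(v)=u$ is a concatenation of first return words starting right after an occurrence of $w$ and the ``window'' one needs in $S$ is really the factor $w u w$ minus its boundary overlaps, so one has to be careful about where the codes $\RR'_S(w)$ and $\RR_S(w)$ attach and verify that the edge set of $G_{U,V}$ at that word matches $E_T(v)$ exactly (no spurious edges, none missing). Once that dictionary is set up cleanly — essentially the statement that left/right extensions of $v$ in $T$ are in bijection with elements of $U$ resp.\ $V$ that extend the chosen word in $S$, with adjacency preserved — the tree property is immediate from Proposition~\ref{PropStrongTreeCondition}. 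I would organize the argument as: (1) $B$ finite and $T$ recurrent; (2) $T$ uniformly recurrent; (3) the bijection between $G_T(v)$ and a generalized extension graph in $S$; (4) conclude via Proposition~\ref{PropStrongTreeCondition} that $G_T(v)$ is a tree, hence $T$ is a uniformly recurrent tree set.
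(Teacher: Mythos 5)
Your proposal follows essentially the same route as the paper's proof: for the tree condition the paper uses exactly your dictionary $bvc\in T\Leftrightarrow f'(b)\,wf(v)\,f(c)\in S$ (with $f'$ the associated coding morphism for $\RR'_S(w)$) to identify $G_T(v)$ with the generalized extension graph $G_{\RR'_S(w),\RR_S(w)}(wf(v))$ and then applies Proposition~\ref{PropStrongTreeCondition}, noting as you intend that $\RR'_S(w)$ is an $Sw^{-1}$-maximal suffix code and $\RR_S(w)$ a $w^{-1}S$-maximal prefix code (relative maximality, not $S$-maximality as you wrote, and the relevant window is $wf(v)$ itself, which already ends in $w$, rather than $wf(v)w$). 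The only divergence is the uniform-recurrence step, where the paper proves $\Gamma_T(v)=f^{-1}(\Gamma_S(wf(v)))$, hence $\RR_T(v)=f^{-1}(\RR_S(wf(v)))$ is finite, and concludes by Proposition~\ref{propReturnsFinite}, while you argue directly that every long word of $T$ contains $v$ via an occurrence of $wf(v)$ aligned with the return-word decomposition; both arguments work.
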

\begin{proof}
Let $S$ be a uniformly recurrent tree set containing $A$, let $v\in S$
and let $f$ be a coding morphism for
 $X=\RR_S(v)$.
By Theorem~\ref{theoremJulien}, $X$ is a basis of the free group on $A$. Thus
$f:B^*\rightarrow A^*$ extends to an isomorphism from
$F_B$ onto $F_A$.

Set $H=f^{-1}(v^{-1}S)$. By
Proposition~\ref{propositionRecurrent},
 the set $H$ is
recurrent and $H=f^{-1}(\Gamma_S(v))\cup \{1\}$.

Consider $x\in H$ and set $y=f(x)$. Let
$f'$
be the  coding morphism for $X'=\RR'_S(v)$ associated with $f$.
For $a,b\in B$, we have
\begin{displaymath}
(a,b)\in G(x)\Leftrightarrow (f'(a),f(b))\in G_{X',X}(vy),
\end{displaymath}
where $G_{X',X}(vy)$ denotes the generalized extension graph of $vy$
relative to $X',X$.
Indeed, 
\begin{displaymath}
axb\in H
\Leftrightarrow f(a)yf(b)\in \Gamma_S(v)
\Leftrightarrow vf(a)yf(b)\in S
\Leftrightarrow
f'(a)vyf(b)\in S.
\end{displaymath}
The set $X'$ is an $Sv^{-1}$-maximal suffix code and the set $X$ is a
$v^{-1}S$-maximal prefix code. By
Proposition~\ref{PropStrongTreeCondition}
the generalized extension graph $G_{X',X}(vy)$ is a tree. Thus the
graph $G(x)$
is a tree. This shows that $H$ is a tree set.

Consider now $x\in H\setminus 1$. Set $y=f(x)$.
 Let us show that $\Gamma_H(x)=f^{-1}(\Gamma_S(vy))$
or equivalently $f(\Gamma_H(x))=\Gamma_S(vy)$. 
Consider first $r\in \Gamma_H(x)$. Set $s=f(r)$. Then $xr=ux$ with
$u,ux\in H$. Thus $ys=wy$ with $w=f(u)$. 

Since $u\in H\setminus \{1\}$,  $w=f(u)$ is in $\Gamma_S(v)$,
we have $vw\in A^+v\cap S$. This implies that $vys=vwy\in A^+vy\cap S$
and thus that $s\in \Gamma_S(vy)$. 
Conversely, consider $s\in
 \Gamma_S(vy)$. Since $y=f(x)$, we have $s\in\Gamma_S(v)$.
Set $s=f(r)$.
Since $vys\in A^+vy\cap S$, we have $ys\in A^+y\cap
 S$.
Set $ys=wy$. Then $vwy\in A^+vy$ implies $vw\in A^+v$ and therefore
$w\in \Gamma_S(v)$. Setting $w=f(u)$, we obtain 
$f(xr)=ys=wy\in X^+y\cap \Gamma_S(v)$. Thus $r\in\Gamma_H(x)$.
 This shows that $f(\Gamma_H(x))=\Gamma_S(vy)$ and
thus that $\RR_H(x)=f^{-1}(\RR_S(vy))$.

Since
$S$ is uniformly recurrent, the set $\RR_S(vy)$ is finite.
Since $f$ is an isomorphism, $\RR_H(x)$ is also finite,
 which shows that $H$ is uniformly recurrent.
\end{proof}

\begin{example}
Let $S$ be the Tribonacci set (see Example~\ref{exampleTribonacci}).
It is the set of factors of the infinite word $x=abacaba\cdots$
which is the fixed point of the morphism $f$ defined by $f(a)=ab$,
$f(b)=ac$, $f(c)=a$.
We have $\RR_S(a)=\{a,ba,ca\}$.  Let  $g$ be the coding morphism for
$\RR_S(a)$ defined by  $g(a)=a$, $g(b)=ba$,
$g(c)=ca$ and let $g'$ be the associated coding
morphism for $\RR'_S(a)$. We have $f=g'\pi$ where
$\pi$ is the circular permutation $\pi=(abc)$.
Set  $z=g'^{-1}(x)$. Since $g'\pi(x)=x$, we have $z=\pi(x)$. 
Thus the derived set of $S$ with respect
to $a$ is the set $\pi(S)$.
\end{example}
\subsection{Tame bases}
An automorphism $\alpha$ of the free group on $A$ is \emph{positive}
if $\alpha(a)\in A^+$ for every $a\in A$.
We say that a positive automorphism of the free group on $A$ is \emph{tame}\footnote{
The word \emph{tame} (as opposed to \emph{wild}) is used here on analogy
with its use in ring theory (see~\cite{Cohn1985}). The tame
automorphisms as introduced here should, strictly speaking, be called
positive tame automophisms since the group of all automorphisms,
positive or not, is tame in the sense that it is generated by the
elementary automorphisms.}
if it belongs to the submonoid generated by the permutations of $A$ and
the automorphisms $\alpha_{a,b}$, $\tilde{\alpha}_{a,b}$ defined 
for $a,b\in A$ with $a\ne b$ by
\begin{displaymath}
\alpha_{a,b}(c)=\begin{cases}ab&\text{ if $c=a$,}\\c&\text{
    otherwise} \end{cases}
\quad
\text{ and }
\quad
\tilde{\alpha}_{a,b}(c)=\begin{cases}ba&\text{ if $c=a$,}\\c&\text{
    otherwise.} \end{cases}
\end{displaymath}
Thus $\alpha_{a,b}$ places a letter $b$ after each $a$ and
 $\tilde{\alpha}_{a,b}$ places a letter $b$ before each $a$.
The above automorphisms and the permutations of $A$
are called the \emph{elementary}
positive automorphisms on $A$.
The monoid of positive automorphisms is not finitely generated
as soon as the alphabet has at least three generators
(see~\cite{TanWenZhang2004}).

A basis $X$ of the free group is \emph{positive} if $X\subset A^+$.
A positive basis $X$ of the free group is \emph{tame} if there exists
a tame automorphism $\alpha$ such that $X=\alpha(A)$.

\begin{example}
The set $X=\{ba,cba,cca\}$ is a tame basis of the free group on $\{a,b,c\}$.
Indeed, one has the following sequence of elementary automorphisms.
\begin{displaymath}
(b,c,a)\edge{\alpha_{c,b}}(b,cb,a)\edge{\tilde{\alpha}_{a,c}^2}(b,cb,cca)
\edge{\alpha_{b,a}}(ba,cba,cca).
\end{displaymath}
The fact that $X$ is a basis can be checked directly by the fact that
$(cba)(ba)^{-1}=c$, $c^{-2}(cca)=a$ and finally $(ba)a^{-1}=b$.
\end{example}
The following result will play a key role in the proof of the
main result of this section (Theorem~\ref{theoremTame}).
\begin{proposition}\label{propAuxiliary}
A set $X\subset A^+$ is a tame basis of the free group on $A$
if and only if $X=A$ or there is a tame basis $Y$ of the free
group on $A$ and $u,v\in Y$ such that
$X=(Y\setminus v)\cup uv$ or $X=(Y\setminus u)\cup uv$.
\end{proposition}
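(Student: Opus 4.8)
The plan is to prove the equivalence by two separate implications, exploiting the monoid structure of tame automorphisms.

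For the ``if'' direction, suppose $Y$ is a tame basis with $Y = \beta(A)$ for a tame automorphism $\beta$, and $u, v \in Y$ with $X = (Y \setminus v) \cup uv$ (the case $X = (Y \setminus u) \cup uv$ is symmetric). Write $u = \beta(a)$ and $v = \beta(b)$ with $a \ne b$ in $A$. I would like $X = \beta(\alpha_{?,?}(A))$ for a suitable elementary automorphism, but one must be careful: $uv = \beta(a)\beta(b) = \beta(ab)$, so $X = \beta(\{ \beta^{-1}(y) : y \in Y \setminus v\} \cup \{ab\})$; since $\beta^{-1}(Y) = A$ and $\beta^{-1}(v) = b$, this is $\beta((A \setminus b) \cup \{ab\}) = \beta(\tilde\alpha_{b,a}(A))$? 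No --- $\tilde\alpha_{b,a}$ sends $b$ to $ab$ and fixes everything else, so indeed $(A \setminus b) \cup \{ab\} = \tilde\alpha_{b,a}(A)$. Hence $X = (\beta \circ \tilde\alpha_{b,a})(A)$, a composite of tame automorphisms, so $X$ is a tame basis. The other case $X = (Y\setminus u)\cup uv$ similarly gives $X = (\beta\circ\alpha_{a,b})(A)$.

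For the ``only if'' direction, suppose $X$ is a tame basis, $X \ne A$. Then $X = \gamma(A)$ where $\gamma = \sigma_k \circ \cdots \circ \sigma_1$ is a nonempty composite of elementary positive automorphisms (permutations and $\alpha_{a,b}, \tilde\alpha_{a,b}$). The idea is to peel off the \emph{last} applied generator $\sigma_k$. Set $Y = (\sigma_{k-1} \circ \cdots \circ \sigma_1)(A)$, which is a tame basis (possibly $Y = A$ if $k=1$). If $\sigma_k$ is a permutation, then $X = \sigma_k(Y)$ is just a reordering of $Y$, so $X = Y$ as \emph{sets}; but then $X$ itself equals the tame basis $Y' = (\sigma_{k-2}\circ\cdots)(A)$ after possibly absorbing more permutations, and iterating we reduce to the case where $\sigma_k$ is a proper elementary automorphism $\alpha_{a,b}$ or $\tilde\alpha_{a,b}$ (or we reach $X = A$, excluded). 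If $\sigma_k = \alpha_{a,b}$: writing $u = \gamma'(a)$, $v = \gamma'(b)$ where $\gamma' = \sigma_{k-1}\circ\cdots\circ\sigma_1$, we have $\alpha_{a,b}(A) = (A\setminus a)\cup\{ab\}$, so $X = \gamma'((A\setminus a)\cup\{ab\}) = (Y \setminus \gamma'(a)) \cup \gamma'(a)\gamma'(b) = (Y\setminus u)\cup uv$ with $u,v \in Y$. If $\sigma_k = \tilde\alpha_{a,b}$, then symmetrically $X = (Y\setminus u)\cup vu$, which after swapping the roles of $u,v$ in the statement is of the form $(Y\setminus u')\cup u'v'$ with $u' = v$, $v' = u$ --- so it matches the second alternative $X = (Y \setminus v)\cup uv$ of the proposition with names adjusted.

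The main obstacle is the bookkeeping around permutations and around which of the two alternatives in the statement arises from $\alpha_{a,b}$ versus $\tilde\alpha_{a,b}$: one must check that ``set equality'' (rather than ordered tuple equality) is the right notion throughout, and that stripping a permutation off the end never changes the underlying \emph{set} $X$, so that the induction on $k$ genuinely terminates either at $X = A$ or at a genuine one-letter modification. I would state a small lemma that for a permutation $\pi$ and any automorphism $\delta$, $\pi\circ\delta$ and $\delta$ have the same image set, and that conjugating, $(\delta\circ\pi)(A) = \delta(A)$ as sets; this lets me assume without loss of generality that $\sigma_k$ is not a permutation, which is the crux of the argument.
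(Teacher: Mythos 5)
Your ``if'' direction is correct and is essentially the paper's argument: writing $Y=\beta(A)$, $u=\beta(a)$, $v=\beta(b)$, you get $X=(\beta\circ\tilde\alpha_{b,a})(A)$ or $X=(\beta\circ\alpha_{a,b})(A)$, hence a tame basis. The problem lies in the ``only if'' direction, at exactly the bookkeeping point you single out as the crux, and you resolve it the wrong way. With $\gamma=\sigma_k\circ\cdots\circ\sigma_1$ and $\sigma_k$ the last-applied (outermost) generator, you have $X=\sigma_k(Y)$ where $Y=(\sigma_{k-1}\circ\cdots\circ\sigma_1)(A)$, and then neither of your case analyses is valid: if $\sigma_k$ is a permutation of $A$, then $\sigma_k(Y)$ is $Y$ with its letters renamed, in general a different set (take $Y=\{ab,b\}=\alpha_{a,b}(A)$ and $\sigma_k$ the transposition of $a$ and $b$; then $\sigma_k(Y)=\{ba,a\}\neq Y$); and if $\sigma_k=\alpha_{a,b}$, then $X=\alpha_{a,b}(Y)$ is obtained by substituting $ab$ for every occurrence of the letter $a$ inside the words of $Y$, which is not of the form $(Y\setminus u)\cup uv$. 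The identity you actually compute with, $X=\gamma'(\alpha_{a,b}(A))$, holds only when the peeled generator is the one applied \emph{first}, i.e.\ the factor adjacent to $A$. For the same reason the first half of your proposed lemma is false: $(\pi\circ\delta)(A)=\pi(\delta(A))$ need not equal $\delta(A)$; only $(\delta\circ\pi)(A)=\delta(\pi(A))=\delta(A)$ holds. So it is not true that a permutation can be stripped off either end without changing the set.

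The repair is exactly the paper's proof: induct on $n$ in $\alpha=\alpha_1\alpha_2\cdots\alpha_n$ and peel off $\alpha_n$, the innermost factor, so that $X=\beta(\alpha_n(A))$ with $\beta=\alpha_1\cdots\alpha_{n-1}$ and $Y=\beta(A)$ tame by induction. If $\alpha_n$ is a permutation then $\alpha_n(A)=A$ and $X=Y$; if $\alpha_n=\alpha_{a,b}$ then $X=(Y\setminus u)\cup uv$ with $u=\beta(a)$, $v=\beta(b)$; if $\alpha_n=\tilde\alpha_{a,b}$ then $X=(Y\setminus u)\cup vu$, which is the other alternative after exchanging the names of $u$ and $v$. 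These are precisely the computations you wrote---they are just attached to the wrong end of your composition. (One could also salvage the outer-end version by conjugating permutations inward via $\pi\circ\alpha_{a,b}=\alpha_{\pi(a),\pi(b)}\circ\pi$, but as written the induction step of your ``only if'' direction does not go through.)
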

\begin{proof}
Assume first that $X$ is a tame basis of the free group on $A$.
Then $X=\alpha(A)$ where $\alpha$ is a tame automorphism
of $\langle A\rangle$. Then $\alpha=\alpha_1\alpha_2\cdots\alpha_n$
where the $\alpha_i$ are elementary positive automorphisms.
We use an induction on $n$. If $n=0$, then $X=A$. 
If $\alpha_n$ is a permutation of $A$, then
$X=\alpha_1\alpha_2\cdots\alpha_{n-1}(A)$ and the result holds by
induction
hypothesis. Otherwise, set $\beta=\alpha_1\cdots\alpha_{n-1}$ and
 $Y=\beta(A)$. By induction hypothesis, $Y$ is tame.
If $\alpha_n=\alpha_{a,b}$, set $u=\beta(a)$ and $v=\beta(b)=\alpha(b)$.
Then $X=(Y\setminus u)\cup uv$ and thus the condition is satisfied.
The case were $\alpha_n=\tilde{\alpha}_{a,b}$ is symmetrical.

Conversely, assume that $Y$ is a tame basis and that $u,v\in Y$
are such that $X=(Y\setminus u)\cup uv$. Then, there is a tame automorphism
$\beta$ of $\langle A\rangle$ such that $Y=\beta(A)$. Set $a=\beta^{-1}(u)$
and $b=\beta^{-1}(v)$. Then $X=\beta\alpha_{a,b}(A)$ and thus $X$ is a tame
basis.
\end{proof}
We note the following corollary. 
\begin{corollary}\label{corollaryTame}
A tame basis of the free group which is a bifix code is the alphabet.
\end{corollary}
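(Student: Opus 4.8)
The plan is to invoke Proposition~\ref{propAuxiliary} and observe that each of the two ways of producing a tame basis from a smaller one immediately destroys either the prefix or the suffix property. Suppose $X$ is a tame basis of the free group on $A$ which is a bifix code. By Proposition~\ref{propAuxiliary}, either $X=A$, in which case we are done, or there is a tame basis $Y$ of the free group on $A$ and words $u,v\in Y$ such that $X=(Y\setminus v)\cup uv$ or $X=(Y\setminus u)\cup uv$. In both cases, since the elementary automorphisms $\alpha_{a,b}$ and $\tilde{\alpha}_{a,b}$ are defined only for $a\ne b$, one has $u\ne v$, and both $u$ and $v$ are nonempty words of $A^+$.

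First I would treat the case $X=(Y\setminus v)\cup uv$: since $u\ne v$, the word $u$ still belongs to $Y\setminus v$ and hence to $X$, while $uv\in X$ as well; but $u$ is a proper prefix of $uv$ because $v$ is nonempty, contradicting the assumption that $X$ is a prefix code. Then, for the symmetric case $X=(Y\setminus u)\cup uv$: similarly $v\in Y\setminus u\subseteq X$ and $uv\in X$, yet $v$ is a proper suffix of $uv$ since $u$ is nonempty, contradicting that $X$ is a suffix code. Hence the only possibility is $X=A$.

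The argument is short and there is no serious obstacle, as all the real content has been packaged into Proposition~\ref{propAuxiliary}; the only point requiring a moment's care is the remark that $u\ne v$, which is exactly the constraint built into the definition of the elementary automorphisms $\alpha_{a,b}$ and $\tilde{\alpha}_{a,b}$.
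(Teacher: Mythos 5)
Your proof is correct and takes essentially the same route as the paper: invoke Proposition~\ref{propAuxiliary} and observe that the form $X=(Y\setminus v)\cup uv$ violates the prefix property while $X=(Y\setminus u)\cup uv$ violates the suffix property. The only difference is that you make explicit the detail $u\ne v$ (ensuring $u$, resp.\ $v$, still lies in $X$), which the paper's proof leaves implicit.
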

\begin{proof}
Assume that $X$ is a tame basis which is not the alphabet.
By Proposition~\ref{propAuxiliary} there is a tame basis $Y$ and $u,v\in Y$
such that $X=(Y\setminus v)\cup uv$ or  $X=(Y\setminus u)\cup uv$.
In the first case, $X$ is not prefix. In the second one, it is not suffix.
\end{proof}
The following example is from~\cite{TanWenZhang2004}.
\begin{example}\label{exampleWen}
The set $X=\{ab,acb,acc\}$ is a basis of the free group
on $\{a,b,c\}$. Indeed, $accb=(acb)(ab)^{-1}(acb)\in\langle X\rangle$
and thus $b=(acc)^{-1}accb\in \langle X\rangle$, which implies easily
that $a,c\in\langle X\rangle$.
 The set $X$ is bifix and thus it is not a tame basis by
Corollary~\ref{corollaryTame}.
\end{example}
The following result is a remarkable consequence of Theorem~\ref{theoremFIB}.
\begin{theorem}\label{theoremTame}
Any basis of the free group included in a uniformly recurrent tree set
is tame.
\end{theorem}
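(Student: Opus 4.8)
The plan is to argue by induction on the integer $n=\sum_{x\in X}|x|$. Since any basis of $F_A$ has exactly $\Card(A)$ elements, we have $\Card(X)=\Card(A)$ throughout, and hence $n\ge\Card(A)$ with equality precisely when every word of $X$ has length one, i.e. when $X=A$. So the base case $n=\Card(A)$ gives $X=A$, which is tame, and from now on I assume $n>\Card(A)$, so that $X\ne A$.

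The first and crucial step is to show that, under these hypotheses, $X$ cannot be a bifix code. A uniformly recurrent tree set is in particular acyclic, so if $X\subseteq S$ were bifix then by Theorem~\ref{propositionHcapF} the submonoid $X^*$ would be saturated in $S$, that is $X^*\cap S=\langle X\rangle\cap S$. As $X$ is a basis, $\langle X\rangle=F_A$, and since $S\subseteq A^*\subseteq F_A$ this gives $X^*\cap S=S$, i.e. $S\subseteq X^*$. But a letter lies in the submonoid $X^*$ only if it already belongs to $X$; from $A\subseteq S\subseteq X^*$ we therefore get $A\subseteq X$, and then $\Card(X)=\Card(A)$ forces $X=A$, a contradiction. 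Consequently $X$ is not a prefix code or not a suffix code. I carry out the argument assuming $X$ is not a prefix code --- so there are distinct $x,y\in X$ with $x$ a proper prefix of $y$, say $y=xw$ with $w\ne1$ --- and indicate the symmetric variant for the suffix case at the end.

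Then I would perform the reduction. Since $w$ is a suffix of $y\in S$ and $S$ is factorial, $w\in S$. Put $Y=(X\setminus\{y\})\cup\{w\}\subseteq S$. Then $\langle Y\rangle=F_A$: it contains $X\setminus\{y\}$ and $w$, hence also $y=xw$ (as $x\in X\setminus\{y\}$), so $\langle Y\rangle\supseteq\langle X\rangle=F_A$; since a free group of rank $\Card(A)$ cannot be generated by fewer than $\Card(A)$ elements while $\Card(Y)\le\Card(X)=\Card(A)$, we conclude $\Card(Y)=\Card(A)$ and $Y$ is a basis of $F_A$. In particular $w\notin X\setminus\{y\}$, so the total length of $Y$ is $n-|y|+|w|=n-|x|<n$, and the induction hypothesis applies: $Y$ is a tame basis. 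Finally $Y\setminus\{w\}=X\setminus\{y\}$ and $xw=y$, so $X=(Y\setminus\{w\})\cup\{xw\}$ with $x,w\in Y$, which is exactly one of the forms allowed by Proposition~\ref{propAuxiliary}; hence $X$ is a tame basis. If instead $X$ fails to be a suffix code, one writes $y=wx$ with $w$ a proper prefix of $y$ (so $w\in S$), sets $Y=(X\setminus\{y\})\cup\{w\}$ as above, and gets $X=(Y\setminus\{w\})\cup\{wx\}$, again of the form in Proposition~\ref{propAuxiliary}.

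I expect the only real content to lie in the middle step --- that a basis of $F_A$ contained in an acyclic set is the alphabet or fails to be bifix --- which is precisely where the hypothesis on $S$ enters, through the saturation statement of Theorem~\ref{propositionHcapF}. After that, the induction just peels off one elementary positive automorphism at a time, mirroring the recursive description of tame bases in Proposition~\ref{propAuxiliary}; the remaining points to check (that $Y$ is again a basis contained in $S$ of strictly smaller total length, and that the prefix and suffix reduction moves correspond to the two forms in Proposition~\ref{propAuxiliary}) are routine.
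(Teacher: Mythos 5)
Your proof is correct, and its skeleton---induction on the total length of $X$, with the non-bifix case handled by removing a word $y=xw$ in favour of $w$ and reinstating it via Proposition~\ref{propAuxiliary}---is exactly the paper's. Where you genuinely diverge is the bifix case: the paper invokes the finite index basis property (Theorem~\ref{theoremFIB}) to conclude that a bifix basis is an $S$-maximal bifix code of $S$-degree $1$, hence equals $A$, whereas you reach the same conclusion from the saturation theorem for acyclic sets (Theorem~\ref{propositionHcapF}): since $X$ is bifix and $\langle X\rangle=F_A$, saturation gives $S\subseteq X^*$, so every letter of $S$ lies in $X$ and cardinality forces $X=A$. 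This is a lighter ingredient: Theorem~\ref{theoremFIB} is the deep result the paper presents the theorem as a consequence of, while saturation needs only acyclicity, so your argument never uses uniform recurrence and in fact proves the statement for any biextendable acyclic set. Two points you leave implicit are routine but worth noting: the inclusion $A\subseteq S$ (needed to pass from $S\subseteq X^*$ to $A\subseteq X$) holds because every letter of $A$ must occur in some word of $X\subseteq S$ (otherwise $\langle X\rangle$ would lie in a proper free factor) and $S$ is factorial; and the fact that a generating set of $F_A$ with $\Card(A)$ elements is a basis relies on free groups of finite rank being Hopfian---both at the same level of detail as the paper's own unproved assertion that $Y$ is again a basis.
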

\begin{proof}
Let $S$ be a uniformly recurrent tree set. Let $X\subset S$
be a basis of the free group on $A$. Since $A$ is finite, $X$
is finite (and of the same cardinality as $A$). We use an induction
on the sum $\lambda(X)$ of the lengths of the words of $X$. If $X$
is bifix, by Theorem~\ref{theoremFIB}, it is an $S$-maximal
bifix code of $S$-degree $1$. Thus $X=A$ (see
Example~\ref{exampleDegree1}).
Next assume for example that $X$ is not prefix. Then there
are nonempty words $u,v$ such that $u,uv\in X$. Let
$Y=(X\setminus uv)\cup v$. Then $Y$ is a basis of the free
group and $\lambda(Y)<\lambda(X)$. By induction
hypothesis, $Y$ is tame. Since $X=(Y\setminus v)\cup uv$,
$X$ is tame
by Proposition~\ref{propAuxiliary}.
\end{proof}
\begin{example}
The set $X=\{ab,acb,acc\}$ is a basis of the free group which is not
tame (see Example~\ref{exampleWen}). Accordingly, the extension
graph $G(\varepsilon)$ relative to the set of factors
of $X$ is not a tree (see Figure~\ref{figureWen}).
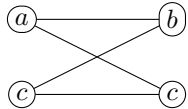
\begin{figure}[hbt]
\centering
\gasset{Nadjust=wh,AHnb=0}
\begin{picture}(20,10)
\node(al)(0,10){$a$}\node(cl)(0,0){$c$}
\node(br)(20,10){$b$}\node(cr)(20,0){$c$}
\drawedge(al,br){}\drawedge(al,cr){}
\drawedge(cl,br){}\drawedge(cl,cr){}
\end{picture}
\caption{The graph $G(\varepsilon)$.}\label{figureWen}
\end{figure}
\end{example}
\subsection{$S$-adic representations}\label{sectionSadic}
In this section we study $S$-adic representations of tree sets.
This notion was introduced in~\cite{Ferenczi1996}, using a terminology
initiated by Vershik and coined out by B.~Host.
We first recall a general construction allowing to build $S$-adic representations of any uniformly recurrent aperiodic set (Proposition~\ref{prop: S-adic UR set}) which is based on return words.
Using Theorem~\ref{theoremTame}, we show that this construction actually 
provides $\mathcal{S}_e$-representations of uniformly recurrent tree sets (Theorem~\ref{base tame}), where $\mathcal{S}_e$ is the set of elementary positive automorphisms
 of the free group on $A$.

Let $S$ be a set of morphisms and $\mathbf{h} = (\sigma_n)_{n \in \N}$ 
be a sequence in $S^\N$ with $\sigma_n:A_{n+1}^* \to A_n^*$ and $A_0=A$.
We let $T_\mathbf{h}$ denote the set of words 
$\bigcap_{n \in \N} \Fac(\sigma_0 \cdots \sigma_n(A_{n+1}^*))$.
We call a factorial set $T$ an {\em $S$-adic set} if there exists 
$\mathbf{h} \in S^\N$ such that $T = T_\mathbf{h}$. 
In this case, the sequence $\mathbf{h}$ is called an 
{\em $S$-adic representation} of $T$.

\begin{example}
Any Sturmian set is $S$-adic with a finite set $S$. This results from the fact that any Sturmian
word is obtained by iterating a sequence of morphism of the form
$\psi_a$ for $a\in A$ defined by $\psi_a(a)=a$ and $\psi_a(b)=ab$
for $b\ne a$ (see~\cite{ArnouxRauzy1991} 
or~\cite{BerstelDeFelicePerrinReutenauerRindone2012}). 
\end{example}

A sequence of morphisms $(\sigma_n)_{n \in \N}$ is said to be 
{\em everywhere growing} if $\min_{a\in A_n}$ $|\sigma_0 \cdots \sigma_{n-1}(a)|$ 
goes to infinity as $n$ increases.
A sequence of morphisms $(\sigma_n)_{n \in \N}$ is said to be {\em primitive} if for all $r \geq 0$ 
there exists $s >r$ such that all letters of $A_r$ occur in all images $\sigma_r \cdots \sigma_{s-1}(a)$, $a \in A_s$.
Obviously any primitive sequence of morphisms is everywhere growing.

A uniformly recurrent set $T$ is said to be {\em aperiodic} if it contains at least one right-special factor of 
each length.
The next (well-known) proposition provides a general construction to get a primitive $S$-adic representation 
of any aperiodic uniformly recurrent set $T$.

\begin{proposition}
\label{prop: S-adic UR set}
An aperiodic factorial set $T \subset A^*$ is uniformly recurrent if and only if it has a primitive $S$-adic 
representation for some (possibly infinite) set $S$ of morphisms.
\end{proposition}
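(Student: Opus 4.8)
The plan is to prove the two implications separately, with the backward direction being essentially routine and the forward direction requiring the return-word construction.

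\medskip

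\noindent\textit{Proof (proposal).}
First I would dispose of the easy implication. Suppose $T$ has a primitive $S$-adic representation $\mathbf{h}=(\sigma_n)_{n\in\N}$. Primitivity means that for every $r$ there is $s>r$ such that every letter of $A_r$ occurs in each $\sigma_r\cdots\sigma_{s-1}(a)$ for $a\in A_s$. Given a word $u\in T$, it appears in some $\Fac(\sigma_0\cdots\sigma_m(A_{m+1}^*))$, hence $u$ is a factor of $\sigma_0\cdots\sigma_m(a)$ for some letter $a\in A_{m+1}$. Applying primitivity at level $m+1$ one finds $s$ such that every letter of $A_{m+1}$ occurs in each $\sigma_{m+1}\cdots\sigma_{s-1}(b)$, $b\in A_s$; composing, $\sigma_0\cdots\sigma_{s-1}(b)$ contains an occurrence of $u$ for every $b\in A_s$. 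Since the images are everywhere growing, these blocks have bounded length, and any sufficiently long word of $T$ is a factor of some $\sigma_0\cdots\sigma_{s-1}(b_1b_2)$, which therefore contains $u$. This gives the uniform recurrence bound; aperiodicity is part of the hypothesis and is preserved.

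\medskip

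\noindent For the forward implication, assume $T\subset A^*$ is uniformly recurrent and aperiodic. The idea is to iterate the passage to derived sets. Pick any word $w_0\in T$ (say a letter, after checking $T$ contains $A$, or more carefully $A_0:=\alp(T)$). By Proposition~\ref{propReturnsFinite} the first right return code $\RR_T(w_0)$ is finite; let $\sigma_0$ be (the morphism induced by) a coding morphism for $\RR_T(w_0)$, so $\sigma_0:A_1^*\to A_0^*$ with $\Card(A_1)=\Card\RR_T(w_0)$, and the derived set $D_{\sigma_0}(T)$ is again recurrent by Proposition~\ref{propositionRecurrent}, in fact uniformly recurrent by Proposition~\ref{propReturnsFinite} applied in the other direction. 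Since $T$ is aperiodic one checks the derived set is again aperiodic (a right-special factor of each length in $T$ pulls back to one in the derived set, because $\RR_T(w_0)$ is a maximal prefix code so distinct continuations survive). Iterating, choosing at each stage a word $w_n$ in the current derived set with $|w_n|$ large enough to guarantee everywhere-growing, we obtain a sequence $(\sigma_n)_{n\in\N}$. The key points to verify are: (i) $T=\bigcap_n\Fac(\sigma_0\cdots\sigma_n(A_{n+1}^*))$, which follows from the identity $D_{\sigma_n}(\cdots)=\sigma_n^{-1}(w_n^{-1}(\cdots))$ unwound step by step together with the fact that $w_0w_1'\cdots$ prefixes exhaust $T$; and (ii) primitivity of $(\sigma_n)$, which one gets by choosing the $w_n$ so that the return words are long enough that each return word of $w_{n}$ contains all letters appearing just below — more precisely, because the derived alphabet is indexed by first return words and, for a word $w$ long enough, every first return word to $w$ must contain every letter of $T$, hence every symbol of the lower alphabet occurs in $\sigma_{n}(a)$ for all $a$.

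\medskip

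\noindent The main obstacle is clause (ii): arranging primitivity of the sequence rather than mere everywhere-growing. The mechanism is that in a uniformly recurrent set, once $w$ is long enough, $w$ itself must appear in every sufficiently long word, so every first return word to $w$ has length at least the recurrence gap and in particular must sweep past an occurrence of each letter; choosing the $w_n$ along the tower to grow fast enough makes each single step already cover all lower-level letters, giving primitivity with $s=r+1$. I would also remark (as the excerpt foreshadows in the following section) that combining this construction with Theorem~\ref{theoremTame} upgrades $S$ to the finite set $\mathcal S_e$ of elementary positive automorphisms when $T$ is moreover a tree set, since then each $\RR_{}(w_n)$ is a tame basis, hence $\sigma_n$ factors through elementary positive automorphisms.
\QED
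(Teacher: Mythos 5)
Your overall strategy is the paper's (a tower of return words / iterated derived sets for the hard direction, a block argument for the easy one), but the key step — primitivity — rests on a false claim. You assert that once $w$ is long enough, ``every first return word to $w$ has length at least the recurrence gap'' and therefore meets every letter. A first return word $x$ to $w$ only satisfies $wx\in T\cap A^+w$, and $|x|$ can be much smaller than $|w|$, let alone the recurrence gap: whenever $w$ has a period $p<|w|$ that extends inside $T$ (e.g.\ $w=aa$ with $aaa\in T$), there is a return word of length $p$. So ``return words are longer than the gap'' is not a valid mechanism. The conclusion you want (every first return word to a sufficiently long $w$ contains every letter) is in fact true, but it needs a different argument: if $|x|\ge |w|$ then $w$ is a suffix of $x$; if $|x|<|w|$ then $wx$ ending in $w$ forces $w$ to have period $|x|$ and $x$ to be a length-$|x|$ window of $w$, so $\alp(x)=\alp(w)$, which is the whole alphabet once $|w|$ exceeds the uniform-recurrence constant for letters. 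Without this (or a substitute) your step (ii) is unproven. Note that the paper sidesteps the issue entirely: it fixes a nested sequence $(u_n)$, proves everywhere-growing from aperiodicity (a letter fixed along the tower would give $v^k\in T$ for all $k$), and then obtains primitivity by choosing, for a given $r$, a level $s$ so large that $\min_{b\in A_s}|\sigma_0\cdots\sigma_{s-1}(b)|$ exceeds the recurrence constant $N_{|u|}$; it never needs primitivity in one step.

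The easy direction also contains two incorrect statements. From $u\in\Fac(\sigma_0\cdots\sigma_m(A_{m+1}^*))$ you cannot conclude that $u$ is a factor of the image of a single letter: an occurrence may straddle blocks, which is why the paper first uses the growth condition to confine $u$ to the image of two consecutive letters $a,b\in A_r$ and then finds a letter $c\in A_s$ with $ab\in\Fac(\sigma_r\cdots\sigma_{s-1}(c))$ before applying primitivity. And your final step is reversed: a sufficiently long word of $T$ is not ``a factor of some $\sigma_0\cdots\sigma_{s-1}(b_1b_2)$'' (that fails as soon as it is longer than two blocks); rather, being a factor of some $\sigma_0\cdots\sigma_{t-1}(x)$ and of length at least twice the maximal block length, it must contain a full block $\sigma_0\cdots\sigma_{t-1}(d)$, hence an occurrence of $u$. (Also, ``everywhere growing, so these blocks have bounded length'' is a non sequitur; boundedness at a fixed level just comes from finiteness of the alphabet.) All of these slips are repairable, but as written both implications have holes exactly where the real work lies.
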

\begin{proof}
Let $S$ be a set of morphisms and  $\mathbf{h} = (\sigma_n:A_{n+1}^* \to A_n^*)_{n \in \N} \in S^\N$ 
be a primitive sequence of morphisms such that $T = \bigcap_{n \in \N} \Fac(\sigma_0 \cdots \sigma_n(A_{n+1}^*))$.
Consider a word $u \in T$ and let us prove that $u \in \Fac(v)$ 
for all long enough $v \in T$.
The sequence $\bh$ being everywhere growing, there is an integer $r > 0$ such that 
$\min_{a \in A_r} |\sigma_0 \cdots \sigma_{r-1}(a)| > |u|$.
As $T = \bigcap_{n \in \N} \Fac(\sigma_0 \cdots \sigma_n(A_{n+1}^*))$, there is an integer $s > r$, 
two letters $a,b \in A_r$ and a letter $c \in A_s$ such that $u \in \Fac(\sigma_0 \cdots \sigma_{r-1}(ab))$ 
and $ab \in \Fac(\sigma_r \cdots \sigma_{s-1}(c))$.
The sequence $\bh$ being primitive, there is an integer $t > s$ such that $c$ occurs in 
$\sigma_s \cdots \sigma_{t-1}(d)$ for all $d \in A_t$.
Thus $u$ is a factor of all words $v \in T$ such that $|v| \geq 2\max_{d \in A_t} |\sigma_0 \cdots \sigma_{t-1}(d)|$ 
and $T$ is uniformly recurrent.

Let us prove the converse.
Let $(u_n)_{n \in \N} \in T^\N$ be a non-ultimately periodic sequence such that $u_n$ is suffix of $u_{n+1}$.
By assumption, $T$ is uniformly recurrent so $\mathcal{R}_T(u_{n+1})$ is finite for all $n$. 
The set $T$ being aperiodic, $\mathcal{R}_T(u_{n+1})$ also has cardinality at least 2 for all $n$.
For all $n$, let $A_n = \{0,\dots,\Card(\mathcal{R}_T(u_n))-1\}$ and let $\alpha_n: A_n^* \to A^*$ be a 
coding morphism for $\mathcal{R}_T(u_n)$.
The word $u_n$ being suffix of $u_{n+1}$, we have $\alpha_{n+1}(A_{n+1}) \subset \alpha_n(A_n^+)$.
Since $\alpha_n(A_n) = \mathcal{R}_T(u_n)$ is a prefix code, there is a unique 
morphism $\sigma_n: A_{n+1}^* \to A_n^*$ such that $\alpha_n \sigma_n = \alpha_{n+1}$.
For all $n$ we get 
$\mathcal{R}_T(u_n) = \alpha_0 \sigma_0 \sigma_1 \cdots \sigma_{n-1}(A_n)$ 
and $T = \bigcap_{n \in \N} \Fac(\alpha_0 \sigma_0 \cdots \sigma_n(A_{n+1}^*))$.
Without loss of generality, we can suppose that $u_0 = \varepsilon$ and $A_0 = A$.
In that case we get $\alpha_0 = \id$ and the set $S$ thus has an $S$-adic 
representation with $S = \{\sigma_n \mid n \in \N\}$.

Let us show that $\bh = (\sigma_n)_{n \in \N}$ is everywhere growing.
If not, there is a sequence of letters $(a_n \in A_n)_{n \geq N}$ such that 
$\sigma_n(a_{n+1}) = a_n$ for all $n \geq N$ for some $N\ge 1$.
This means that the word $v = \sigma_0 \cdots \sigma_n(a_n) \in T$ is a first 
return word to $u_n$ for all $n \geq N$.
The sequence $(|u_n|)_{n \in \N}$ being unbounded, the word $v^k$ belongs to $T$ 
for all positive integers $k$, which contradicts the uniform recurrence of $T$.

Let us show that $\bh$ is primitive.
The set $T$ being uniformly recurrent, for all $n \in \N$ there exists $N_n$ 
such that all words of $T \cap A^{\leq n}$ occur in all words of $T \cap A^{\geq N_n}$.
Let $r \in \N$ and let $u = \sigma_0 \cdots \sigma_{r-1}(a)$ for some $a \in A_r$.
Let $s>r$ be an integer such that
 $\min_{b \in A_s} |\sigma_0 \cdots \sigma_{s-1}(b)| \geq N_{|u|}$.
Thus $u$ occurs in $\sigma_0 \cdots \sigma_{s-1}(b)$ for all $b \in A_s$.
As $\sigma_0 \cdots \sigma_{s-1}(A_s) \subset \sigma_0 \cdots \sigma_{r-1}(A_r^+)$ 
and as $\sigma_0 \cdots \sigma_{r-1}(A_r) = \mathcal{R}_T(u_r)$ is a prefix code, 
the letter $a \in A_r$ occurs in $\sigma_r \cdots \sigma_{s-1}(b)$ for all $b \in A_r$.
\end{proof}

Even for uniformly recurrent sets with linear factor complexity, the set of morphisms $S=\{\sigma_n \mid n \in \N \}$ 
considered in Proposition~\ref{prop: S-adic UR set} is usually  infinite as well as the sequence of alphabets 
$(A_n)_{n \in \N}$ is usually  unbounded (see~\cite{Durand-Leroy-Richomme}).
For tree sets $T$, the next theorem significantly improves the only if part of 
Proposition~\ref{prop: S-adic UR set}: For such sets, the set $S$ can be replaced by the set $\mathcal{S}_e$ 
of elementary positive automorphisms. In particular, $A_n$ is equal to $A$ for all $n$.

\begin{theorem}\label{base tame}
If $T$ is a uniformly recurrent tree set over an alphabet $A$, then it has a primitive $\mathcal{S}_e$-adic representation.
\end{theorem}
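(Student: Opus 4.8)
The plan is to combine the general construction of Proposition~\ref{prop: S-adic UR set} with the structural results on return words and tame bases proved earlier in this section. Start from a uniformly recurrent tree set $T$ over $A$, which is aperiodic since by Proposition~\ref{propositionComplexity} its factor complexity is $p_n = kn+1$ with $k = \Card(A)-1 \ge 1$, forcing a right-special factor of each length. Fix a non-ultimately periodic sequence $(u_n)_{n \in \N}$ in $T^\N$ with $u_0 = \varepsilon$ and $u_n$ a proper suffix of $u_{n+1}$; concretely one can take $u_n$ to be the length-$n$ suffix of some fixed left-infinite ray of left-special factors. Apply the construction in the proof of Proposition~\ref{prop: S-adic UR set}: setting $\alpha_n$ to be a coding morphism for $\RR_T(u_n)$, one obtains morphisms $\sigma_n : A_{n+1}^* \to A_n^*$ with $\alpha_n \sigma_n = \alpha_{n+1}$, and the sequence $\bh = (\sigma_n)_{n \in \N}$ is a primitive $S$-adic representation of $T$.

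The point where tree-set structure enters is the claim that each $\sigma_n$ can be chosen to be a tame automorphism, hence a composition of elementary positive automorphisms, so that after concatenating these decompositions one gets an $\mathcal{S}_e$-adic representation. The key inputs are Theorem~\ref{theoremJulien}, which says $\RR_T(u_n)$ is a basis of the free group $F_A$, and Theorem~\ref{propositionReturns} together with its proof, which shows that the derived set $D_{\alpha_n}(T) = \alpha_n^{-1}(u_n^{-1}T)$ is again a uniformly recurrent tree set; moreover in that derived set the images $\sigma_n(A_{n+1})$ are, up to the identification via $\alpha_n$, exactly the first return words $\RR$ to the relevant word, so they form a basis of $F_{A_n}$ contained in a uniformly recurrent tree set. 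Therefore by Theorem~\ref{theoremTame} each $\sigma_n$ is a tame automorphism of $F_{A_n} = F_A$ (the alphabets all have the same cardinality $\Card(A)$, and one fixes a bijection so that $A_n = A$). Writing each $\sigma_n = \tau_{n,1} \cdots \tau_{n,k_n}$ as a product of elementary positive automorphisms and interleaving, the sequence of all the $\tau_{n,j}$ is an $\mathcal{S}_e$-adic representation of $T$.

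It then remains to check that this refined sequence is still primitive. This follows because inserting finitely many elementary automorphisms between consecutive $\sigma_n$ does not destroy primitivity: primitivity of $\bh$ gives, for each $r$, an $s$ with all letters of $A_r$ occurring in every $\sigma_r \cdots \sigma_{s-1}(a)$; since each $\sigma_i$ is surjective on $F_A$ and positive, every elementary factor $\tau_{i,j}$ has the property that each letter occurs in the image of the whole product over any block long enough to cover one full $\sigma_i \cdots \sigma_{s-1}$, and one only needs the coarser statement that some bounded number of steps suffices, which is inherited. One must be slightly careful: an individual $\tau_{i,j}$ need not have every letter in every image, so the argument should be phrased at the level of the blocks $\sigma_i = \tau_{i,1}\cdots\tau_{i,k_i}$ rather than individual elementary steps — that is, show the refined sequence is primitive by exhibiting, for each position, a later position such that the product over that range is exactly $\sigma_r \cdots \sigma_{s-1}$ for suitable $r,s$, and invoke primitivity of $\bh$.

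The main obstacle is the identification, inside the proof, of $\sigma_n$ with a return-word map of a tree set so that Theorem~\ref{theoremTame} applies: one must verify carefully that $\alpha_n(A_n) = \RR_T(u_n)$ being a prefix code forces $\sigma_n(A_{n+1})$ to be precisely the set of first return words, in the derived set $D_{\alpha_n}(T)$, to the word $w_n \in D_{\alpha_n}(T)$ corresponding to $u_{n+1}$ under $\alpha_n$ — equivalently that $\sigma_n(A_{n+1}) = \RR_{D_{\alpha_n}(T)}(w_n)$. This is essentially the content built into the proof of Theorem~\ref{propositionReturns} (the relation $\RR_H(x) = f^{-1}(\RR_S(vy))$ there), reapplied with $S = T$, $v = u_n$, $f = \alpha_n$, $H = D_{\alpha_n}(T)$, and $vy = u_{n+1}$; once this is in place, Theorem~\ref{theoremTame} gives tameness of $\sigma_n$ and the rest is bookkeeping.
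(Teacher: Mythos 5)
Your overall architecture is the paper's: build the $\sigma_n$ by the construction of Proposition~\ref{prop: S-adic UR set}, show each $\sigma_n$ is tame via Theorems~\ref{theoremJulien}, \ref{propositionReturns} and \ref{theoremTame}, then split tame morphisms into elementary ones. However, the step you yourself flag as the main obstacle is justified incorrectly, and for the sequence $(u_n)$ you actually choose it is false. The relation $\RR_H(x)=f^{-1}\bigl(\RR_S(vf(x))\bigr)$ from the proof of Theorem~\ref{propositionReturns} identifies, inside the derived set $H=\alpha_n^{-1}(u_n^{-1}T)$, only the return words to words of the form $u_ny$ with $y\in\Gamma_T(u_n)$, that is, words of $T$ which \emph{begin and end} with $u_n$. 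The general construction only guarantees that $u_n$ is a suffix of $u_{n+1}$, and with your concrete choice ($u_n$ the length-$n$ suffix of a left-infinite ray) one has $u_{n+1}=bu_n$ for a letter $b$, so $|u_{n+1}|<2|u_n|$ for $n\ge 2$ and there is no $w_n\in H$ with $u_n\alpha_n(w_n)=u_{n+1}$. Hence the identification $\sigma_n(A_{n+1})=\RR_{D_{\alpha_n}(T)}(w_n)$, on which you hang the tameness of $\sigma_n$, does not hold for your sequence: $\sigma_n(A_{n+1})$ need not be a set of first return words of the derived set at all.

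The gap can be closed in two ways. First, you do not need the return-word identification: $\sigma_n(A_{n+1})=\alpha_n^{-1}(\RR_T(u_{n+1}))$ is a basis of the free group on $A_n$, because $\RR_T(u_n)$ and $\RR_T(u_{n+1})$ are bases of $F_A$ by Theorem~\ref{theoremJulien} and $\alpha_n$ extends to an isomorphism; and it is contained in $D_{\alpha_n}(T)$, since $u_n$ being a suffix of $u_{n+1}$ gives $u_nx\in T$ for every $x\in\RR_T(u_{n+1})$. As $D_{\alpha_n}(T)$ is a uniformly recurrent tree set by Theorem~\ref{propositionReturns}, Theorem~\ref{theoremTame} then yields tameness of $\sigma_n$ directly (up to a permutation of the alphabet, which is elementary). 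Second, and this is the paper's route, one chooses the sequence of words by iterated derivation: at each step $\sigma_n$ is by construction a coding morphism for the return words to a single letter of the derived tree set $T^{(n)}$, so tameness is immediate from Theorems~\ref{theoremJulien} and~\ref{theoremTame} and no identification lemma is needed. Finally, your remark on primitivity of the refined sequence is essentially correct but should be argued by noting that a word containing every letter is mapped to a word containing every letter by any positive automorphism; this handles starting positions strictly inside a block, where the product over the range is not exactly some $\sigma_r\cdots\sigma_{s-1}$, contrary to what your last formulation suggests.
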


\begin{proof}
For any non-ultimately periodic sequence $(u_n)_{n \in \N} \in T^\N$ such that $u_0 = \varepsilon$ and $u_n$ is suffix of $u_{n+1}$, the sequence of morphisms $(\sigma_n)_{n \in \N}$ built in the proof of Proposition~\ref{prop: S-adic UR set} is a primitive $S$-adic representation of $T$ with $S = \{\sigma_n \mid n \in \N\}$.
Therefore, all we need to do is to consider such a sequence $(u_n)_{n \in \N}$ such that $\sigma_n$ is tame for all $n$.

Let $u_1 = a^{(0)}$ be a letter in $A$. Set $A_0=A$
and let  $\sigma_0: A_1^* \to A_0^*$
be a coding morphism for $\mathcal{R}_T(u_1)$.
By Theorem~\ref{theoremJulien}, the set $\mathcal{R}_T(u_1)$ is a basis of the free group on $A$. 
By Theorem~\ref{theoremTame}, the morphism $\sigma_0: A_1^* \to A_0^*$ is tame ($A_0 = A$).
Let $a^{(1)} \in A_1$ be a letter and set $u_2 = \sigma_0(a^{(1)})$. 
Thus $u_2 \in \mathcal{R}_T(u_1)$ and $u_1$ is a suffix of $u_2$.
By Theorem~\ref{propositionReturns}, the derived set $T^{(1)} = \sigma_0^{-1}(S)$ is a uniformly recurrent tree set on the alphabet $A$. 
We thus reiterate the process with $a^{(1)}$ and we conclude by induction with $u_n = \sigma_0 \cdots \sigma_{n-2}(a^{(n-1)})$ for all $n \geq 2$.
\end{proof}
We illustrate Theorem~\ref{base tame} by the following example.
\begin{example}\label{exampleSadicTree}
Let $f$ and $S$ be as in Example~\ref{exampleJulienLeroy}. 
We have $f=\alpha_{a,c}\alpha_{b,a}\alpha_{c,b}$. Thus the tree set
$S$ has the $\mathcal{S}_e$-adic representation $(\sigma_n)_{n\ge 0}$ given
by the periodic sequence $\sigma_{3n}=\alpha_{a,c}$, $\sigma_{3n+1}=\alpha_{b,a}$, $\sigma_{3n+2}=\alpha_{c,b}$.
\end{example}
The converse of Theorem~\ref{base tame} is not true, as shown by Example~\ref{exampleNotTree} below. 
\begin{example}\label{exampleNotTree}
Let $A=\{a,b,c\}$ and let $f:a\mapsto ac, b\mapsto bac, c\mapsto cb$.
The set $S$ of factors of the fixed point $f^\omega(a)$ is not a tree set
since $bb,bc,cb,cc\in S$ and thus $G(\varepsilon)$ has a cycle although 
$f$ is a tame automorphism since $f=\alpha_{a,c}\alpha_{c,b}\alpha_{b,a}$.
\end{example}
In the case of a ternary alphabet, a characterization
of tree sets by their $S$-adic representation can be proved~\cite{Leroy2014bis},
showing that there exists a  B\"uchi automaton on the alphabet $\mathcal{S}_e$
recognizing
the set of  $S$-adic representations
of uniformly recurrent tree sets.

\section{Maximal bifix decoding}\label{sectionBifixDecoding}
In this section, we state and prove the main result of this paper
(Theorem~\ref{theoremNormal}). In the first part, we prove two results
concerning morphisms onto a finite group. In the second one we prove
a sequence of lemmas leading to a proof of the main result.
\subsection{Main result}\label{subsectionMainResult}
The family of uniformly recurrent tree sets contains both the Sturmian sets
and the regular interval exchange sets. The second
family is closed under maximal bifix decoding 
(see~\cite[Theorem 3.13]{BertheDeFeliceDolceLeroyPerrinReutenauerRindone2013ie})
but the first family is
not (see Example~\ref{exampleTribonacci2} below).
The following result shows that the family of uniformly recurrent tree
sets is a natural closure of the family of Sturmian sets.
\begin{theorem}\label{theoremNormal}
The family of uniformly recurrent tree sets is closed under maximal bifix decoding.
\end{theorem}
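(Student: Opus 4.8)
The plan is to fix a uniformly recurrent tree set $S$ containing $A$, a finite $S$-maximal bifix code $X$ with coding morphism $f:B^*\to A^*$, and set $T=f^{-1}(S)$; we must show $T$ is a uniformly recurrent tree set. The first thing I would establish is that $T$ is uniformly recurrent, which should follow from a counting/combinatorial argument: since $X$ is a finite $S$-maximal bifix code, every long enough word of $S$ factors in a controlled way over $X$, so an occurrence of a word $v\in T$ (i.e.\ of $f(v)\in X^*$ inside $S$) will occur in every sufficiently long word of $T$ by uniform recurrence of $S$; the internal factor characterization~\eqref{eqInternal} and the finiteness of the $S$-degree are the tools here. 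One subtlety is making sure $T$ is nonempty and stable, i.e.\ that $f^{-1}(S)$ is genuinely a factorial recurrent set over the alphabet $B$; this uses that $X$ is $S$-maximal so that each letter of $B$ (each word of $X$) actually occurs inside $S$-factorizations.

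The heart of the proof is the tree condition: for every $w\in T$, the extension graph $G_T(w)$ must be a tree. The natural strategy is to relate extensions of $w$ in $T$ to extensions, in $S$, of the word $f(w)$ — but with respect to the suffix code $X$ on the left and the prefix code $X$ on the right, i.e.\ via the generalized extension graph $G_{X,X}(f(w))$ of Proposition~\ref{PropStrongTreeCondition}. For $(b,c)\in B\times B$ we have $bwc\in T$ iff $f(b)f(w)f(c)\in S$ with $f(b)\in X$, $f(c)\in X$, which is exactly an edge of $G_{X,X}(f(w))$. Since $X$ is a finite $S$-maximal bifix code, it is both a finite $S$-maximal suffix code and a finite $S$-maximal prefix code, so Proposition~\ref{PropStrongTreeCondition} gives that $G_{X,X}(f(w))$ is a tree; mapping vertices back through the bijection $f|_B$ then shows $G_T(w)$ is a tree. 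Provided nothing goes wrong, this handles all $w\in T$ uniformly, so no separate treatment of bispecial words is needed.

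The step I expect to be the main obstacle is the uniform recurrence (and even the recurrence and factoriality) of $T$ — more precisely, controlling the left context. The left-extension set $L_T(w)$ corresponds to words $f(b)\in X$ with $f(b)f(w)\cdots\in S$, and one must be careful that every prefix of every word of $T$ that one cares about arises from a genuine $X^*$-factorization inside $S$; here the parse/internal-factor machinery and the fact (from the preliminaries) that for recurrent $S$ a finite bifix code $S$-maximal as a bifix code is also an $S$-maximal prefix code are what make the bookkeeping work. I would also double-check that $T$ contains the alphabet $B$, which is needed before invoking the tree-set definition, and this again is exactly $S$-maximality of $X$. Once recurrence and factoriality of $T$ are in place, the tree property is essentially immediate from Proposition~\ref{PropStrongTreeCondition}, and uniform recurrence follows by the same finiteness-of-return-words reasoning as in Proposition~\ref{propReturnsFinite}, using that the lengths of words of $X$ are bounded. (It is worth noting that although the paper emphasizes Theorem~\ref{propositionReturns} on derived sets in the introduction, for this particular argument the generalized extension graph result seems to be the more direct route; if instead the authors route through return words, the plan would be to express return words in $T$ to $w$ in terms of return words in $S$ to $f(w)$ composed with $X$, exactly as in the proof of Theorem~\ref{propositionReturns}, and then invoke Theorem~\ref{theoremFIB} and Theorem~\ref{theoremGroupCode} to keep track of the group-theoretic side.)
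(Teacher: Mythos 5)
Your treatment of the tree condition is fine: identifying the extension graph of $w$ in $T=f^{-1}(S)$ with the generalized extension graph $G_{X,X}(f(w))$ and invoking Proposition~\ref{PropStrongTreeCondition} is exactly how the cited result (Theorem~\ref{InverseImageTree}) is obtained, and the paper simply quotes that theorem for this step. So that half of the statement costs nothing new.

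The genuine gap is in recurrence and uniform recurrence of $T$, which is precisely the content of the present theorem and which your ``counting/combinatorial argument'' does not deliver. Uniform recurrence of $S$ guarantees that $f(v)$ occurs in every long word of $S$, but such an occurrence inside $f(u)$ (for $u\in T$) need not be \emph{aligned} with the $Z$-factorization, i.e.\ need not be of the form $f(u)=f(p)f(v)f(s)$ with $p,s\in B^*$; without alignment it gives no factor $v$ of $u$ in $T$. The parse/internal-factor machinery and the boundedness of the lengths of words of $Z$ do not repair this, and Example~\ref{example511} shows the issue is real: $S=\Fac((ab)^\omega)$ is uniformly recurrent, $\{ab,ba\}$ is a finite $S$-maximal bifix code, yet the decoding $u^*\cup v^*$ is not even recurrent. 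Hence any correct argument must use the tree hypothesis in an essential, non-bookkeeping way at this very step. In the paper, recurrence (Lemma~\ref{lemma1}) is obtained by taking $t=f(r)uf(s)\in S$ and correcting it on the right by a return word $v\in\Gamma_S(t)$ with prescribed image in the group of right cosets of $\langle Z\rangle$; the existence of such $v$ is Proposition~\ref{propGamma}, which rests on Theorem~\ref{theoremJulien}, Theorem~\ref{propositionReturns} and the finite index basis property, and one then needs saturation (Theorem~\ref{propositionHcapF}) to pass from $\langle Z\rangle\cap S$ to $Z^*\cap S$. Uniform recurrence then requires the finiteness of $\RR_T(w)$, which is not a consequence of Proposition~\ref{propReturnsFinite} plus boundedness of $Z$; it is proved by decomposing $X=f(\RR_T(w))$ as $W\circ_\alpha U$ with $U=\RR_S(f(w))$, showing (Lemmas~\ref{lemma3}--\ref{lemma4}) that $W$ is a finite $H$-maximal bifix code of degree $d_Z(S)$ in the derived set $H$, which in turn uses that $H$ is a uniformly recurrent tree set (Theorem~\ref{propositionReturns}) together with Theorems~\ref{theoremGroupCode} and~\ref{theoremFIB}. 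Your closing parenthesis gestures toward this return-word route, but it is not carried out, and the main argument you actually propose would prove a false statement; as written the proposal does not establish recurrence or uniform recurrence of $T$.
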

Thus, for any uniformly recurrent tree set and any coding morphism $f$
for a finite $S$-maximal bifix code, the set $f^{-1}(S)$ is a
uniformly recurrent tree set. This statement has a stronger hypothesis
than Theorem~\ref{theoremNormal}  and a stronger conclusion.

We illustrate Theorem~\ref{theoremNormal} by the following example.

\begin{example}\label{exampleTribonacci2}
Let $T$ be as in Example~\ref{exampleTribonacci21}. 
The set $T$ is a uniformly recurrent tree
 set by Theorem~\ref{theoremNormal}.
\end{example}
We prove two preliminary results concerning the restriction
to a uniformly recurrent tree
 set of a morphism onto a finite group (Propositions
\ref{propositionGroup} and~\ref{propGamma}).

\begin{proposition}\label{propositionGroup}
Let $S$ be a uniformly recurrent tree
 set containing the alphabet $A$ and let $\varphi:A^*\rightarrow G$
be a morphism from $A^*$
onto a finite group $G$. Then $\varphi(S)=G$.
\end{proposition}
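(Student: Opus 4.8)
The plan is to show that the image subgroup $\varphi(S)$, which generates a subgroup $H$ of $G$, must in fact be all of $G$. First I would note that since $S$ is uniformly recurrent and contains $A$, it is recurrent, and therefore the set $\varphi(S)$ is closed under the group operation: for $u,w\in S$ there is $v$ with $uvw\in S$, so $\varphi(u)\varphi(v)\varphi(w)\in\varphi(S)$; combined with the fact that $S$ contains $1$ (so $1\in\varphi(S)$) and a compactness/finiteness argument, one checks $\varphi(S)$ is actually a subgroup $H$ of $G$. The real content is then to prove $H=G$, equivalently that the index $d=[G:H]$ equals $1$.

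The key idea is to realize $H$ via a group code and invoke the finite index basis property. Let $\psi:A^*\to G/\!\!\sim$ or rather consider the group code $Z$ of degree $d$ associated with $H$: by Proposition~\ref{propositionGroupAutomaton}, taking $M=\varphi^{-1}(H)$ gives $Z$, the minimal generating set of $M$, a group code (= maximal bifix code) of degree $d$. Now apply Theorem~\ref{theoremGroupCode} to $Z$ and $S$: the set $X=Z\cap S$ is a basis of a subgroup of index $d$ of the free group $F_A$. On the other hand, I claim $X=Z\cap S$ actually meets $S$ in a way forcing $d=1$: every word $w\in S$ satisfies $\varphi(w)\in H$ by the very definition of $H=\varphi(S)$, hence $w\in M=Z^*$, so $S\subseteq Z^*$. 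Thus every word of $S$ is a product of elements of $Z$, and in particular (using that $S$ is factorial and recurrent) each generator in $Z$ that is "used" lies in $S$; more precisely $\alp_Z$ considerations or a direct argument show $X=Z\cap S$ is an $S$-maximal bifix code whose $S$-degree can be computed. Since $S\subseteq Z^*$, the $S$-degree of $X$ is related to $d$, and $X$ being a basis of an index-$d$ subgroup of $F_A$ together with the finite index basis property (Theorem~\ref{theoremFIB}) forces the $S$-degree of $X$ to equal $d$; but any finite $S$-maximal bifix code contained in $S$ with $S\subseteq \langle$its generated monoid$\rangle$ in this strong sense must have $S$-degree $1$, whence $d=1$.

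Let me restructure the core argument more cleanly. Since $S\subseteq Z^*$ and $\alp_Z(Z\cap S)=Z$ would need checking, the cleanest route is: $H=\varphi(S)$ is a subgroup; suppose for contradiction $H\ne G$, so $d\ge 2$; the group code $Z$ of degree $d$ satisfies $Z^*=\varphi^{-1}(H)\supseteq S$ since $\varphi(S)=H$; but then no word of $S$ of maximal-"return"-type can have a parse of depth $d$ with respect to $Z\cap S$ — more directly, $X=Z\cap S$ is by Theorem~\ref{theoremGroupCode} a basis of an index-$d$ subgroup of $F_A$, while simultaneously $X\subseteq S\subseteq Z^*$ means $X$ generates (as a monoid inside $A^*$) a submonoid containing $S$; by the finite index basis property $X$ is the $S$-maximal bifix code of $S$-degree $d$, and its $S$-degree $d$ equals the maximal number of parses, but $S\subseteq Z^*\subseteq X^*\cdot(\text{prefixes})$... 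The genuinely hard part, and the step I would need to be most careful about, is extracting the contradiction that $S\subseteq Z^*$ is incompatible with $Z\cap S$ having $S$-degree $\ge 2$: intuitively if every word of $S$ already decomposes over $Z$, then over $X=Z\cap S$ every word of $S$ has a unique parse, forcing $S$-degree $1$ and hence $d=1$. I expect this to follow from Equation~\eqref{eqInternal} characterizing internal factors together with uniform recurrence (a long word of $S$ lying in $Z^*=X^*$ cannot be an internal factor of $X$, so it has $d$ parses, but being in $X^*$ it has the trivial parse — reconciling these pins down $d$). That reconciliation, via the kernel and degree characterization of Theorems~4.2.8 and 4.3.11 cited in the excerpt, is where the proof's weight lies.
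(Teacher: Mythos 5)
There is a genuine gap, and it sits at the very first step. You assert that $H=\varphi(S)$ is a subgroup of $G$, but this claim is essentially the proposition itself: since $A\subset S$ and $\varphi$ is onto, $\varphi(A)$ generates $G$, so the moment $\varphi(S)$ is known to be a subgroup you are done, with no need for group codes at all. Your justification of the claim does not work: recurrence only gives, for $u,w\in S$, some $v$ with $uvw\in S$, i.e.\ $\varphi(u)\varphi(v)\varphi(w)\in\varphi(S)$ for \emph{some} $v$ depending on $u,w$; this is not closure of $\varphi(S)$ under the product, and the appeal to ``compactness/finiteness'' cannot supply it (finiteness of $G$ only upgrades a product-closed nonempty subset to a subgroup, which is precisely the property you have not established). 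If instead $H$ is read as the subgroup \emph{generated} by $\varphi(S)$, then $H=G$ is trivial, the associated code is just $Z=A$ with $d=1$, and nothing is learned about the set $\varphi(S)$ itself. In either reading the subsequent machinery does not repair the hole.

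The second half also relies on a false mechanism: from $S\subset Z^*$ you want to conclude that every word of $S$ has a unique parse with respect to $X=Z\cap S$, hence $S$-degree $1$, hence $d=1$. Membership in $X^*$ does not bound the number of parses: for instance with $X=S\cap A^2$ (degree $2$), a long even-length word of $S$ lies in $X^*$ and still has two parses, in accordance with \eqref{eqInternal}. The paper's actual proof goes differently: it takes the group code $Z$ with $Z^*=\varphi^{-1}(1)$ (the stabilizer of the identity, not of $\varphi(S)$), so that by Theorem~\ref{theoremGroupCode} and Theorem~\ref{theoremFIB} the code $X=Z\cap S$ is finite, $S$-maximal bifix of $S$-degree $\Card(G)$; a word $x\in X$ of maximal length then has exactly $\Card(G)$ parses, i.e.\ $\Card(G)$ suffixes that are prefixes of $X$, and one checks these suffixes have pairwise distinct images under $\varphi$ (two comparable such suffixes with equal image would differ by a word of $Z^*$, forcing them equal). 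Since these suffixes are factors of $x$ and hence lie in $S$, their images exhaust $G$, giving $\varphi(S)=G$. The tree hypothesis enters exactly through Theorem~\ref{theoremGroupCode}; your outline never isolates a step where it is genuinely used, which is another sign the argument as sketched cannot be completed.
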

\begin{proof}
Since the submonoid $\varphi^{-1}(1)$ is right and left unitary,
there is a bifix code $Z$ such that $Z^*=\varphi^{-1}(1)$.
Let $X=Z\cap S$. By Theorem~\ref{theoremGroupCode}, $X$ is a basis of a subgroup of index
$\Card(G)$.
Let $x$ be a word of $X$ of maximal length (since $X$ is a basis
of a subgroup of finite index,
it is finite). Then $x$ is not
an internal factor of $X$ and thus it has $\Card(G)$ parses.
Let $S(x)$ be the set of
 suffixes of $x$ which are prefixes of $X$. If $s,t\in S(x)$, then
they are comparable for the suffix order. Assume for example
that $s=ut$. If $\varphi(s)=\varphi(t)$, then $u\in X^*$ which
implies $u=1$ since $s$ is a prefix of $X$. Thus all elements
of $S(x)$ have distinct images by $\varphi$. Since $S(x)$ has
$\Card(G)$ elements, this forces $\varphi(S(x))=G$ and thus
$\varphi(S)=G$ since $S(x)\subset S$.
\end{proof}
We illustrate the proof on the following example.
\begin{example}
Let $A=\{a,b\}$ and let
$\varphi$ be the morphism from $A^*$ onto the symmetric group
$G$ on $3$ elements defined by $\varphi(a)=(12)$ and
$\varphi(b)=(13)$. Let $Z$ be the group code such that
$Z^*=\varphi^{-1}(1)$.
The group automaton corresponding to the regular
representation of $G$ is represented in
Figure~\ref{figGroupAutomaton} (this automaton has $G$
as set of states and $g\cdot a=g\varphi(a)$ for every $g\in G$ and $a\in A$).
 Let $S$ be the Fibonacci set. 
\begin{figure}[hbt]
\gasset{Nadjust=wh}\centering
\begin{picture}(60,30)(0,-3)
\node(beta)(0,20){$(13)$}\node(1)(20,20){$(1)$}\node(alpha)(40,20){$(12)$}
\node(alphabeta)(60,20){$(123)$}
\node(betaalpha)(20,0){$(132)$}\node(alphabetaalpha)(40,0){$(23)$}

\drawedge[curvedepth=3](beta,1){$b$}\drawedge[curvedepth=3,ELside=r](1,beta){$b$}
\drawedge[curvedepth=3](1,alpha){$a$}\drawedge[curvedepth=3](alpha,1){$a$}
\drawedge[curvedepth=3](alpha,alphabeta){$b$}\drawedge[curvedepth=3,ELside=r](alphabeta,alpha){$b$}
\drawedge[curvedepth=3](beta,betaalpha){$a$}\drawedge[curvedepth=3](betaalpha,beta){$a$}
\drawedge[curvedepth=3](betaalpha,alphabetaalpha){$b$}
\drawedge[curvedepth=3](alphabetaalpha,betaalpha){$b$}
\drawedge[curvedepth=3](alphabetaalpha,alphabeta){$a$}
\drawedge[curvedepth=3](alphabeta,alphabetaalpha){$a$}
\end{picture}
\caption{The group automaton corresponding to the regular
  representation of $G$.}\label{figGroupAutomaton}
\end{figure}
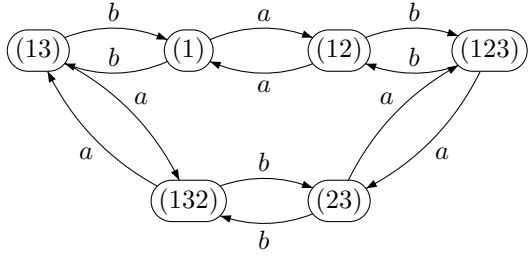
The code $X=Z\cap S$
is represented in Figure~\ref{figCodeX}.
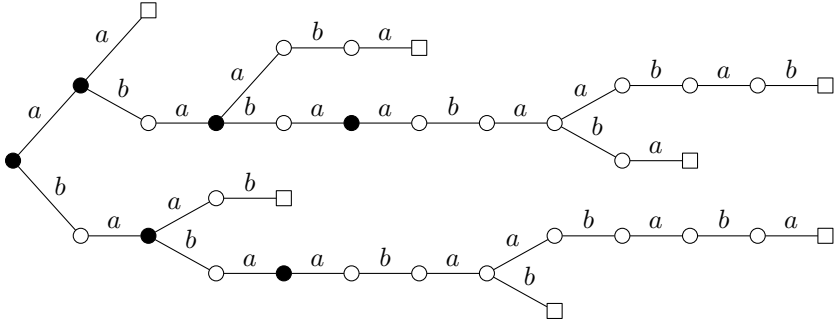
\begin{figure}[hbt]
\centering
\gasset{AHnb=0,Nadjust=wh}
\begin{picture}(130,45)
\node[Nfill=y](1)(0,20){}
\node[Nfill=y](a)(9,30){}\node(b)(9,10){}
\node[Nmr=0](aa)(18,40){}\node(ab)(18,25){}
\node[Nfill=y](ba)(18,10){}
\node[Nfill=y](aba)(27,25){}
\node(baa)(27,15){}\node(bab)(27,5){}
\node(abaa)(36,35){}\node(abab)(36,25){}
\node[Nmr=0](baab)(36,15){}\node[Nfill=y](baba)(36,5){}
\node(abaab)(45,35){}\node[Nfill=y](ababa)(45,25){}
\node(babaa)(45,5){}
\node[Nmr=0](abaaba)(54,35){}\node(ababaa)(54,25){}
\node(babaab)(54,5){}
\node(ababaab)(63,25){}
\node(babaaba)(63,5){}
\node(ababaaba)(72,25){}
\node(babaabaa)(72,10){}\node[Nmr=0](babaabab)(72,0){}
\node(ababaabaa)(81,30){}\node(ababaabab)(81,20){}
\node(babaabaab)(81,10){}
\node(ababaabaab)(90,30){}\node[Nmr=0](ababaababa)(90,20){}
\node(babaabaaba)(90,10){}
\node(ababaabaaba)(99,30){}
\node(babaabaabab)(99,10){}
\node[Nmr=0](ababaabaabab)(108,30){}
\node[Nmr=0](babaabaababa)(108,10){}

\drawedge(1,a){$a$}\drawedge(1,b){$b$}
\drawedge(a,aa){$a$}\drawedge(a,ab){$b$}
\drawedge(b,ba){$a$}
\drawedge(ab,aba){$a$}
\drawedge(ba,baa){$a$}\drawedge(ba,bab){$b$}
\drawedge(aba,abaa){$a$}\drawedge(aba,abab){$b$}
\drawedge(baa,baab){$b$}\drawedge(bab,baba){$a$}
\drawedge(abaa,abaab){$b$}\drawedge(abab,ababa){$a$}\drawedge(baba,babaa){$a$}
\drawedge(abaab,abaaba){$a$}\drawedge(ababa,ababaa){$a$}
\drawedge(babaa,babaab){$b$}
\drawedge(ababaa,ababaab){$b$}
\drawedge(babaab,babaaba){$a$}
\drawedge(ababaab,ababaaba){$a$}
\drawedge(babaaba,babaabaa){$a$}\drawedge(babaaba,babaabab){$b$}
\drawedge(ababaaba,ababaabaa){$a$}\drawedge(ababaaba,ababaabab){$b$}
\drawedge(babaabaa,babaabaab){$b$}
\drawedge(ababaabaa,ababaabaab){$b$}\drawedge(ababaabab,ababaababa){$a$}
\drawedge(babaabaab,babaabaaba){$a$}
\drawedge(ababaabaab,ababaabaaba){$a$}
\drawedge(babaabaaba,babaabaabab){$b$}
\drawedge(ababaabaaba,ababaabaabab){$b$}
\drawedge(babaabaabab,babaabaababa){$a$}
\end{picture}
\caption{The code $X=Z\cap S$.}\label{figCodeX}
\end{figure}
The word $w=ababa$ is not an internal factor of $X$. All its $6$
suffixes (indicated in black in Figure~\ref{figCodeX}) are proper prefixes of $X$ and their images by $\varphi$
are the $6$ elements of the group $G$.
\end{example}
\begin{proposition}\label{propGamma}
Let $S$ be a uniformly recurrent tree set containing the alphabet $A$ and let
$\varphi:A^*\rightarrow G$ be a morphism from $A^*$ onto a finite
group $G$. For any $w\in S$, one has $\varphi(\Gamma_S(w)\cup \{1\})=G$.
\end{proposition}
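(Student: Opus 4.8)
The plan is to reduce the statement to Proposition~\ref{propositionGroup} by transporting everything to the derived set of $S$ at $w$. Let $f\colon B^*\to A^*$ be a coding morphism for $X=\RR_S(w)$, and let $T=D_f(S)=f^{-1}(w^{-1}S)$ be the corresponding derived set. By Theorem~\ref{propositionReturns}, $T$ is again a uniformly recurrent tree set, and by Proposition~\ref{propositionRecurrent} we have $T=f^{-1}(\Gamma_S(w))\cup\{1\}$. Since $\RR_S(w)\subseteq\Gamma_S(w)$, every letter of $B$ lies in $T$, so $T$ contains the alphabet $B$. Moreover, by Equation~\eqref{eqGamma1} one has $\Gamma_S(w)\subseteq\RR_S(w)^*=f(B^*)$, so that $f(T)=\Gamma_S(w)\cup\{1\}$ and hence $\varphi(\Gamma_S(w)\cup\{1\})=\varphi(f(T))=\psi(T)$, where $\psi=\varphi\circ f\colon B^*\to G$.

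The crux is then to check that $\psi$ maps $B^*$ \emph{onto} $G$. This is where the tree-set hypothesis re-enters, through Theorem~\ref{theoremJulien}: since $X=\RR_S(w)$ is a basis of the free group $F_A$ and $\varphi$ extends to a surjective morphism $F_A\to G$ (each letter being sent to an invertible element of the finite group $G$), the set $\psi(B)=\varphi(X)$ generates $G$; being a submonoid of the finite group $G$, $\psi(B^*)$ is a subgroup, hence all of $G$.

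With this in hand, Proposition~\ref{propositionGroup}, applied to the uniformly recurrent tree set $T$ containing the alphabet $B$ and to the surjective morphism $\psi\colon B^*\to G$, yields $\psi(T)=G$; by the identification above this gives $\varphi(\Gamma_S(w)\cup\{1\})=G$, as desired.

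I do not expect a genuine obstacle here: the two facts doing the real work — that the derived set of a uniformly recurrent tree set is again one (Theorem~\ref{propositionReturns}), and that a finite group is exhausted by the image of such a set under any surjection onto it (Proposition~\ref{propositionGroup}) — are already available. The only points needing a little care are the surjectivity of $\psi$ and the bookkeeping identifying $\varphi(\Gamma_S(w)\cup\{1\})$ with $\psi(D_f(S))$ via Proposition~\ref{propositionRecurrent} and Equation~\eqref{eqGamma1}. A more hands-on alternative, mimicking the proof of Proposition~\ref{propositionGroup} with the bifix code $Z$ such that $Z^*=\varphi^{-1}(1)$ and trying to locate a suitable return word of maximal length, looks more delicate, so the reduction through the derived set seems the cleanest route.
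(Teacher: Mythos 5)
Your proof is correct and follows essentially the same route as the paper: both pass to the derived set $T=f^{-1}(w^{-1}S)$, use Theorem~\ref{theoremJulien} to see that $\varphi\circ f$ is onto $G$ (image generates $G$, and a submonoid of a finite group is a subgroup), invoke Theorem~\ref{propositionReturns} to know $T$ is a uniformly recurrent tree set, and then apply Proposition~\ref{propositionGroup}. Your extra bookkeeping (that $T$ contains the alphabet $B$ and that $f(T)=\Gamma_S(w)\cup\{1\}$ via Proposition~\ref{propositionRecurrent} and Equation~\eqref{eqGamma1}) only makes explicit what the paper leaves implicit.
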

\begin{proof}
Let $\alpha:B^*\rightarrow A^*$ be a coding morphism for $\RR_S(w)$.
Then $\beta=\varphi\circ \alpha:B^*\rightarrow G$ is a morphism from
$B^*$ to $G$. By Theorem~\ref{theoremJulien}, 
the set $\RR_S(w)$ is a basis of the free group on $A$. Thus
$\langle\alpha(B)\rangle=F_A$. This
implies that $\beta(F_B)=G$. This implies that
$\beta(B)$ generates $G$. Since $G$
is a finite group,  $\beta(B^*)$ is a subgroup of $G$ and thus
$\beta(B^*)=G$.
By Theorem~\ref{propositionReturns}, the set
$H=\alpha^{-1}(w^{-1}S)$ is a uniformly recurrent tree set. Thus $\beta(H)=G$
by Proposition~\ref{propositionGroup}. This implies that
$\varphi(\Gamma_S(w)\cup \{1\})=G$.
\end{proof}

\subsection{Proof of the main result}

Let $S$ be a uniformly recurrent tree set containing $A$
and let $f:B^*\rightarrow A^*$ be a coding morphism for a finite
$S$-maximal bifix code $Z$. By Theorem~\ref{theoremFIB}, $Z$ is a basis
of a subgroup of index $d_Z(S)$ and, by Theorem~\ref{propositionHcapF},
the submonoid $Z^*$ is saturated in $S$.

We first prove the following lemma. 
\begin{lemma}\label{lemma1}
Let $S$ be a uniformly recurrent tree set containing $A$
 and let  $f:B^*\rightarrow A^*$ be
 a coding morphism for  an $S$-maximal
bifix code $Z$.
The set $T=f^{-1}(S)$ is recurrent. 
\end{lemma}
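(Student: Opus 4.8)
The goal is to show that $T = f^{-1}(S)$ is recurrent, i.e. that for any two words $u,v \in T$ there is a word $t$ with $utv \in T$. Unwinding the definition, $u,v \in T$ means $f(u), f(v) \in S \cap Z^*$, and $utv \in T$ means $f(u)f(t)f(v) \in S \cap Z^*$. Since $S$ is recurrent, we can certainly find a word $p \in A^*$ with $f(u)\,p\,f(v) \in S$; the difficulty is to arrange that the connecting word $p$ is itself an image $f(t)$ of a word over $B$, equivalently that $p \in Z^*$. So the plan is: start from an arbitrary connecting word given by recurrence of $S$, then modify it — prepending/appending suitable words — so that the whole product lands in $Z^*$, using the fact (stated just before the lemma) that $Z^*$ is saturated in $S$: $Z^* \cap S = \langle Z \rangle \cap S$.

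\textbf{Step 1: reduce to a membership-in-subgroup condition.} Let $\varphi : A^* \to F_A \to G$ where $G = F_A/\langle Z\rangle$ if $\langle Z\rangle$ were normal; more robustly, since $Z$ is a group code by the finite index basis property (Theorem~\ref{theoremFIB}), $Z^* = \varphi^{-1}(1)$ for a morphism $\varphi : A^* \to G$ onto a finite group $G$ with $|G| = d_Z(S)$. Then for any word $y \in S$ one has $y \in Z^*$ if and only if $\varphi(y) = 1$. Now $f(u), f(v) \in Z^*$, so what we need is a word $t \in B^*$ such that $\varphi(f(u)f(t)f(v)) = 1$ and the product lies in $S$; since $\varphi(f(u)) = \varphi(f(v)) = 1$, this reduces to finding $s = f(t) \in Z^*$ with $f(u)\,s\,f(v) \in S$, i.e. with $\varphi(s)$ trivial and the concatenation staying in $S$.

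\textbf{Step 2: use recurrence of $S$ together with Proposition~\ref{propGamma}.} Set $w = f(v)$. By recurrence of $S$ there is some word $s_0$ with $f(u)\,s_0\,w \in S$, but we need more: we want a connecting segment that (a) keeps us in $S$ and (b) is trivial in $G$. Here the key tool is Proposition~\ref{propGamma}: for any $w \in S$, $\varphi(\Gamma_S(w) \cup \{1\}) = G$. Apply this with $w = f(v)$: there exists $r \in \Gamma_S(f(v))$ with $\varphi(r) = \varphi\big(\text{(the element needed to correct)}\big)^{-1}$ — more precisely, first use recurrence of $S$ to get $f(u)\,p\,f(v) \in S$ for some $p$, let $g = \varphi(p) \in G$, and then choose $r \in \Gamma_S(f(v))$ with $\varphi(r) = g^{-1}$. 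By definition of $\Gamma_S$, $f(v)\,r \in S \cap A^+ f(v)$, so $f(v)\,r$ ends with $f(v)$; prepending, $f(u)\,p\,f(v)\,r \in S$ and this word has $\varphi$-image $\varphi(f(u))\,g\,\varphi(f(v))\,g^{-1} = g \cdot g^{-1} = 1$ (using $\varphi(f(u)) = \varphi(f(v)) = 1$). Write $f(v)\,r = f(v)\,r$; since it ends in $f(v) = f(v)$ we can also write the whole word as $f(u)\,p'\,f(v)$ where $p' = p\,f(v)\,r\,f(v)^{-1}$ — but $p'$ need not be a positive word, so instead keep the word $N := f(u)\,(p\,f(v)\,r)$, noting it lies in $S$, ends with $f(v)$, and has trivial $\varphi$-image.

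\textbf{Step 3: land in $Z^*$ via saturation, and pull back.} The word $N = f(u)\,p\,f(v)\,r$ is in $S$ and $\varphi(N) = 1$ means $N \in \langle Z\rangle \cap A^*$; since $Z^*$ is saturated in $S$ (Theorem~\ref{propositionHcapF}), $\langle Z\rangle \cap S = Z^* \cap S$, hence $N \in Z^*$. Symmetrically handle the left end: we may first have ensured, by the same argument applied on the left using left return words, that $N$ begins with $f(u)$ — but in fact it already does by construction. So $N \in Z^*$ starts with $f(u) \in Z^*$ and ends with $f(v)\,r$ where $f(v) \in Z^*$; since $Z^*$ is a free monoid on $Z$ (so both left and right unitary), $N = f(u) \cdot m$ with $m \in Z^*$, and $m = m' \cdot (f(v)r)$ is problematic only if $f(v)r \in Z^*$ — which we should arrange directly: instead of an arbitrary $r \in \Gamma_S(f(v))$ with the right $\varphi$-value, note $\Gamma_S(f(v)) \cup \{1\} \subset \RR_S(f(v))^*$ (Equation~\eqref{eqGamma1}) but that is relative to $f(v)$, not $Z$. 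The clean fix: choose $N \in Z^*$ directly by applying saturation, then set $t = f^{-1}(N\,f(v)^{-1}\cdot\text{(correction)})$. \emph{The main obstacle} I expect is exactly this bookkeeping at the two ends — ensuring that after correcting the $\varphi$-image the connecting word can be split off cleanly inside the free monoid $Z^*$ so that $t \in B^*$ is well-defined. This is handled by combining right-unitarity of $Z^*$ (so that $f(u) \in Z^*$ and $N \in Z^*$ give $f(u)^{-1}N \in Z^*$) with the symmetric left-unitary statement for the $f(v)$-suffix, possibly at the cost of one further application of Proposition~\ref{propGamma} on the left. Once $s := f(u)^{-1} N f(v)^{-1} \in Z^*$ is established (with $f(u)\,s\,f(v) = N \in S$), we set $t = f^{-1}(s) \in B^*$ and conclude $u\,t\,v \in f^{-1}(S) = T$, proving $T$ recurrent.
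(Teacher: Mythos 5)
Your overall strategy is the same as the paper's (connect $f(u)$ and $f(v)$ inside $S$ by recurrence, correct the group-theoretic defect using Proposition~\ref{propGamma}, invoke saturation from Theorem~\ref{propositionHcapF} to land in $Z^*$, then split off the middle word by right/left unitarity of $Z^*$), but there is a genuine gap at the crucial step. In Step 2 you choose $r\in\Gamma_S(f(v))$, i.e.\ a return word to $f(v)$ alone, and then assert ``prepending, $f(u)\,p\,f(v)\,r\in S$''. This is not justified: from $f(u)pf(v)\in S$ and $f(v)r\in S$ one cannot conclude that the overlapped concatenation $f(u)pf(v)r$ lies in $S$; a (uniformly) recurrent set is closed under taking factors, not under gluing two of its elements along a common factor. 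This is exactly why the paper applies Proposition~\ref{propGamma} not to $f(v)$ but to the \emph{whole} word $t=f(u)pf(v)$: a return word $r\in\Gamma_S(t)$ gives $tr\in S$ by definition, and $tr$ still ends with $t$, hence with $f(v)$, which is all that the subsequent unitarity bookkeeping needs. With that change, your Step 3 worries largely evaporate: if $N=tr\in Z^*\cap S$ begins with $f(u)\in Z^*$ and ends with $f(v)\in Z^*$, then right unitarity gives $f(u)^{-1}N\in Z^*$, writing $f(u)^{-1}N=qf(v)$ left unitarity gives $q\in Z^*$, and $q\in S$ since $S$ is factorial, so $t=f^{-1}(q)$ works; no second correction on the left is needed.

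A secondary error is in Step 1: it is not true that $Z^*=\varphi^{-1}(1)$ for a morphism $\varphi$ onto a finite group, nor that $\varphi(f(u))=\varphi(f(v))=1$. Theorem~\ref{theoremFIB} makes $Z$ a basis of a finite-index subgroup $\langle Z\rangle$ of $F_A$, but this subgroup need not be normal, and even $\langle Z\rangle\cap A^*$ is in general strictly larger than $Z^*$ (only the intersection with $S$ is controlled, via saturation); e.g.\ for the Fibonacci set and $Z=S\cap A^2$ one has $bb\in\varphi^{-1}(1)\setminus Z^*$. The paper takes $\varphi$ to be the representation of $F_A$ on the right cosets of $\langle Z\rangle$ and uses only the implication $\varphi(x)=1\Rightarrow x\in\langle Z\rangle$; it never needs the images of $f(u)$, $f(v)$ to be trivial. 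Your version is repairable (since $f(u),f(v)\in\langle Z\rangle$, it suffices to force the connecting word into $\langle Z\rangle$, and Proposition~\ref{propGamma} lets you hit whatever correcting value of $\varphi$ is required), but as written the reduction in Step 1 is false, and the missing membership $f(u)pf(v)r\in S$ in Step 2 is the essential defect.
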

\begin{proof} Since $S$ is factorial, the set $T$ is factorial.
Let $r,s\in T$. Since $S$
is recurrent, there exists $u\in S$ such that $f(r)uf(s)\in S$. Set 
$t=f(r)uf(s)$.
Let $G$ be the representation of $F_A$ on the right cosets 
of $\langle Z\rangle$.
Let $\varphi:A^*\rightarrow G$ be the natural morphism from $A^*$ onto
$G$.
By Proposition~\ref{propGamma}, we have $\varphi(\Gamma_S(t)\cup \{1\})=G$.
Let $v\in\Gamma_S(t)$ be such that $\varphi(v)$ is the inverse
of $\varphi(t)$. Then $\varphi(tv)$ is the identity of $G$ and thus
$tv\in \langle Z\rangle$. 

Since $S$ is a tree set, it is acyclic and thus $Z^*$ is saturated in $S$
by Theorem~\ref{propositionHcapF}.
Thus $Z^*\cap S=\langle Z\rangle\cap S$.
This implies that $tv\in Z^*$. Since $tv\in A^*t$, we have
$f(r)uf(s)v=f(r)qf(s)$ and thus $uf(s)v=qf(s)$ for some $q\in S$. 
Since $Z^*$ is right unitary, $f(r),f(r)uf(s)v\in Z^*$ imply $uf(s)v=qf(s)\in
Z^*$.
In turn, since $Z^*$ is left unitary, $qf(s),f(s)\in Z^*$ imply $q\in
Z^*$
and thus $q\in Z^*\cap S$.
Let $w\in T$ be such that $f(w)=q$. Then $rws$ is in $T$. This
shows that $T$ is recurrent.
\end{proof}

We prove a series of lemmas. In each of them, we consider 
a uniformly recurrent tree set
$S$ containing $A$
and a coding morphism $f:B^*\rightarrow A^*$ for an $S$-maximal
bifix code $Z$. We set $T=f^{-1}(S)$. We choose $w\in T$ and set $v=f(w)$.
Let also
$Y=\RR_T(w)$. Then $Y$ is a $w^{-1}T$-maximal prefix code.
 Let $X=f(Y)$ or equivalently
$X=Y\circ_f Z$. Then, since $f(w^{-1}T)=v^{-1}S$, by
Proposition~\ref{propositionMaxPref} (i), $X$ is a $v^{-1}S$-maximal
prefix code.

Finally we set $U=\RR_S(v)$. Let $\alpha:C^*\rightarrow A^*$
be a coding morphism for $U$.
Since $X\subset\Gamma_S(v)$, we have $X\subset U^*$. Since $uU^*\cap
X\ne\emptyset$
for any $u\in U$, we have $\alp_U(X)=U$. Thus, by
Proposition~\ref{prop266},
we have $X=W\circ_\alpha U$ where $W$ is
the prefix code such that $\alpha(W)=X$.

\begin{lemma}\label{lemma3}
We have $X^*\cap v^{-1}S=U^*\cap Z^*\cap v^{-1}S$. 
\end{lemma}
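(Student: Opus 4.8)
The inclusion $X^*\cap v^{-1}S\subseteq U^*\cap Z^*\cap v^{-1}S$ is the easy direction: since $X\subset\Gamma_S(v)\subset U^*$ we get $X^*\subset U^*$, and since $X=f(Y)\subset Z^*$ we get $X^*\subset Z^*$, so the left-hand side is contained in all three sets on the right. The real content is the reverse inclusion: a word $t\in U^*\cap Z^*\cap v^{-1}S$ must be decoded by $X$.

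The plan is to take such a $t$ and show $t\in X^*$. Since $t\in v^{-1}S$, i.e.\ $vt\in S$, and $t\in\Gamma_S(v)\cup\{1\}$ would follow if $t\in U^*$ together with $vt\in S$ — indeed by Equation~\eqref{eqGamma1}, $\Gamma_S(v)\cup\{1\}=\RR_S(v)^*\cap v^{-1}S=U^*\cap v^{-1}S$, so immediately $t\in\Gamma_S(v)\cup\{1\}$. Now I would write $vt$ and use that $t\in Z^*$. The point is that $X=Y\circ_f Z$ is the first-return code computed \emph{inside} the decoded set: we want to factor $t$ over $X=f(Y)=f(\RR_T(w))$. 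Since $t\in Z^*$, write $t=f(s)$ for a unique $s\in B^*$; then $vt=f(w)f(s)=f(ws)$, and $vt\in S$ gives $ws\in f^{-1}(S)=T$, so $s\in w^{-1}T$. Moreover $t\in\Gamma_S(v)$ means $vt\in A^+v\cap S$, i.e.\ $f(ws)$ ends with $f(w)$; because $Z$ is a (bifix, hence suffix) code and $f(w),f(ws)\in Z^*$, the word $f(s)$ ends with $f(w)$ forces $ws$ to end with $w$, so $s\in\Gamma_T(w)\cup\{1\}$.

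Then I would invoke the analogue of Equation~\eqref{eqGamma1} for $T$ and $w$: since $Y=\RR_T(w)$ and $T$ is recurrent (Lemma~\ref{lemma1}), $\Gamma_T(w)\cup\{1\}=\RR_T(w)^*\cap w^{-1}T=Y^*\cap w^{-1}T$, so $s\in Y^*$. Applying $f$ gives $t=f(s)\in f(Y)^*=X^*$, and since also $t\in v^{-1}S$ we conclude $t\in X^*\cap v^{-1}S$, which is what we wanted.

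The step I expect to be the main obstacle is the claim that $f(s)$ ending with $f(w)$ (with both in $Z^*$) forces $s$ to end with $w$: this is exactly where one uses that $Z$ is a suffix code. Concretely, writing $f(s)=\zeta f(w)$ with $\zeta\in A^*$, one has $\zeta f(w)\in Z^*$ and $f(w)\in Z^*$, so by left unitarity of $Z^*$ (the submonoid of a prefix code is right unitary; the submonoid of a suffix code is left unitary) $\zeta\in Z^*$, say $\zeta=f(s')$ with $s'\in B^*$; then $f(s)=f(s'w)$ and injectivity of $f$ gives $s=s'w$. One must also be mildly careful with the degenerate case $s=1$ (equivalently $t=1$), which lies in $X^*$ trivially. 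I do not anticipate any other difficulty; the rest is bookkeeping with Equation~\eqref{eqGamma1}, the fact that $T$ is recurrent, and the definition $X=Y\circ_f Z=f(Y)$.
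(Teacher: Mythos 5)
Your proof follows the same route as the paper's: the easy inclusion, then for $t\in U^*\cap Z^*\cap v^{-1}S$ the two steps $t\in\Gamma_S(v)\cup\{1\}$ (which you get from Equation~\eqref{eqGamma1}, while the paper invokes Proposition~\ref{propositionRecurrent} via $\alpha^{-1}$ -- the two are equivalent) and $f^{-1}(t)\in\Gamma_T(w)\cup\{1\}\subset Y^*$, which is exactly the suffix-code/left-unitarity step that the paper leaves implicit in the sentence ``Since $x\in Z^*$, $f^{-1}(x)\in\Gamma_T(w)\cup\{1\}$''. One local inaccuracy in your write-up should be repaired: from $vt\in A^+v$ you pass to ``$f(s)$ ends with $f(w)$'' and factor $f(s)=\zeta f(w)$. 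That deduction can fail when $|t|<|v|$, which is possible because words of $\Gamma_S(v)$ may be shorter than $v$ when $v$ overlaps itself (in the Fibonacci set, $ba\in\RR_S(aba)$). The left-unitarity argument should instead be applied to the factorization $vt=u\,v$ with $u\in A^+$: since $vt=f(ws)\in Z^*$ and $v=f(w)\in Z^*$, left unitarity of $Z^*$ gives $u\in Z^*$, say $u=f(s')$ with $s'$ nonempty; then $f(ws)=f(s'w)$, so $ws=s'w\in B^+w$, and since $ws\in T$ this gives $s\in\Gamma_T(w)$, after which your conclusion $s\in Y^*$ and $t\in X^*$ goes through unchanged. (Your appeal to Lemma~\ref{lemma1} for the analogue of Equation~\eqref{eqGamma1} in $T$ is legitimate but not needed: that identity holds for any set.) With this one-line correction your argument is correct and coincides with the paper's proof, merely spelled out in more detail.
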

\begin{proof}
Indeed, the left handside
is clearly included in the right one. Conversely, consider $x\in
U^*\cap Z^*\cap v^{-1}S$. Since $x\in U^*\cap v^{-1}S$, $\alpha^{-1}(x)$ is in
$\alpha^{-1}(v^{-1}S)=
\alpha^{-1}(\Gamma_S(v))\cup \{1\}$ by Proposition~\ref{propositionRecurrent}.
Thus $x\in \Gamma_S(v)\cup \{1\}$. Since $x\in Z^*$, 
$f^{-1}(x)\in \Gamma_T(w)\cup \{1\}\subset Y^*$.
Therefore $x$ is in $f(Y^*)=X^*$.
\end{proof}
We set for simplicity $d=d_Z(S)$.  Set $H=\alpha^{-1}(v^{-1}S)$. By
Theorem~\ref{propositionReturns},
$H$ is a uniformly recurrent tree set.
\begin{lemma}\label{lemma4}
The set $W$ is a finite $H$-maximal bifix code and 
$d_W(H)=d$. 
\end{lemma}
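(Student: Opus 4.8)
The plan is to identify $W$ as the image under the coding morphism $\alpha$ (restricted appropriately) of the $S$-maximal bifix code $Z$, and to transport the relevant properties of $Z$ through the composition $X = W \circ_\alpha U$. First I would recall the setup: we have $X = Y \circ_f Z = W \circ_\alpha U$, and $H = \alpha^{-1}(v^{-1}S)$ is a uniformly recurrent tree set by Theorem~\ref{propositionReturns}. The key structural fact to exploit is Lemma~\ref{lemma3}, which says $X^* \cap v^{-1}S = U^* \cap Z^* \cap v^{-1}S$. Applying $\alpha^{-1}$ to this identity, and using that $\alpha^{-1}(U^* \cap v^{-1}S) = H$ (since $\alpha$ is a coding morphism for $U$ and $H = \alpha^{-1}(v^{-1}S)$), I would obtain $W^* \cap H = \alpha^{-1}(Z^*) \cap H$. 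This exhibits $W^*$ inside $H$ as the trace of the submonoid $\alpha^{-1}(Z^*)$.

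The next step is to show $W$ is a bifix code in $H$ and that it is $H$-maximal. For the bifix property: $W$ is a prefix code by construction (as in the paragraph before the lemma, coming from $X = W \circ_\alpha U$ with $X$ prefix, via Proposition~\ref{propositionMaxPref} or the composition results), and a symmetric argument using the suffix structure — or the observation that $Z$ is bifix and $\alpha^{-1}$ respects the relevant orders on $U^*$ — gives that $W$ is also a suffix code. Alternatively, since $Z^*$ is right and left unitary, $\alpha^{-1}(Z^*)$ is right and left unitary as a submonoid of $C^*$, hence generated by a bifix code; intersecting with the recurrent set $H$ and using saturation (Theorem~\ref{propositionHcapF}, as $H$ is a tree set hence acyclic) one checks this bifix code is exactly $W$. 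For $H$-maximality: apply Proposition~\ref{propositionMaxPref}(ii) to the composition $X = W \circ_\alpha U$ with the set $S' = v^{-1}S$ (which contains $X$ and $U$ and is recurrent, being a derived-type set of the recurrent $S$); since $X$ is $v^{-1}S$-maximal prefix, the converse direction there gives that $W$ is $\alpha^{-1}(v^{-1}S) = H$-maximal prefix. Being a finite bifix code that is $H$-maximal as a prefix code, and $H$ being recurrent, $W$ is $H$-maximal as a bifix code by \cite[Theorem 4.2.2]{BerstelDeFelicePerrinReutenauerRindone2012}.

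The last and most delicate step is the degree computation $d_W(H) = d = d_Z(S)$. Here I would use the finite index basis property: $H$ is a uniformly recurrent tree set, so by Theorem~\ref{theoremFIB} a finite $H$-maximal bifix code has $H$-degree equal to its index as a basis of a subgroup of $F_C$. Since $\alpha$ extends to an isomorphism $F_C \to F_A$ (as $U = \RR_S(v)$ is a basis of $F_A$ by Theorem~\ref{theoremJulien}), the index of $\langle W \rangle$ in $F_C$ equals the index of $\alpha(\langle W \rangle) = \langle X \rangle$ in $F_A$. But $\langle X \rangle = \langle Z \rangle$ because $X = Y \circ_f Z$ with $Y = \RR_T(w)$ a basis of $F_B$ (Theorem~\ref{theoremJulien} applied to the recurrent tree set $T$ — though one must be careful: $T$ is only known recurrent at this point, so instead I would argue $\langle X \rangle \subseteq \langle Z \rangle$ directly since $X \subseteq Z^*$, and $\langle X \rangle \supseteq \langle Z \rangle$ from $\alpha_Z(X) = Z$ or from $Y$ generating enough; more safely, $X = f(Y)$ and $Y$ being $w^{-1}T$-maximal together with the finite index basis property applied after the fact). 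Then $[F_A : \langle Z \rangle] = d_Z(S) = d$ by Theorem~\ref{theoremFIB} applied to $S$. Chaining these equalities yields $d_W(H) = [F_C : \langle W \rangle] = [F_A : \langle X \rangle] = [F_A : \langle Z \rangle] = d$. The main obstacle is making the identification $\langle X \rangle = \langle Z \rangle$ airtight without prematurely invoking properties of $T$ that have not yet been established at this point in the argument; I expect this is handled by noting $X \subseteq Z^*$ gives one inclusion, while $\alp_U(X) = U$ combined with the composition $X = Y \circ_f Z$ and surjectivity considerations gives the reverse, or simply by the fact that $Z$ is recovered from $X$ as $\alp_Z(X)$ via Proposition~\ref{prop266}.
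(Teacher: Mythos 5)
Your treatment of the prefix, suffix and maximality parts is essentially the paper's: $W$ is prefix because $X$ is, $H$-maximality follows from Proposition~\ref{propositionMaxPref}(ii) applied to $X=W\circ_\alpha U$ inside $v^{-1}S$ (this is the direct implication of (ii), not its converse, and no recurrence of $v^{-1}S$ is needed), and the suffix property comes from left-unitarity of $Z^*$ together with Lemma~\ref{lemma3}, which is exactly the identity $W^*\cap H=\alpha^{-1}(Z^*)\cap H$ you extract. The divergence, and the genuine gap, is in the computation of $d_W(H)$. Your route needs $\langle X\rangle=\langle Z\rangle$, and neither of your proposed justifications works: the inclusion $X\subset Z^*$ only gives $\langle X\rangle\subset\langle Z\rangle$, and $\alp_Z(X)=Z$ (Proposition~\ref{prop266}) says every $z\in Z$ occurs in some $Z$-factorization of a word of $X$, which does not imply $\langle Z\rangle\subset\langle X\rangle$ (already $Z=\{a,b\}$, $X=\{ab,ba\}$ defeats it). The remaining option, that $Y=\RR_T(w)$ is a basis of $F_B$ via Theorem~\ref{theoremJulien}, is circular: at this stage $T$ is only known to be recurrent (Lemma~\ref{lemma1}) and a tree set (Theorem~\ref{InverseImageTree}), not uniformly recurrent; uniform recurrence of $T$ is precisely what Lemma~\ref{lemma4}, through Lemma~\ref{lemma5} and Proposition~\ref{propReturnsFinite}, is meant to deliver in the proof of Theorem~\ref{theoremNormal}. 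Invoking it ``after the fact'' is not available. A second, related hole is the finiteness of $W$, which you assert but never prove, yet you need it both for the statement and to apply Theorem~\ref{theoremFIB} in the direction ``finite $H$-maximal bifix code of degree $e$ $\Rightarrow$ basis of index $e$''; note that finiteness cannot come from finiteness of $X$ or $Y$, since that is Lemma~\ref{lemma5}, proved from Lemma~\ref{lemma4}.

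The paper circumvents both difficulties by never comparing $\langle X\rangle$ with $\langle Z\rangle$. It takes the representation $\varphi:A^*\to G$ of $F_A$ on the right cosets of $\langle Z\rangle$, pulls it back through $\alpha$ to $\psi=\varphi\circ\alpha:C^*\to G$, which is surjective because $U=\RR_S(v)$ is a basis of $F_A$ (Theorem~\ref{theoremJulien} applied to $S$, which \emph{is} a uniformly recurrent tree set), and considers the group code $V$ of degree $d$ with $V^*=\psi^{-1}(J)$, $J=\varphi(Z^*)$. Using Lemma~\ref{lemma3} and saturation it shows $W=V\cap H$, and then Theorem~\ref{theoremGroupCode}, applied to the uniformly recurrent tree set $H$, yields in one stroke that $W$ is finite and is a basis of a subgroup of index $d$ of $F_C$, whence $d_W(H)=d$ by Theorem~\ref{theoremFIB}. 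If you want to salvage your plan, you should replace the step $\langle X\rangle=\langle Z\rangle$ by this group-code pullback (or find an independent, non-circular proof that $\langle X\rangle$ has index $d$ and that $W$ is finite); as it stands, the degree computation does not go through.
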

\begin{proof}
Since $X$ is a prefix code, $W$ is a prefix code. Since $X$
is $v^{-1}S$-maximal, $W$
is $\alpha^{-1}(v^{-1}S)$-maximal by
Proposition~\ref{propositionMaxPref} (ii)
and thus $H$-maximal since $H=\alpha^{-1}(v^{-1}S)$.

Let $x,y\in C^*$ be such that $xy,y\in W$.
Then $\alpha(xy),\alpha(y)\in X$ imply $\alpha(x)\in Z^*$.
Since on the other hand, $\alpha(x)\in U^*\cap v^{-1}S$,
we obtain by Lemma~\ref{lemma3} that $\alpha(x)\in X^*$.
This implies $x\in W^*$ and thus $x=1$ since $W$ is a prefix code.
This shows that $W$ is a suffix code.

To show that $d_W(H)=d$, we consider the morphism $\varphi$
from $A^*$ onto the group $G$ which is the representation of 
$F_A$ on the right cosets of $\langle Z\rangle$.
Set $J=\varphi(Z^*)$. Thus $J$ is a subgroup of index $d$
of $G$.
By Theorem~\ref{theoremJulien}, the set $U$ is a basis of the free
group on $A$. Therefore, since $G$ is a finite group,
 the restriction of $\varphi$
to $U^*$ is surjective.
Set $\psi=\varphi\circ\alpha$. Then $\psi:C^*\rightarrow G$
is a morphism which is onto since $U=\alpha(C)$ generates
the free group on $A$. Let $V$ be the group code of degree $d$ such
that $V^*=\psi^{-1}(J)$. Then $W=V\cap H$, as we will show now. 

Indeed, set $W'=V\cap H$.
If $t\in W$, then $\alpha(t)\in X$ and thus $\alpha(t)\in Z^*$.
Therefore $\psi(t)\in J$ and $t\in V^*$. This shows that $W\subset
W'^*$.
Conversely, if $t\in W'$, then $\psi(t)\in J$ and thus $\alpha(t)\in
Z^*$.
Since on the other hand $\alpha(t)\in U^*\cap S$, we obtain
$\alpha(t)\in X^*$ by Lemma \ref{lemma3}. This implies $t\in W^*$
and shows that $W'\subset W^*$. 

 Thus, since $H$ is a uniformly recurrent tree set, by
Theorem~\ref{theoremGroupCode}, $W$ is a basis of a subgroup
of index $d$. Thus
  $d_W(H)=d$ by Theorem~\ref{theoremFIB}.
\end{proof}
\begin{lemma}\label{lemma5}
The set $Y$ is finite.
\end{lemma}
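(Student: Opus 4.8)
The plan is to read off the finiteness of $Y$ from the work already done, in particular from Lemma~\ref{lemma4}. Recall the chain of bijections set up just before Lemma~\ref{lemma3}: the coding morphism $f:B^*\rightarrow A^*$ restricts to a bijection of $B$ onto $Z$, hence is injective on $B^*$, so $X=f(Y)$ is in bijection with $Y$ and $\Card(X)=\Card(Y)$; similarly the coding morphism $\alpha:C^*\rightarrow A^*$ for $U=\RR_S(v)$ is injective, so $X=\alpha(W)$ is in bijection with $W$ and $\Card(X)=\Card(W)$. Concatenating, $\Card(Y)=\Card(X)=\Card(W)$.

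So the single step I would carry out is: invoke Lemma~\ref{lemma4}, which asserts that $W$ is a \emph{finite} $H$-maximal bifix code; then $\Card(W)<\infty$ yields $\Card(X)=\Card(W)<\infty$ by injectivity of $\alpha$, and $\Card(Y)=\Card(X)<\infty$ by injectivity of $f$. That finishes the proof.

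There is essentially no remaining obstacle: the real difficulty has been front-loaded into Lemma~\ref{lemma4}, whose proof identified $W$ with $V\cap H$ for a group code $V$ of degree $d$ over the uniformly recurrent tree set $H$, and then used Theorem~\ref{theoremGroupCode} together with the uniform recurrence of $H$ to conclude that $W$ is finite. (Alternatively, once $H$ is known to be uniformly recurrent, any $H$-maximal bifix code has bounded word length, which also gives finiteness; but this is exactly the information already packaged in Lemma~\ref{lemma4}, so I would simply quote that lemma.) The only point worth stating explicitly in the write-up is that $Y$, $X$, and $W$ are pairwise in bijection through the injective morphisms $f$ and $\alpha$, so that finiteness transports from $W$ to $Y$.
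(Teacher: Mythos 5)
Your argument is correct and is essentially the paper's own proof: the paper likewise deduces finiteness of $X$ from the finiteness of $W$ (Lemma~\ref{lemma4}) together with that of $U$, and then gets $Y=f^{-1}(X)$ finite, which is just a repackaging of your cardinality transfer $\Card(Y)=\Card(X)=\Card(W)$ through the injective morphisms $f$ and $\alpha$. Nothing further is needed.
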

\begin{proof}
Since $W$ and $U$ are finite, the set $X=W\circ U$ is finite. Thus
$Y=f^{-1}(X)$ is finite.
\end{proof}

\begin{proofof}{of Theorem~\ref{theoremNormal}}
Let $S$ be a uniformly recurrent tree set containing $A$
and let $f:B^*\rightarrow A^*$ be a coding morphism for a finite
$S$-maximal bifix code $Z$.
Set $T=f^{-1}(S)$. 

By Lemma~\ref{lemma1}, $T$ is recurrent. By Lemma~\ref{lemma5} any set
of first return words $Y=\RR_T(w)$ is finite. Thus, by Proposition~\ref{propReturnsFinite}, $T$ is uniformly recurrent.
By Theorem~\ref{InverseImageTree}, $T$ is a tree set.

Thus we conclude that $T$ is a uniformly recurrent tree set.
\end{proofof}
Note that since $T$ is a uniformly recurrent tree set, the set $Y$ is not
only finite as asserted in Lemma~\ref{lemma5} but is in fact a basis
of the free group on $B$, by Theorem~\ref{theoremJulien}.

We illustrate the proof with the following example.
\begin{example}
Let $S$ be the Fibonacci set on $A=\{a,b\}$ and let $Z=S\cap A^2=\{aa,ab,ba\}$. Thus
$Z$ is an $S$-maximal bifix code of $S$-degree $2$.
Let $B=\{c,d,e\}$ and let $f:B^*\rightarrow A^*$ be the coding
morphism defined by $f(c)=aa$, $f(d)=ab$ and $f(e)=ba$.
Part of the set $T=f^{-1}(S)$ is represented in
Figure~\ref{figureSetK}
on the left (this set is the same as the set of Example~\ref{exampleDivision}
with $a,b,c$ replaced by $c,d,e$).

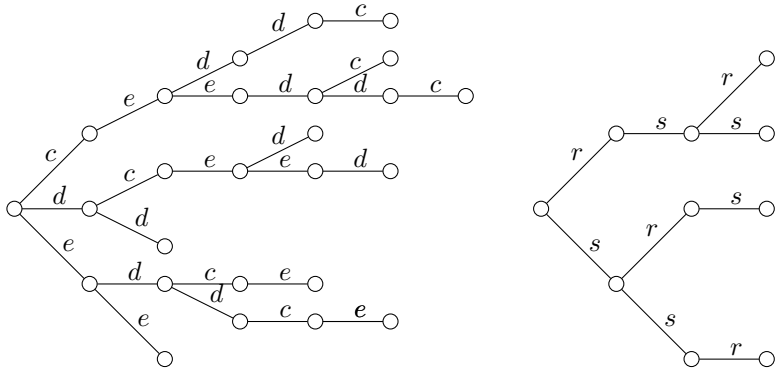
\begin{figure}[hbt]
\centering
\gasset{Nadjust=wh,AHnb=0,ELpos=60,ELdist=.6}
\begin{picture}(120,45)(0,10)
\put(0,0){
\begin{picture}(60,45)
\node(1)(0,30){}\node(c)(10,40){}\node(d)(10,30){}\node(e)(10,20){}

\node(ce)(20,45){}
\node(dc)(20,35){}\node(dd)(20,25){}
\node(ed)(20,20){}\node(ee)(20,10){}

\node(ced)(30,50){}\node(cee)(30,45){}
\node(dce)(30,35){}
\node(edc)(30,20){}\node(edd)(30,15){}

\node(cedd)(40,55){}\node(ceed)(40,45){}
\node(dced)(40,40){}\node(dcee)(40,35){}
\node(edce)(40,20){}\node(eddc)(40,15){}

\node(ceddc)(50,55){}\node(ceedc)(50,50){}
\node(ceedd)(50,45){}\node(dceed)(50,35){}
\node(eddce)(50,15){}

\node(ceeddc)(60,45){}

\drawedge(1,c){$c$}\drawedge(1,d){$d$}\drawedge(1,e){$e$}
\drawedge(c,ce){$e$}\drawedge(d,dc){$c$}\drawedge(d,dd){$d$}
\drawedge(e,ed){$d$}\drawedge(e,ee){$e$}
\drawedge(ce,ced){$d$}\drawedge(ce,cee){$e$}
\drawedge(dc,dce){$e$}
\drawedge(ed,edc){$c$}\drawedge(ed,edd){$d$}
\drawedge(ced,cedd){$d$}\drawedge(cee,ceed){$d$}
\drawedge(dce,dced){$d$}\drawedge(dce,dcee){$e$}
\drawedge(edc,edce){$e$}
\drawedge(edd,eddc){$c$}\drawedge(eddc,eddce){$e$}

\drawedge(cedd,ceddc){$c$}\drawedge(ceed,ceedc){$c$}
\drawedge(ceed,ceedd){$d$}\drawedge(dcee,dceed){$d$}
\drawedge(eddc,eddce){$e$}

\drawedge(ceedd,ceeddc){$c$}
\end{picture}
}
\put(70,0){
\begin{picture}(60,45)
\node(1)(0,30){}\node(r)(10,40){}\node(s)(10,20){}
\node(rs)(20,40){}\node(sr)(20,30){}\node(ss)(20,10){}
\node(rsr)(30,50){}\node(rss)(30,40){}
\node(srs)(30,30){}\node(ssr)(30,10){}

\drawedge(1,r){$r$}\drawedge(1,s){$s$}
\drawedge(r,rs){$s$}\drawedge(s,sr){$r$}\drawedge(s,ss){$s$}
\drawedge(rs,rsr){$r$}\drawedge(rs,rss){$s$}
\drawedge(sr,srs){$s$}\drawedge(ss,ssr){$r$}
\end{picture}
}
\end{picture}
\caption{The sets $T$ and $H$.}\label{figureSetK}
\end{figure}
The sets $Y=\RR_T(c)$ and $X=f(Y)$ are 
\begin{displaymath}
Y=\{eddc,eedc,eeddc\},\quad X=\{baababaa,babaabaa,babaababaa\}.
\end{displaymath}
On the other hand, the set $U=\RR_S(aa)$ is $U=\{baa,babaa\}$.
Let $C=\{r,s\}$ and let $\alpha:C^*\rightarrow A^*$ be the
coding morphism for $U$ defined by $\alpha(r)=baa$,
$\alpha(s)=babaa$. Part of the set $H=\alpha^{-1}((aa)^{-1}S)$
is represented in Figure~\ref{figureSetK} on the right.
Then we have $W=\{rs,sr,ss\}$ which is an $H$-maximal bifix code
of $H$-degree $2$ in agreement with Lemma~\ref{lemma4}.
\end{example}

The following example shows that the condition that $S$ is a tree
set is necessary.
\begin{example}\label{example511}
Let $S$ be the set of factors of $(ab)^*$. The set $S$
does not satisfy the tree condition since $G(\epsilon)$ is not
connected.
Let $X=\{ab,ba\}$.
The set $X$ is a finite $S$-maximal bifix code. Let
$f:\{u,v\}^*\rightarrow A^*$ be the coding morphism for $X$ defined by
$f(u)=ab$, $f(v)=ba$. Then $f^{-1}(S)=u^*\cup v^*$ is not recurrent.
\end{example}
\subsection{Composition of bifix codes}\label{sectionComposition}
In this section, we use Theorem~\ref{theoremNormal} to prove a result
showing that in a uniformly recurrent tree set, the degrees of the terms of a composition
of maximal bifix codes are multiplicative 
(Theorem~\ref{theoremCompositionBifix}).

 The following result
 is proved in~\cite[Proposition 11.1.2]{BerstelPerrinReutenauer2009} for a more
general class of codes (including all finite codes and not only
finite bifix codes), but in the case of $S=A^*$.
\begin{theorem}\label{theoremCompositionBifix}
Let $S$ be a uniformly recurrent tree set and let $X,Z\subset S$ be finite bifix codes such
that $X$ decomposes into $X=Y\circ_f Z$ where $f$ is a coding morphism for $Z$.
 Set
$T=f^{-1}(S)$. Then $X$ is an $S$-maximal bifix code 
if and only if  $Y$ is a $T$-maximal bifix code and $Z$ is an
$S$-maximal bifix code.
Moreover, in this case
\begin{equation}
d_X(S)=d_Y(T)d_Z(S). \label{eqDegreesMult}
\end{equation}
\end{theorem}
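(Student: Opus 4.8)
The plan is to treat the two assertions of the theorem in turn: first the equivalence between $S$-maximality of $X$ and the conjunction ``$Y$ is $T$-maximal and $Z$ is $S$-maximal'', and then the degree formula \eqref{eqDegreesMult}. In both cases the strategy is to pass from bifix codes to prefix codes and then to invoke the finite index basis property.

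For the equivalence, I would first record the standard facts that $Y$ is a prefix (resp. suffix) code whenever $X$ is, that $Y$ is finite (being in bijection with $X$ through $f$), and that $Y,Z$ are composable through $f$ by hypothesis; so Proposition~\ref{propositionMaxPref}(ii) applies. Since $S$ is recurrent, a finite bifix code contained in $S$ is an $S$-maximal bifix code exactly when it is an $S$-maximal prefix code \cite[Theorem 4.2.2]{BerstelDeFelicePerrinReutenauerRindone2012}. Hence, if $X$ is an $S$-maximal bifix code, Proposition~\ref{propositionMaxPref}(ii) gives that $Z$ is an $S$-maximal prefix, hence bifix, code, and that $Y$ is an $f^{-1}(S)$-maximal prefix code, i.e., a $T$-maximal prefix code; then, $Z$ being a finite $S$-maximal bifix code, Theorem~\ref{theoremNormal} makes $T$ a uniformly recurrent tree set, so $Y$ is also $T$-maximal as a bifix code. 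Conversely, starting from $Y$ a $T$-maximal bifix code and $Z$ an $S$-maximal bifix code, Theorem~\ref{theoremNormal} again shows $T$ recurrent, so $Y$ and $Z$ are maximal prefix codes (in $T$ and in $S$ respectively), and the converse half of Proposition~\ref{propositionMaxPref}(ii), which is valid because $S$ is recurrent, yields that $X$ is an $S$-maximal prefix code, hence an $S$-maximal bifix code.

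For the degree formula, I would work under these (now established) equivalent maximality conditions. Write $A=\alp(S)$ and $B=\alp(T)$; note that $B\subset T$, since $f(b)\in Z\subset S$ for each $b\in B$. Theorem~\ref{theoremFIB} applied to $S$ says that $Z$ is a basis of a subgroup $\langle Z\rangle$ of index $d_Z(S)$ of $F_A$ and that $X$ is a basis of a subgroup $\langle X\rangle$ of index $d_X(S)$; moreover, by Theorem~\ref{theoremNormal} the set $T$ is a uniformly recurrent tree set containing $B$, so Theorem~\ref{theoremFIB} applied to $T$ says that $Y$ is a basis of a subgroup $\langle Y\rangle$ of index $d_Y(T)$ of $F_B$. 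The key observation is that, $Z$ being a basis of $\langle Z\rangle$, the coding morphism $f$ extends to a group isomorphism from $F_B$ onto $\langle Z\rangle$; consequently $\langle X\rangle=\langle f(Y)\rangle=f(\langle Y\rangle)$ has index $d_Y(T)$ inside $\langle Z\rangle$. Since $\langle X\rangle\subset\langle Z\rangle\subset F_A$, multiplicativity of the index gives $[F_A:\langle X\rangle]=[F_A:\langle Z\rangle]\,[\langle Z\rangle:\langle X\rangle]=d_Z(S)\,d_Y(T)$, and the left-hand side equals $d_X(S)$ again by Theorem~\ref{theoremFIB}; this is exactly \eqref{eqDegreesMult}.

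The step I expect to require the most care is the claim that $f$ induces an isomorphism from $F_B$ onto $\langle Z\rangle$. This is where the hypothesis that $S$ is a uniformly recurrent tree set is genuinely used: for an arbitrary code $Z$ the subgroup $\langle Z\rangle$ need not be free on $Z$, and the index computation would break down; it is legitimate here precisely because, by Theorem~\ref{theoremFIB}, a finite $S$-maximal bifix code in such a set is a basis of a subgroup of $F_A$. Everything else — that $T$ is again a uniformly recurrent tree set containing its own alphabet $B$, so that Theorem~\ref{theoremFIB} applies to it, and the multiplicativity of group indices — is routine. A more combinatorial treatment of \eqref{eqDegreesMult}, counting parses as in \cite[Proposition 11.1.2]{BerstelPerrinReutenauer2009}, is also possible, but the group-theoretic route above is shorter given the machinery already developed.
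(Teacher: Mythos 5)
Your proof is correct, and the first half (the maximality equivalence) is essentially the paper's own argument: Proposition~\ref{propositionMaxPref}(ii), the equivalence in recurrent sets between maximality as a bifix code and as a prefix code, and Theorem~\ref{theoremNormal} to supply the recurrence of $T$ in both directions. For the degree formula, however, you take a genuinely different route. The paper proves \eqref{eqDegreesMult} combinatorially: it picks $w\in S$ long enough that the middle part $x$ of any $X$-parse of $w$ is not a factor of $X$, maps each $X$-parse to a $Z$-parse, and shows each fiber has exactly $d_Y(T)$ elements (using the recurrence of $T$, obtained from Theorem~\ref{theoremNormal}, and the characterization of the degree via internal factors), in the spirit of \cite[Proposition 11.1.2]{BerstelPerrinReutenauer2009}. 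You instead deduce the formula from the finite index basis property: Theorem~\ref{theoremFIB} applied to $S$ (for $X$ and $Z$) and to $T$ (for $Y$, which is legitimate since $T$ is a uniformly recurrent tree set containing $B$ by Theorem~\ref{theoremNormal} and $B\subset T$), together with the isomorphism $F_B\to\langle Z\rangle$ induced by $f$ (valid precisely because $Z$ is a basis of $\langle Z\rangle$) and multiplicativity of subgroup indices, so that $d_X(S)=[F_A:\langle X\rangle]=[F_A:\langle Z\rangle]\,[\langle Z\rangle:\langle X\rangle]=d_Z(S)\,d_Y(T)$. Both routes rest on Theorem~\ref{theoremNormal}; yours is shorter given the machinery already in place, while the paper's parse count is more elementary in that it uses only the recurrence of $T$ (not the finite index basis property for $T$) and mirrors the classical argument that holds for codes which need not generate free subgroups. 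You correctly flag the one point where the tree hypothesis is indispensable in your approach, namely that $\langle Z\rangle$ is free on $Z$, so there is no gap.
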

\begin{proof}
Assume first that $X$ is an $S$-maximal bifix code.
By Proposition~\ref{propositionMaxPref} (ii),  $Y$ is a
$T$-maximal prefix code and 
$Z$ is an $S$-maximal prefix code. This implies that  $Y$ is
a $T$-maximal bifix code and that $Z$ is an $S$-maximal
bifix code.

The converse also holds by Proposition~\ref{propositionMaxPref}.

To show Formula~\eqref{eqDegreesMult},
let us first observe that there exist words $w\in S$ such that for every
parse $(v,x,u)$ of $w$ with respect to $X$, the word $x$ is not a factor of $X$.
Indeed, let $n$ be the maximal length of the words of $X$.
Assume that the length of $w\in S$ is larger than
$3n$ . Then
if $(v,x,u)$ is a parse of $w$, we have $|u|,|v|<n$ and thus $|x|>n$.
This implies that $x$ is not a factor of $X$.

Next, we observe that by Theorem~\ref{theoremNormal}, the set
$T$ is a uniformly recurrent tree set and thus in particular, it is recurrent.

Let $w\in S$ be a word  with the above property. Let $\Pi_X(w)$ denote the set of parses of $w$ with
respect to $X$ and $\Pi_Z(w)$ the set of its parses with respect to
$Z$. We define a map $\varphi:\Pi_X(w)\rightarrow \Pi_Z(w)$
as follows. Let $\pi=(v,x,u)\in \Pi_X(w)$. Since $Z$ is a bifix code,
there is a unique way to
write $v=sy$ and $u=zr$
with $s\in A^*\setminus A^*Z$, $y,z\in Z^*$ and $r\in A^*\setminus
ZA^*$.
We set $\varphi(\pi)=(s,yxz,r)$. The triples $(y,x,z)$ are in
bijection
with the parses of $f^{-1}(yxz)$ with respect to $Y$. Since
$x$ is not a factor of $X$ by the hypothesis made on $w$, and since
$T$ is recurrent, there
are $d_Y(T)$ such triples. This shows Formula~\eqref{eqDegreesMult}.
\end{proof}

\begin{example}\label{exampleCodeGiuseppina}
Let $S$ be the Fibonacci set.
Let $B=\{u,v,w\}$ and $A=\{a,b\}$. Let $f:B^*\rightarrow A^*$
be the morphism defined by $f(u)=a$, $f(v)=baab$ and $f(w)=bab$.
Set $T=f^{-1}(S)$. The words of length at most $3$ of $T$ are
represented
on Figure~\ref{figureG}. 
\begin{figure}[hbt]
\centering
\gasset{AHnb=0,Nadjust=wh}
\begin{picture}(50,35)
\node(1)(0,10){}
\node(u)(15,25){}\node(v)(15,10){}\node(w)(15,0){}

\node(uu)(30,30){}\node(uv)(30,25){}\node(uw)(30,18){}
\node(vu)(30,10){}
\node(wu)(30,0){}

\node(uuv)(45,35){}\node(uuw)(45,30){}
\node(uvu)(45,25){}
\node(uwu)(45,18){}
\node(vuu)(45,12){}\node(vuv)(45,5){}
\node(wuu)(45,0){}

\drawedge(1,u){$u$}\drawedge(1,v){$v$}\drawedge(1,w){$w$}
\drawedge[ELpos=60](u,uu){$u$}\drawedge[ELpos=60](u,uv){$v$}\drawedge[ELpos=60](u,uw){$w$}
\drawedge(v,vu){$u$}
\drawedge(w,wu){$u$}

\drawedge(uu,uuv){$v$}\drawedge(uu,uuw){$w$}
\drawedge(uv,uvu){$u$}
\drawedge(uw,uwu){$u$}
\drawedge(vu,vuu){$u$}\drawedge(vu,vuv){$v$}

\drawedge(wu,wuu){$u$}
\end{picture}
\caption{The words of length at most $3$ in $T$.}\label{figureG}
\end{figure}
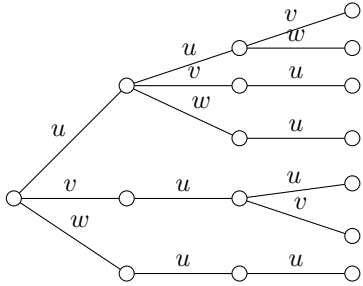

The set $Z=f(B)$ is an $S$-maximal bifix code of $S$-degree $2$
(it is the unique $S$-maximal bifix code of $S$-degree $2$
with kernel $\{a\}$).
Let $Y=\{uu,uvu,uw,v,wu\}$, which is
a $T$-maximal bifix code of $T$-degree $2$
(it is the unique $T$-maximal bifix code of $T$-degree $2$
with kernel $\{v\}$).

The code $X=f(Y)$ is the
  $S$-maximal bifix code of $S$-degree $4$ shown on
  Figure~\ref{figureCodeGiuseppina}. 
\begin{figure}[hbt]
\centering
\gasset{AHnb=0,Nadjust=wh}
\begin{picture}(60,35)(0,3)
\node(1)(0,20){}
\node(a)(10,30){}\node(b)(10,10){}
\node[Nmr=0](aa)(20,35){}\node(ab)(20,25){}\node(ba)(20,10){}
\node(aba)(30,25){}\node(baa)(30,15){}\node(bab)(30,5){}
\node(abaa)(40,30){}\node[Nmr=0](abab)(40,20){}
\node[Nmr=0](baab)(40,15){}\node[Nmr=0](baba)(40,5){}
\node(abaab)(50,30){}\node[Nmr=0](abaaba)(60,30){}
\drawedge(1,a){$a$}\drawedge[ELside=r](1,b){$b$}
\drawedge(a,aa){$a$}\drawedge(a,ab){$b$}\drawedge(b,ba){$a$}
\drawedge(ab,aba){$a$}\drawedge(ba,baa){$a$}\drawedge[ELside=r](ba,bab){$b$}
\drawedge(aba,abaa){$a$}\drawedge[ELside=r](aba,abab){$b$}
\drawedge[ELside=r](baa,baab){$b$}\drawedge(bab,baba){$a$}
\drawedge(abaa,abaab){$b$}\drawedge(abaab,abaaba){$a$}
\end{picture}
\caption{An $S$-maximal bifix code of $S$-degree 4.}\label{figureCodeGiuseppina}
\end{figure}
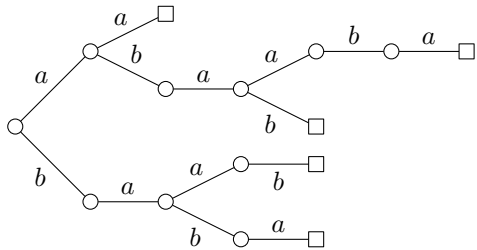

\end{example}

Example~\ref{exampleNotMult} shows that Formula~\eqref{eqDegreesMult} does
not hold if $S$ is not a tree set. 

\begin{example}\label{exampleNotMult}
Let $S=F(ab)^*$ (see Example~\ref{example511}). Let $Z=\{ab,ba\}$ and let $X=\{abab,ba\}$. We have
$X=Y\circ_f Z$ for $B=\{u,v\}$, $f:B^*\rightarrow A^*$ defined
by $f(u)=ab$ and $f(v)=ba$ with $Y=\{uu,v\}$. The codes $X$ and $Z$
are  $S$-maximal bifix codes and $d_Z(S)=2$. We have $d_X(S)=3$
since $abab$ has three parses. Thus $d_Z(S)$ does not
divide $d_X(S)$.
\end{example}
\bibliographystyle{plain}
\bibliography{bifixDecoding}
\end{document}